\documentclass[14pt]{amsart}
\usepackage{amscd}
\usepackage{amsmath}
\usepackage{amsfonts}
\usepackage{bm}
\usepackage{amsfonts,amsmath,amssymb,amscd,bbm,amsthm,mathrsfs,dsfont}
\usepackage{mathrsfs}
\usepackage{pb-diagram}
\usepackage{color}
\usepackage{amssymb}
\usepackage{xypic}
\usepackage[dvips]{graphicx}

\usepackage[all]{xy}
 \usepackage{CJK}

\newtheorem{Thm}{Theorem}[section]
\newtheorem{Fac}[Thm]{Fact}
\newtheorem{Lem}[Thm]{Lemma}
\newtheorem{Def}[Thm]{Definition}
\newtheorem{Cor}[Thm]{Corollary}
\newtheorem{Prop}[Thm]{Proposition}

\newtheorem{Rem}[Thm]{Remark}

\title[Structure of cluster algebras]
{ On Structure of cluster algebras of geometric type, II:\\ Green's equivalences and paunched surfaces}

\author{Min Huang $\;\;\;\;\;\;$ Fang Li}
\address{Min huang
\newline Department
of Mathematics, Zhejiang University (Yuquan Campus), Hangzhou, Zhejiang
310027,  P.R.China}
\email{minhuang1989@hotmail.com}

\address{Fang Li
\newline Department
of Mathematics, Zhejiang University (Yuquan Campus), Hangzhou, Zhejiang
310027, P.R.China}
\email{fangli@zju.edu.cn}

\date{version of \today}

\newcommand{\lra}{\longrightarrow}

\newcommand{\ra}{\rightarrow}
\newcommand{\sdp}{\times\kern-.2em\vrule height1.1ex depth-.05ex}
\newcommand{\epi}{\lra \kern-.8em\ra}

\setlength{\textwidth}{15.3cm} \setlength{\textheight}{23cm}
\setlength{\topmargin}{-0.0cm} \setlength{\oddsidemargin}{-1mm}
\setlength{\evensidemargin}{-1mm} \setlength{\abovedisplayskip}{3mm}
\setlength{\belowdisplayskip}{3mm}
\setlength{\abovedisplayshortskip}{0mm}
\setlength{\belowdisplayshortskip}{2mm} \normalbaselines
\raggedbottom

\begin{document}

\renewcommand{\thefootnote}{\alph{footnote}}
\setcounter{footnote}{-1} \footnote{\emph{ Mathematics Subject
Classification(2010)}:  13F60, 05E15, 20M10.}
\renewcommand{\thefootnote}{\alph{footnote}}
 \setcounter{footnote}{-1} \footnote{ \emph{Keywords}: seed homomorphism, rooted cluster morphism, sub-rooted cluster algebra, Green's equivalence, paunched surface}
 \setcounter{footnote}{-1} \footnote{ \emph{The corresponding author}: Fang Li}

\begin{abstract} Following our previous work \cite{HLY}, we introduce the notions of partial seed homomorphisms and partial ideal rooted cluster morphisms.  Related to the theory of Green's equivalences, the isomorphism classes of sub-rooted cluster algebras of a rooted cluster algebra are corresponded one-by-one to the regular $\mathcal D$-classes of the semigroup consisting of partial seed endomorphisms of the initial seed. Moreover, for a rooted cluster algebra from a Riemannian surface, they are also corresponded to the isomorphism classes of the so-called paunched surfaces.
\end{abstract}

\maketitle
\bigskip

\section{Introduction}

Cluster algebras are commutative algebras that were introduced
by Fomin and Zelevinsky \cite{fz1} in order to give a
combinatorial characterization of total positivity and canonical
bases in algebraic groups. The theory of cluster
algebras is related to numerous other fields. Since its introduction, the study on cluster algebras mainly involves intersection with Lie theory, representation theory of algebras, its combinatorial method (e.g. the periodicity issue) and categorification, Riemannian surfaces and its topological setting, including the Teichm$\ddot{u}$ller theory.

The algebraic structure and properties of cluster algebras were originally studied in a series of articles \cite{fz1,fz2,[1],fz4} involving bases and the positivity conjecture.

In our previous work \cite{HLY}, we focus to characterize the internal structure of rooted cluster algebras, including all sub-rooted cluster algebras and rooted cluster quotient algebras, via sub-seeds and seed homomorphisms.

In this work, as a subsequent of \cite{HLY}, we introduce the semigroup of partial seed
endomorphisms of the initial seed for a rooted cluster algebra and  partial ideal rooted cluster endomorphisms and
 find the connection of the internal structure of a
cluster algebra with the semigroup of  partial seed endomorphisms of the
initial seed via the method of Green's equivalences in semigroup theory. We build the correspondence
between the isomorphism classes of sub-rooted cluster algebras and
the regular $\mathcal D$-classes of the semigroup of  partial seed
endomorphisms of the initial seed for a rooted cluster algebra. This result supplies a window for
one to understand the internal structure of a rooted cluster algebra
through some algebraic properties in semigroup theory.

One of the main results is Theorem \ref{number}, for which we combine the method of mixing-type sub-seeds (Definition \ref{mixingseed}) and the Green's equivalences in the algebraic theory of semigroups as an original idea.

Another main result, Theorem \ref{final}, in fact, gives a new algebraic invariant of the internal structure of a Riemannian surface. As preparation,  we introduce the so-called paunched surfaces of a Riemannian surface (Definition \ref{paunch}) which are shown to correspond to sub-rooted cluster algebras of the rooted cluster algebra from the Riemannian surface, using of the theory from \cite{HLY}, and then using Theorem \ref{number}, to correspond to regular $\mathcal D$-classes of the semigroup consisting of partial seed homomorphisms (Definition \ref{partial seed}).

The organization of this article is the following. In Section 2,  we recall the definition of cluster algebras of geometric type. In Section 3,
 some required concepts and results are given from \cite{HLY}\cite{ADS}, and then introduce the semigroups of partial seed homomorphisms of the initial seed for a rooted cluster algebra and partial ideal rooted cluster endomorphisms (Definition \ref{partial}).

In Section 4, we introduce the theory of Green's equivalences in semigroup theory and then give some preliminaries on the semigroup of partial seed homomorphisms.

In Section 5, we give the proofs of the main conclusions, Theorem \ref{greenrelations}, Theorem \ref{greens} and Theorem \ref{number}, including three parts, i.e. the characterization of Green's equivalences in $\rm End_{par}(\Sigma)$, the continuation of Green's equivalences from $\rm End_{par}(\Sigma)$ to $\rm End_{par}(\mathcal A(\Sigma))$ and characterizing iso-classes of sub-rooted cluster algebras via regular $\mathcal D$-classes.

In Section 6, we firstly characterize sub-rooted cluster algebras of rooted cluster algebras from Riemannian
surfaces via paunched surfaces. Then, the proof of the main result, Theorem \ref{final}, is given.

Here we give the four main conclusions in this paper.

\begin{Thm}\label{greenrelations}
Let $f:\Sigma_{I_0,I_1}\rightarrow\Sigma$ and
$f':\Sigma_{I'_0,I'_1}\rightarrow\Sigma$ be two regular partial seed
homomorphisms in $\rm{End}_{par}(\Sigma)$, then

(1)~$f\mathcal{R}f'$ if and only if ~(a)~
$f(\Sigma_{I_0,I_1})=f'(\Sigma_{I'_0,I'_1})$.

(2)~ $f\mathcal{L}f'$ if and only if ~(b)~
$\Sigma_{I_0,I_1}=\Sigma_{I'_0,I'_1}$, and ~(c)~ there exists an
isomorphism $g: f(\Sigma_{I_0,I_1})\rightarrow
f'(\Sigma_{I'_0,I'_1})$ such that $f'=(id_{I''_0,I''_1}g)\circ f$, where $f'(\Sigma_{I'_0,I'_1})=\Sigma_{I''_0,I''_1}$.

(3)~ $f\mathcal{H}f'$ if and only if the above statements (a), (b) and (c) hold.

(4)~ $f\mathcal{D}f'$ if and only if $f(\Sigma_{I_0,I_1})\cong
f'(\Sigma_{I'_0,I'_1})$.
\end{Thm}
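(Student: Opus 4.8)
The plan is to read each of the four statements through the ideal-theoretic definitions of Green's relations and to reduce everything to two invariants of a regular partial seed homomorphism $f$: its domain $\Sigma_{I_0,I_1}$ and its image $f(\Sigma_{I_0,I_1})$. The product in $\mathrm{End}_{par}(\Sigma)$ is ordinary composition of partial maps, so right multiplication pre-composes and left multiplication post-composes; under this convention the four assertions are exactly the seed-theoretic analogues of the classical descriptions of Green's relations on a semigroup of partial transformations, and the regularity hypothesis is precisely what supplies the partial inverses needed in the converse directions. In each part the forward implication is formal semigroup bookkeeping, while the converse requires constructing a connecting element and checking that it again satisfies Definition \ref{partial seed}.

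For part (1), the implication $f\mathcal{R}f'\Rightarrow(a)$ is immediate: from $f\,\mathrm{End}_{par}(\Sigma)^1=f'\,\mathrm{End}_{par}(\Sigma)^1$ one gets $f'=f\circ h$ and $f=f'\circ h'$, whence $f'(\Sigma_{I'_0,I'_1})\subseteq f(\Sigma_{I_0,I_1})$ and the reverse inclusion, giving equal images. For the converse I would invoke regularity: choosing $\bar f$ with $f\bar f f=f$, the composite $f\bar f$ restricts to the identity on $f(\Sigma_{I_0,I_1})=f'(\Sigma_{I'_0,I'_1})$, so $h:=\bar f\circ f'$ satisfies $f\circ h=f\bar f\circ f'=f'$, and symmetrically one builds $h'$ with $f=f'\circ h'$. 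Part (2) is dual. From $f\mathcal{L}f'$ one obtains $f'=h\circ f$ and $f=h'\circ f'$; comparing domains of partial maps forces $\Sigma_{I_0,I_1}=\Sigma_{I'_0,I'_1}$, which is (b), and restricting $h$ to $f(\Sigma_{I_0,I_1})$ yields the isomorphism $g$ onto $f'(\Sigma_{I'_0,I'_1})$ with $f'=(\mathrm{id}_{I''_0,I''_1}g)\circ f$, which is (c). Conversely, given (b) and (c), the element $\mathrm{id}_{I''_0,I''_1}g$ witnesses $f'\in\mathrm{End}_{par}(\Sigma)^1 f$, while $\mathrm{id}\,g^{-1}$ (available since $g$ is an isomorphism) witnesses $f\in\mathrm{End}_{par}(\Sigma)^1 f'$, so $f\mathcal{L}f'$. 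Part (3) is then immediate from $\mathcal{H}=\mathcal{R}\cap\mathcal{L}$ together with (1) and (2).

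For part (4) I would use $\mathcal{D}=\mathcal{R}\circ\mathcal{L}$. If $f\mathcal{D}f'$, pick $k$ with $f\mathcal{R}k\mathcal{L}f'$; then (a) gives $f(\Sigma_{I_0,I_1})=k(\mathrm{dom}\,k)$ and (c) gives $k(\mathrm{dom}\,k)\cong f'(\Sigma_{I'_0,I'_1})$, so the images are isomorphic. Conversely, given an isomorphism $\phi:f(\Sigma_{I_0,I_1})\to f'(\Sigma_{I'_0,I'_1})$, I would form the bridge $k:=\phi^{-1}\circ f'$, which by construction has domain $\Sigma_{I'_0,I'_1}$ and image $f(\Sigma_{I_0,I_1})$; then $k\mathcal{R}f$ by (1) and $k\mathcal{L}f'$ by (2), the witnessing isomorphism in (c) being $\phi$ since $(\mathrm{id}\,\phi)\circ k=f'$, and therefore $f\mathcal{D}f'$.

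I expect the main obstacle to lie not in the semigroup-theoretic skeleton above but in the verification, at each construction, that the connecting maps $h,h',g,\mathrm{id}_{I''_0,I''_1}g$ and especially the bridge $k$ are genuine \emph{regular} partial seed homomorphisms: one must confirm that they carry cluster variables to cluster variables compatibly with the exchange matrices and respect the frozen/exchangeable split encoded by the index pairs $(I_0,I_1)$ of the mixing-type sub-seeds of Definition \ref{mixingseed}, i.e. that they satisfy Definition \ref{partial seed}. Regularity of $f$ and $f'$ is the essential leverage here, since it furnishes the partial inverses $\bar f$ and $\phi^{-1}$ from which the connecting maps are assembled and guarantees that the resulting elements again lie in a regular $\mathcal{D}$-class.
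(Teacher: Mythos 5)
Your skeleton is the right one, and it is close in spirit to the paper's: both reduce $\mathcal{R}$ to equality of image seeds, $\mathcal{L}$ to equality of domain seeds plus an isomorphism of images, and $\mathcal{D}$ to an isomorphism of images, with regularity supplying the converses. The genuine difference is that the paper never manipulates a pseudo-inverse $\bar f$ directly; it routes every converse through the canonical idempotents $id_{J_0,J_1}$ sitting over the image seeds, via Lemma \ref{Rclasswithidentity} (every regular $\mathcal{R}$-class contains an $id_{I_0,I_1}$, necessarily with $\Sigma_{I_0,I_1}$ equal to the common image) and Lemma \ref{Rclass}, whereas you build the connecting elements by hand as $h=\bar f\circ f'$ and $k=\phi^{-1}\circ f'$. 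Your route is more direct, but it concentrates all the difficulty in a single point that you flag and then do not resolve, and that point is exactly where the argument can fail.

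Concretely: in $\rm{End}_{par}(\Sigma)$ the domain of a composite is cut down by the exchangeable/frozen typing in (\ref{ex-f}), so the identity $f\circ(\bar f\circ f')=f'$ is an equality of partial seed homomorphisms only if $Dom\bigl((f\circ\bar f)\circ f'\bigr)_{fr}=Dom(f')_{fr}$. If $f'$ sends some frozen $x$ and some exchangeable $y$ to the same variable, then $f'(x)$ lies in the exchangeable part $f(X)$ of the common image seed (Definition \ref{imageseed}), hence $f'(x)\in Dom(f\circ\bar f)_{ex}$ and $x$ drops out of the domain of your composite, so $f\circ h\neq f'$ even though the two maps agree pointwise where both are defined. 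The same issue threatens your bridge $k=\phi^{-1}\circ f'$ in part (4). Ruling this out is precisely condition (c) of Lemma \ref{Rclass}, and proving that every \emph{regular} element satisfies it is the actual content of the paper's Section 4: one first shows by a cardinality argument that an idempotent in a regular $\mathcal{R}$-class fixes its image pointwise and hence has the form $id_{I_0,I_1}$ on its image (Lemma \ref{Rclasswithidentity}), and then deduces (c) from $f\mathcal{R}id_{I_0,I_1}$. Your proposal asserts that ``regularity is the essential leverage'' but contains no argument deriving this type-preservation from $f\bar f f=f$; until that is supplied, the ``if'' directions of (1) and (4) are not proved. The rest (the ``only if'' directions via Fact \ref{dom}/Lemma \ref{from2.22}, part (2) in both directions, and part (3)) matches the paper and is sound.
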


\begin{Thm}\label{greens}
Let $f:\mathcal{A}(\Sigma_{I_0,I_1})\rightarrow\mathcal{A}(\Sigma)$
and
$f':\mathcal{A}(\Sigma_{I'_0,I'_1})\rightarrow\mathcal{A}(\Sigma)$
be noncontractible ideal partial rooted cluster morphisms in
$\rm{End}_{par}(\mathcal{A}(\Sigma))$ with restricted partial seed
homomorphisms $f^S, f'^S\in\rm{End}_{par}(\Sigma)$, respectively.
Assume that $f^S$ and $f'^S$ are regular. Then, $f^S\mathcal{F}f'^S$ in
$\rm{End}_{par}(\Sigma)$ if and only if $f\mathcal{F}f'$ in
$\rm{End}_{par}(\mathcal{A}(\Sigma))$ with $\mathcal F$ one of
the Green's equivalences $\mathcal R, \mathcal L, \mathcal H,$ or $\mathcal
D$.
\end{Thm}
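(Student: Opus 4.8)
The plan is to transport the combinatorial description of Green's relations in $\mathrm{End}_{par}(\Sigma)$ obtained in Theorem \ref{greenrelations} up to $\mathrm{End}_{par}(\mathcal A(\Sigma))$ through the assignment $f\mapsto f^S$. Before treating the individual relations I would isolate two structural facts. First, restriction respects composition: whenever $f\circ g$ is defined one has $(f\circ g)^S=f^S\circ g^S$, and the identity morphism restricts to the identity seed homomorphism, so $f\mapsto f^S$ is a morphism of the underlying monoids (with identity adjoined). Second, for noncontractible ideal partial rooted cluster morphisms the restriction is faithful and compatible with images, namely $f(\mathcal A(\Sigma_{I_0,I_1}))=\mathcal A\big(f^S(\Sigma_{I_0,I_1})\big)$ and two such morphisms agreeing after restriction coincide. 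Both facts should follow from the definitions of Section 3 and the results of \cite{HLY}; they are what let me pass between principal one-sided ideals upstairs and downstairs.

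With these in hand the relation $\mathcal R$ is the model case. The forward implication uses only the first fact: if $f=f'\circ a$ and $f'=f\circ b$ for $a,b$ in $\mathrm{End}_{par}(\mathcal A(\Sigma))$ (or the adjoined identity), restriction gives $f^S=f'^S\circ a^S$ and $f'^S=f^S\circ b^S$, whence $f^S\,\mathcal R\,f'^S$. For the converse I would invoke regularity: since $f^S$ and $f'^S$ are regular and $\mathcal R$-related, I can write $f^S=f'^S\circ a_0$ and $f'^S=f^S\circ b_0$ with $a_0,b_0\in\mathrm{End}_{par}(\Sigma)$, lift $a_0,b_0$ to noncontractible ideal partial rooted cluster morphisms $a,b$, and then note $(f'\circ a)^S=f'^S\circ a_0=f^S$. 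By the faithfulness in the second fact this forces $f'\circ a=f$, and the symmetric computation gives $f\circ b=f'$, so $f\,\mathcal R\,f'$.

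For $\mathcal L$ the argument is dual, but the witnesses now occur as left factors and one must carry the isomorphism datum (c) of Theorem \ref{greenrelations}: I would lift the seed isomorphism $g: f(\Sigma_{I_0,I_1})\rightarrow f'(\Sigma_{I'_0,I'_1})$ to a rooted cluster isomorphism of the corresponding image algebras and verify, again through the second fact, that the resulting left multiplication reproduces $f$ and $f'$ on the nose rather than merely after restriction. Since $\mathcal H=\mathcal R\cap\mathcal L$, the $\mathcal H$ statement is then immediate. For $\mathcal D$ I would use $\mathcal D=\mathcal R\circ\mathcal L$: an intermediate seed homomorphism $h^S$ with $f^S\,\mathcal R\,h^S\,\mathcal L\,f'^S$ lifts to a morphism $h$ with $f\,\mathcal R\,h\,\mathcal L\,f'$ by the cases just proved, and conversely restriction of such an intermediate $h$ yields the seed-level $\mathcal D$-chain; alternatively one reads both sides off the image characterizations, comparing $f(\mathcal A(\Sigma_{I_0,I_1}))\cong f'(\mathcal A(\Sigma_{I'_0,I'_1}))$ with $f(\Sigma_{I_0,I_1})\cong f'(\Sigma_{I'_0,I'_1})$ via the image identity of the second fact.

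I expect the genuine difficulty to be the backward (lifting) direction, and within it the $\mathcal L$ case. The issue is to produce factors at the cluster-algebra level that equal $f$ and $f'$ exactly, not just up to restriction, and to guarantee that the lifted compositions remain noncontractible and ideal so that the second structural fact applies. This is precisely where regularity is needed, to ensure the multipliers $a,b$ (and the idempotents witnessing the relations) live inside the semigroup rather than only in the adjoined monoid, and where the faithfulness of restriction on noncontractible ideal morphisms does the real work; checking that the lifted isomorphism intertwines $f$ and $f'$ is the delicate verification.
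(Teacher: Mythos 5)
There is a genuine gap in the lifting direction (from $\rm End_{par}(\Sigma)$ up to $\rm End_{par}(\mathcal A(\Sigma))$), and it sits in the $\mathcal R$ case rather than in the $\mathcal L$ case you single out as delicate. Your step ``lift $a_0,b_0$ to noncontractible ideal partial rooted cluster morphisms $a,b$'' is exactly what cannot be taken for granted: a partial seed homomorphism does not in general underlie any rooted cluster morphism. By Theorem \ref{rooted cluster subalgebra}, an injective seed homomorphism onto a mixing-type sub-seed induces a rooted cluster morphism only when the compatibility condition $b_{xy}=0$ (for $x$ exchangeable outside $I_0\cup I_1$ and $y\in I_1$) holds, and for the witness actually needed here --- a section $g^{S'}$ of the retraction $f_1^S:\Sigma_{I_0,I_1}\to\Sigma_{J_0,J_1}$ furnished by regularity and Lemma \ref{Rclass} --- that condition is a nontrivial claim. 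The paper's Lemma \ref{retraction} establishes it by comparing exponents on both sides of $f(\mu_y(y))=\mu_{f(y)}(f(y))$ (axiom CM3), obtaining the inequality (\ref{great}), and combining it with the equality (\ref{less}) coming from $f_1^Sg^{S'}=id_{\Sigma_{J_0,J_1}}$ to force $b_{yx}=0$; only then does one get the injective rooted cluster morphism $g$ with $f\circ_\Lambda g=id_{\mathcal A(\Sigma_{J_0,J_1})}$. That computation is the mathematical core of the theorem and is entirely absent from your proposal. The $\mathcal L$ lifting is comparatively easy precisely because its witness is a seed isomorphism (condition (c) of Theorem \ref{greenrelations}(2)), and seed isomorphisms always lift to rooted cluster isomorphisms by Proposition \ref{basiclem}.

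Two smaller problems. First, your ``structural fact'' $(f\circ g)^S=f^S\circ g^S$ ignores that the multiplication in $\rm End_{par}(\mathcal A(\Sigma))$ is $\circ_\Lambda$, which inserts a connecting morphism $h^{J_0,J_1}_{I'_0,I'_1}$ that is the trivial (contractible) map whenever the image algebra of the first factor fails to be a rooted cluster subalgebra of the domain of the second; the paper therefore restricts the products $hg_1$ as a whole (using noncontractibility of $f,f'$ together with Lemma \ref{gluingfact}) rather than the individual factors. Second, your treatment of $\mathcal D$ via an intermediate $h^S$ with $f^S\,\mathcal R\,h^S\,\mathcal L\,f'^S$ runs into the same lifting obstruction, since such an intermediate seed-level element need not be the restriction of anything in $\rm End_{par}(\mathcal A(\Sigma))$; the paper instead deduces $\mathcal H$ and $\mathcal D$ formally from the already-established $\mathcal R$ and $\mathcal L$ equivalences, anchored at the idempotents $id_{\mathcal A(\Sigma_{J_0,J_1})}$.
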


\begin{Thm}\label{number}  For a rooted cluster algebra $\mathcal{A}(\Sigma)$, let $\mathcal{A}(\Sigma')$ denote any of its sub-rooted cluster algebra and $[\mathcal{A}(\Sigma')]$ the iso-class of $\mathcal{A}(\Sigma')$ in {\bf Clus}. There exists a one-to-one correspondence between the iso-classes of sub-rooted cluster
algebras of $\mathcal{A}(\Sigma)$ and the regular
$\mathcal{D}$-classes in $\rm{End_{par}}(\Sigma)$ through the
bijection $\varphi: [\mathcal{A}(\Sigma')]\longrightarrow
D_{id_{I_0,I_1}}$ with $\Sigma'\cong \Sigma_{I_0,I_1}$ for some
$I_0\subseteq X$ and $I_1\subseteq \widetilde{X}$.
\end{Thm}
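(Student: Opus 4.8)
The plan is to check directly that $\varphi$ is a well-defined bijection onto the set of regular $\mathcal{D}$-classes, reducing every assertion to Theorem \ref{greenrelations}(4) together with the correspondence, established in \cite{HLY}, between isomorphisms in \textbf{Clus} and isomorphisms of initial seeds. First I would record the three ingredients on which everything rests. (i) By \cite{HLY}, every sub-rooted cluster algebra of $\mathcal{A}(\Sigma)$ has the form $\mathcal{A}(\Sigma_{I_0,I_1})$ for a mixing-type sub-seed $\Sigma_{I_0,I_1}$ (Definition \ref{mixingseed}), and $\mathcal{A}(\Sigma_{I_0,I_1})\cong\mathcal{A}(\Sigma_{I'_0,I'_1})$ in \textbf{Clus} if and only if $\Sigma_{I_0,I_1}\cong\Sigma_{I'_0,I'_1}$ as seeds; this guarantees both that the domain of $\varphi$ is parametrized by sub-seeds and that \textbf{Clus}-isomorphism is detected on seeds. (ii) Each $id_{I_0,I_1}$ is an idempotent of $\mathrm{End}_{par}(\Sigma)$, hence a regular element, so $D_{id_{I_0,I_1}}$ is a regular $\mathcal{D}$-class and $\varphi$ indeed takes values among the regular $\mathcal{D}$-classes. (iii) The standard semigroup fact that a regular $\mathcal{D}$-class contains an idempotent.

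For well-definedness and injectivity at once, I would establish the chain of equivalences
\begin{align*}
\mathcal{A}(\Sigma_{I_0,I_1})\cong\mathcal{A}(\Sigma_{I'_0,I'_1})
&\iff \Sigma_{I_0,I_1}\cong\Sigma_{I'_0,I'_1}\\
&\iff id_{I_0,I_1}\,\mathcal{D}\,id_{I'_0,I'_1}\\
&\iff D_{id_{I_0,I_1}}=D_{id_{I'_0,I'_1}},
\end{align*}
in which the first step is ingredient (i), the second is Theorem \ref{greenrelations}(4) applied to the identities (whose images are $id_{I_0,I_1}(\Sigma_{I_0,I_1})=\Sigma_{I_0,I_1}$ and $id_{I'_0,I'_1}(\Sigma_{I'_0,I'_1})=\Sigma_{I'_0,I'_1}$), and the third is the definition of a $\mathcal{D}$-class. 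Reading the chain from left to right yields that $\varphi$ is independent of the representative $\mathcal{A}(\Sigma')$ and of the chosen sub-seed $\Sigma_{I_0,I_1}\cong\Sigma'$, i.e. well-definedness; reading it from right to left yields injectivity.

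For surjectivity, given a regular $\mathcal{D}$-class $D$, I would pick an idempotent $e\in D$ by (iii). Since $e$ is a partial seed endomorphism with domain some sub-seed $\Sigma_{J_0,J_1}$, its image $e(\Sigma_{J_0,J_1})$ is again a mixing-type sub-seed of $\Sigma$, say $\Sigma_{I_0,I_1}$. Applying Theorem \ref{greenrelations}(4) to $e$ and $id_{I_0,I_1}$ gives $e\,\mathcal{D}\,id_{I_0,I_1}$, so $D=D_{id_{I_0,I_1}}=\varphi\big([\mathcal{A}(\Sigma_{I_0,I_1})]\big)$. Hence every regular $\mathcal{D}$-class lies in the image of $\varphi$, and combined with the previous paragraph $\varphi$ is a bijection.

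The hard part will not be the assembly above but securing the two auxiliary points that feed into it. First, one must verify that the image of a regular (in particular idempotent) partial seed endomorphism is a genuine mixing-type sub-seed of $\Sigma$ in the sense of Definition \ref{mixingseed}, rather than merely an abstract seed isomorphic to one; this is precisely where regularity, and not arbitrary membership in $\mathrm{End}_{par}(\Sigma)$, is used, and it is what makes the surjectivity step legitimate. Second, the rigidity statement in (i), that \textbf{Clus}-isomorphism of sub-rooted cluster algebras forces seed isomorphism of the corresponding sub-seeds, is the delicate direction, since a priori a rooted cluster algebra need not determine its initial seed. Once both are drawn from \cite{HLY}, the bijectivity of $\varphi$ follows by combining Theorem \ref{greenrelations}(4) with the elementary theory of regular $\mathcal{D}$-classes.
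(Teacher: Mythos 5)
Your proposal is correct and follows essentially the same route as the paper: well-definedness and injectivity come from the chain $\mathcal{A}(\Sigma_{I_0,I_1})\cong\mathcal{A}(\Sigma_{I'_0,I'_1})\iff\Sigma_{I_0,I_1}\cong\Sigma_{I'_0,I'_1}\iff id_{I_0,I_1}\,\mathcal{D}\,id_{I'_0,I'_1}$ (the paper cites Proposition \ref{basiclem} and Lemma \ref{idem}(2), which is exactly the content of Theorem \ref{greenrelations}(4) that you invoke), and surjectivity comes from the fact that a regular $\mathcal{D}$-class contains an idempotent and hence some $id_{I_0,I_1}$ (the paper's Corollary \ref{id}, whose proof is precisely your argument that the image seed of an idempotent is a mixing-type sub-seed). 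The two auxiliary points you flag as the "hard part" are indeed the ones the paper outsources to Lemma \ref{Rclasswithidentity} and Proposition \ref{basiclem}, so no gap remains.
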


 \begin{Thm}\label{final}
                             For a Riemannian surface $(S,M)$ with triangulation $T$ without punctures,

(a)\; there is a one-to-one correspondence between the iso-classes of $(I_0,I_1)$-paunched surfaces of $(S,M,T,\mathcal L)$ and the iso-classes of sub-rooted cluster algebras of $\mathcal A(\Sigma(S,M,T,\mathcal L))$;

(b)\; there is a one-to-one correspondence between the iso-classes of $(I_0,I_1)$-paunched surfaces of $(S,M,T,\mathcal L)$ and the set of regular $\mathcal D$-classes of $\rm End_{par}(\Sigma(S,M,T,\mathcal L))$.
\end{Thm}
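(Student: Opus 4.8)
The plan is to prove part (a) directly by translating the topological data of a paunched surface into the combinatorial data of a sub-seed, and then to obtain part (b) for free by composing with Theorem~\ref{number}.

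First, for part (a), I would construct a map $\Phi$ from iso-classes of $(I_0,I_1)$-paunched surfaces of $(S,M,T,\mathcal{L})$ to iso-classes of sub-rooted cluster algebras. Given such a paunched surface, I would read off from its induced triangulation the associated seed in the manner of Fomin--Shapiro--Thurston: the isotopy classes of interior arcs furnish the exchangeable cluster variables, the boundary segments furnish the frozen variables, and the signed adjacencies of arcs furnish the exchange matrix. The content of Definition~\ref{paunch} should be arranged precisely so that this seed is isomorphic to the sub-seed $\Sigma_{I_0,I_1}$, where the indices $I_0\subseteq X$ prescribe which interior arcs have been excised to form the ``paunch'' and $I_1\subseteq\widetilde{X}$ prescribes the corresponding restriction of the extended cluster. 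Setting $\Phi([\text{paunched surface}])=[\mathcal{A}(\Sigma_{I_0,I_1})]$ then lands, by the classification of sub-rooted cluster algebras in \cite{HLY}, in the iso-classes of sub-rooted cluster algebras of $\mathcal{A}(\Sigma(S,M,T,\mathcal{L}))$.

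Next I would check that $\Phi$ is a well-defined bijection. Well-definedness amounts to verifying that a homeomorphism of paunched surfaces respecting marked points, boundary, and the paunch structure carries one induced triangulation to another differing only by relabelling, hence induces a seed isomorphism; this is the functoriality of the surface-to-seed dictionary. For surjectivity, given any sub-seed $\Sigma_{I_0,I_1}$ I would exhibit a paunched surface realizing it: starting from $(S,M,T)$, the prescribed operation of Definition~\ref{paunch} (removing a neighbourhood of, or cutting open along, the arcs indexed by $I_0$, with the boundary data adjusted by $I_1$) produces a surface whose triangulation yields exactly that sub-seed. For injectivity I would argue that the sub-seed, together with its exchange matrix, determines the paunched surface up to the prescribed isomorphism; here the hypothesis \emph{without punctures} is essential, since in the unpunctured case the triangulated surface is reconstructed from its arc-adjacency data without the ambiguities that tagged arcs would introduce.

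The main obstacle I expect is precisely this injectivity step, namely the faithful reconstruction of the paunched surface from its sub-rooted cluster algebra. A priori two different excisions could yield the same quiver, so I must match the geometric isomorphism type of the paunched surface to the \emph{cluster-algebra} isomorphism type rather than merely to the underlying quiver; this is where the rigidity of the unpunctured surface model and the structural results of \cite{HLY} on sub-seeds and seed homomorphisms must be combined with care. Once part (a) is established, part (b) follows immediately: composing the bijection of part (a) with the bijection of Theorem~\ref{number} between iso-classes of sub-rooted cluster algebras of $\mathcal{A}(\Sigma(S,M,T,\mathcal{L}))$ and regular $\mathcal{D}$-classes of $\rm{End}_{par}(\Sigma(S,M,T,\mathcal{L}))$ gives the desired correspondence, and one checks that the composite of two bijections is again a bijection.
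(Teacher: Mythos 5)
Your proposal follows essentially the same route as the paper: Theorem~\ref{sur} supplies the dictionary between $(I_0,I_1)$-paunched surfaces and the sub-seeds $\Sigma_{I_0,I_1}$, and the injectivity step you single out as the main obstacle is resolved exactly as you anticipate, by the unpunctured-surface reconstruction result quoted as Lemma~\ref{isom} (Proposition 14.1 of \cite{fosth}), which recovers $(S_{J},M,T_{J},\mathcal L_1)$ up to isomorphism from its quiver together with the shear-coordinate data of the laminations. Part (b) is then obtained, as you propose, by composing with the bijection of Theorem~\ref{number}.
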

\bigskip

\section{On the definition of cluster algebras of geometric type}
The original definition of cluster algebra given in \cite{fz1} is in terms of exchange pattern. We recall the equivalent definition in terms of seed mutation in \cite{fz2}; for more details, refer to \cite{GSV, fz1, fz2}.

Let $(\mathds P,\oplus,\cdot)$ be a {\bf semi-field}, i.e., an abelian
multiplicative group, supplied with an {\bf auxiliary addition $\oplus$}, which is commutative,
associative, and distributive with respect to the multiplication in $\mathds P$. In particular, let $\mathds P$ be a free abelian multiplicative group with a finite set of generators $p_j (j\in J)$, and with an addition $\oplus$ given by \begin{equation}\label{grouprelation}
\prod\limits_{j\in J}p_j^{a_j}\oplus \prod\limits_{j\in J}p_j^{b_j}=\prod\limits_{j\in J}p_j^{min \{a_j,b_j\}}\end{equation}
 for any $a_j, b_j\in \mathds{Z}$.
 In this case, the semi-field $\mathds P$ is called as {\bf tropical semi-field}, denoted by $Trop(p_j,j\in J)$, which is related to tropical geometry (refer to \cite{fz2} and \cite{BS}).

For a semi-field $\mathds P$, it was proved in \cite{fz1} that the group ring $\mathds{ZP}$ is a domain. Let $\mathds F$ (the {\bf ambient field} of cluster algebra) be the fraction field of the polynomial $\mathds {ZP}[u_1,\cdots,u_n]$ in $n$ independent variables $\{u_1,\cdots,u_n\}$ ($n$ is the {\bf rank} of cluster algebra). In particular, when $\mathds P=Trop(p_j,j\in J)$, we have $\mathds F\cong \mathds Q(u_i, p_j)$  $(i=1,\cdots, n, j\in J)$.

An $n\times n$ integer matrix $A=(a_{ij})$ is called {\bf
sign-skew-symmetric} if either $a_{ij}=a_{ji}=0$ or $a_{ij}a_{ji}<0$ for any $1\leq i,j\leq
n$.

An $n\times n$ integer matrix $A=(a_{ij})$ is called {\bf
skew-symmetric} if $a_{ij}=-a_{ji}$ for all $1\leq i,j\leq n$.

An $n\times n$ integer matrix $A=(a_{ij})$ is called {\bf
$D$-skew-symmetrizable} if $d_ia_{ij}=-d_ja_{ji}$ for all $1\leq
i,j\leq n$, where $D$=diag$(d_i)$ is a diagonal matrix with all
$d_i\in \mathbb{Z}_{\geq 1}$.

Let $\widetilde{A}$ be an $n\times (n+m)$ integer matrix whose
principal part, denoted as $A$, is the $n\times n$ submatrix formed
by the first $n$ rows and the first $n$ columns. The entries of
$\widetilde A$ are written by $a_{xy}$, $x\in X$ and $y\in
\widetilde{X}$. We say $\widetilde A$ to be {\bf
sign-skew-symmetric} (respectively, {\bf skew-symmetric}, $D$-{\bf
skew-symmetrizable}) whenever $A$ possesses this property.

For two $n\times (n+m)$ integer
 matrices $A=(a_{ij})$ and
$A'=(a'_{ij})$, we say that $A'$ is
obtained from $A$ by a {\bf matrix mutation $\mu_i$} in direction $i, 1\leq i\leq n$, represented as $A'=\mu_i(A)$, if
\begin{equation}\label{matrixmutation}
a'_{jk}=\left\{\begin{array}{lll} -a_{jk},& \text{if}
~~j=i \ \text{or}\;
k=i;\\a_{jk}+\frac{|a_{ji}|a_{ik}+a_{ji}|a_{ik}|}{2},&
\text{otherwise}.\end{array}\right.\end{equation}

It is easy to verify that $\mu_i(\mu_i(A))=A$. The
skew-symmetric/symmetrizable property of matrices is invariant under
mutations. However, the sign-skew-symmetric property is not so. For
this reason, a sign-skew-symmetric matrix $A$ is called {\bf totally
sign-skew-symmetric} if any matrix, that is mutation equivalent to
$A$, is sign-skew-symmetric.

A {\bf seed} in $\mathds F$ is a triple $\Sigma=(X, P, B)$, where\\
(a)~$X=\{x_1,\cdots x_n\}$ is a transcendence basis of $\mathds F$ over the fraction field of $\mathds{ZP}$, which is called a {\bf cluster}, whose each $x\in X$ is called a {\bf cluster variable} (see \cite{fz2});\\
(b)~$P=(p^{\pm }_x)_{x\in X}$ is a $2n$-tuple of elements of $\mathds P$ satisfying the {\bf normalization condition} $p^{+}_x\oplus p^{-}_x=1$ for all $x\in X$, which is called the {\bf coefficient tuple} (see \cite{fz4});\\
(c)~$B=(b_{xy})_{x,y\in X}$ is an $n\times n$ integer matrix with rows and columns indexed
by $X$, which is totally sign-skew-symmetric.

Let $\Sigma=(X,P,B)$ be a seed in $\mathds F$ with $x\in X$, the {\bf mutation} $\mu_x$ of $\Sigma$ at $x$ is defined satisfying $\mu_x(\Sigma)=(\mu_x(X), \bar P, \mu_{x}(B))$ such that

(a)~ The {\bf adjacent cluster} $\mu_{x}(X)=\{\mu_{x}(y)\mid y\in X\} $, where  $\mu_{x}(y)$ is given by the {\bf exchange relation}
\begin{equation}
\mu_{x}(y)=\left\{\begin{array}{lll} \frac{p^{+}_x\prod\limits_{t\in X, b_{xt}>0}t^{b_{xt}}+p^{-}_x\prod\limits_{t\in X, b_{xt}<0}t^{-b_{xt}}}{x},& \text{if}
~~y=x;\\y,& \text{if}~~ y\neq x. \end{array}\right.\end{equation}
This new variable $\mu_{x}(x)$ is also called as {\em new} {\bf cluster
variable}.

(b)~  the $2n$-tuple $\bar P=\{\bar p^{\pm}_y\}_{y\in \mu_x(X)}$ is uniquely determined by the normalization conditions $\bar p^{+}_{y}\oplus \bar p^{-}_{y}=1$ together with
\begin{equation}
\bar p^{+}_y/\bar p^{-}_y=\left\{\begin{array}{lll}  p^{-}_x/ p^{+}_x ,& \text{if}
~~y=\mu_x(x); \\(p^{+}_x)^{b_{xy}}p^{+}_y/ p^{-}_y,&\text{if}~~ b_{xy}\geq 0;
 \\(p^{-}_x)^{b_{xy}}p^{+}_y/ p^{-}_y,&\text{if}~~ b_{xy}\leq 0. \end{array}\right.\end{equation}

(c)~ $\mu_{x}(B)$ is obtained from B by applying the matrix mutation in direction
$x$ and then relabeling one row and one column by replacing $x$ with $\mu_x(x)$.

It is easy to see that the mutation $\mu_x$ is an involution, i.e., $\mu_{\mu_x(x)}(\mu_x(\Sigma))=\Sigma$.

Two seeds $\Sigma'$ and $\Sigma''$ in $\mathds F$ are called {\bf mutation equivalent} if there exists a series of mutations $\mu_{y_1},\cdots,\mu_{y_s}$ such that $\Sigma''=\mu_{y_s}\cdots\mu_{y_1}(\Sigma')$.

 Trivially, mutation equivalence of seeds gives an equivalent relation on the set of seeds in $\mathds F$.

Let $\Sigma$ be a seed in $\mathds F$. Denote by $\mathcal S$ the set of all seeds mutation equivalent to $\Sigma$. In particular, $\Sigma\in \mathcal S$. For any $\bar \Sigma\in\mathcal S$, we have $\bar \Sigma=(\bar X,\bar P,\bar B)$. Denote $\mathcal P=\cup_{\bar\Sigma\in\mathcal S}\bar P \subseteq \mathds P$ and $\mathcal X=\cup_{\bar\Sigma\in\mathcal S}\bar X$. $\mathds Z[\mathcal P]$ represents the sub-ring of $\mathds {ZP}$ generated by $\mathcal P$.

\begin{Def}\label{clusteralgebra} \cite{fz1}\cite{fz2}
 Let $\Sigma$ be a seed in $\mathds F$.
 \\
 (i)~ The {\bf cluster algebra}, denoted by $\mathcal A=\mathcal A(\Sigma)$, associated with $\Sigma$ is defined to be the $\mathds Z[\mathcal P]$-subalgebra of $\mathds F$ generated by $\mathcal X$. $\Sigma$ is called the {\bf initial seed} of $\mathcal A$.
 \\
 (ii)~ In particular, if the semi-field $\mathds P$ is tropical, i.e. $\mathds P=Trop(x_{n+1},\cdots,x_{n+m})$ for some nonnegative integer $m$, the cluster algebra $\mathcal A$ is said to be of {\bf geometric type}.
\end{Def}

In case $\mathds P=Trop(x_{n+1},\cdots,x_{n+m})$, for any seed $\Sigma=(X, P, B)$ in $\mathds F$,
$P=(p^{\pm }_x)_{x\in X}$ is a $2n$-tuple of elements of $\mathds P$ satisfying the  normalization condition $p^{+}_x\oplus p^{-}_x=1$ for all $x\in X$. Since $\mathds P$ is a free abelian group generated by $x_{n+1},\cdots,x_{n+m}$ satisfying the relation (\ref{grouprelation}), we can denote $p_{x_i}^{+}=Y^{\textbf{\emph a}_i}$ and $p_{x_i}^{-}=Y^{\textbf{\emph b}_i}$, where $Y^{\textbf{\emph a}_i}=\prod\limits_{j=1}^{m}x_{n+j}^{a_j}$ for $\textbf{\emph a}_i=(a_{i1},\cdots,a_{im})$ and $Y^{\textbf{\emph b}_i}=\prod\limits_{j=1}^{m}x_{n+j}^{b_j}$ for $\textbf{\emph b}_i=(b_{i1},\cdots,b_{im})$ with $a_{ij},b_{ij}\in \mathds{Z}_{\geq 0}, j=1,\cdots,m$, by (\ref{grouprelation}) and the normalization condition. It is easy to see that $\textbf{\emph a}_i$ and $\textbf{\emph b}_i$ can be determined uniquely by $\textbf{\emph a}_i-\textbf{\emph b}_i$.
Precisely, if $\textbf{\emph a}_i-\textbf{\emph b}_i\overset{\Delta}{=}\textbf{\emph c}_i=(c_{ij})$, then $a_{ij}=max\{c_{ij},0\}$ and $b_{ij}=max\{-c_{ij},0\}$ for $1\leq j\leq m$.

Following the above discussion, for a given $P$ from the seed $\Sigma=(X,P,B)$, we give uniquely an $n\times m$ integer matrix $B_1=\left(\begin{array}{c}
\textbf{\emph c}_1 \\
\cdots\\
\textbf{\emph c}_n
\end{array}\right)$; conversely, for a given $n\times m$ integer matrix $B_1=\left(\begin{array}{c}
\textbf{\emph c}_1 \\
\cdots\\
\textbf{\emph c}_n
\end{array}\right)$ with $\textbf{\emph c}_i\in \mathds{Z}^m$ for any $i$, we can construct a $2n$-tuple $P=(p^{\pm}_{x_i})_{x_i\in X}$ with $p^{+}_{x_i}=Y^{\textbf{\emph a}_i}$ and $p^{+}_{x_i}=Y^{\textbf{\emph b}_i}$, where $\textbf{\emph a}_i$ and $\textbf{\emph b}_i$ are decided uniquely by $\textbf{\emph c}_i$ (see \cite{fz1}).

Hence, we can replace $P$ by $B_1$, and then denote the seed $\Sigma=(X,P,B)$ as $\Sigma=(X,\widetilde{B})$ equivalently, where $\widetilde{B}=(B\;B_1)$.

For this cluster algebra $\mathcal A(\Sigma)$ of geometric type, we can write about its seed and mutation equivalence, briefly described as follows. For more details, refer to \cite{fz1}\cite{GSV}, etc.

Give a field $\mathds F$ as an extension of the rational number field $\mathds Q$, assume that $x_1,\cdots,x_n,x_{n+1},\cdots,x_{n+m}\in \mathds F$ are $n+m$ algebraically independent over $\mathds Q$ for a non-negative integer $n$ and a non-negative integer $m$ such that $\mathds F=\mathds{Q}(x_1,\cdots,x_n,x_{n+1},\cdots,x_{n+m})$, the field of rational functions in the  set $\widetilde X=\{x_1,\cdots,x_n,x_{n+1},\cdots,x_{n+m}\}$ with coefficients in $\mathds{Q}$. We call $\widetilde{X}$ an {\bf
extended cluster}, the subset $X=\{x_1,\cdots,x_n\}$ the {\bf cluster} and the subset  $X_{fr}=\{x_{n+1},\cdots,x_{n+m}\}$  the {\bf frozen cluster} or, say, the {\bf frozen part} of $\widetilde X$ in $\mathds F$, where $x_1,\cdots,x_n$ is said to be {\bf (original) exchangeable cluster variables} and $x_{n+1},\cdots,x_{n+m}$ the {\bf stable (cluster) variables} or say {\bf frozen (cluster) variables} and $1,\cdots,n$ the {\bf exchangeable
vertices} and $n+1,\cdots,n+m$ the {\bf frozen vertices} of $\widetilde X$. In particular, when $n=0$, let $X=\emptyset$ and then $\widetilde X=X_{fr}$.

 The {\bf seed} $\Sigma$ in $\mathds F$ is a pair $\Sigma=(X,\widetilde B)$, where $\widetilde{B}=(B\;B_1)$ is a totally sign-skew-symmetric $n\times (n+m)$ matrix over $\mathds{Z}$. With this, we also denote $\mathds F=\mathds F(\Sigma)$.
The $n\times n$ matrix $B$ is called the {\bf exchange matrix} and
$\widetilde B$ the {\bf extended exchange matrix} corresponding to the seed $\Sigma$.

 In the seed $\Sigma=(X,\widetilde B)$, if $X=\emptyset$, that is,
$\widetilde{X}=X_{fr}$, we call the seed a {\bf trivial seed}.

Given the seed $\Sigma=(X,\widetilde{B})$ and $x,y\in
\widetilde{X}$, we say $(x,y)$ is a {\bf connected pair} in case (i) $x=y$ or; (ii) $x\neq y$ but $b^2_{xy}+b^2_{yx}\neq 0$ with $\{x,y\}\cap X\neq \emptyset$. A seed $\Sigma$ is defined to be {\bf connected} if for any $x,y\in \widetilde{X}$,
there exists a sequence of variables
$(x=z_0,z_1,\cdots,z_s=y)\subseteq \widetilde{X}$ such that
$(z_i,z_{i+1})$ are connected pairs for all $0\leq i\leq s-1$.

Note that in the case of geometric type, the exchange relation in direction $x\in X$ is given as:
\begin{equation}\label{exchangerelation}
\mu_{x}(y)=\left\{\begin{array}{lll} \frac{\prod\limits_{t\in \widetilde{X}, b_{xt}>0}t^{b_{xt}}+\prod\limits_{t\in \widetilde{X}, b_{xt}<0}t^{-b_{xt}}}{x},& \text{if}
~~y=x;\\y,& \text{if}~~ y\neq x. \end{array}\right.\end{equation}

Given a seed $\Sigma=(X,\widetilde B)$, another seed
$\Sigma'=(X',\widetilde B')$ is said to be {\bf adjacent} to
$\Sigma$ (in direction $x\in X$) if $X'=\mu_x(X) $  and $\widetilde B' = \mu_x(\widetilde B)$.
Moreover, we denote
$\Sigma' = \mu_{x}(\Sigma) = (\mu_{x}(X),\mu_{x}(\widetilde{B}))$.

In this paper, beginning from Section 3, cluster algebras will always be  {\em cluster algebras of geometric type} defined above.

\section{Partial seed homomorphisms and  partial ideal rooted cluster morphisms}

Seed homomorphisms are originally introduced in \cite{HLY} for studying the interne structure of (rooted) cluster algebras. Here, we recall those useful definitions and results.

 For the initial seed $\Sigma=(X,\widetilde{B})$ of a cluster algebra $\mathcal A$ and two pairs $(x,y)$ and $(z,w)$ with $x,z\in X$ and
 $y,w\in\widetilde{X}$, we say that $(x,y)$ and $(z,w)$ are {\bf adjacent
pairs} if $b_{xz}\neq 0$ or $x=z$.

\begin{Def}\label{seedhom}(\cite{HLY})
Let $\Sigma=(X,\widetilde{B})$ and $\Sigma'=(X',\widetilde{B'})$ be two
seeds. A map $f$ from $\widetilde{X}$ to $\widetilde{X'}$ is called a {\bf seed homomorphism} from the seed $\Sigma$ to the seed $\Sigma'$ if it satisfies that
 \\
 (a) ~ $f(X)\subseteq X'$ and;\\
 (b) ~ for any adjacent pairs $(x,y)$ and $(z,w)$ with $x,z\in X$ and
 $y,w\in\widetilde{X}$,
 \begin{equation}
(b'_{f(x)f(y)}b_{xy})(b'_{f(z)f(w)}b_{zw})\geq 0 \;\;\;\;\;\;\;\text{and}\;\;\;\;\;\;\; |b'_{f(x)f(y)}|\geq |b_{xy}|.
\end{equation}
\end{Def}

For seed homomorphisms $f:\Sigma\rightarrow\Sigma'$ and
$g:\Sigma'\rightarrow\Sigma''$, define their composition $gf:\Sigma\rightarrow\Sigma''$
satisfying that $gf(x)=g(f(x))$ for all $x\in \widetilde{X}$. Then we can define the {\bf seed category}, denoted as
\textbf{Seed}, to be the category whose objects are all seeds and
whose morphisms are all seed homomorphisms with composition defined as
above.

 \begin{Def}\label{mixingseed}
 Let $\Sigma=(X,\widetilde{B})$ be a seed with $\widetilde{B}$
an $n\times (n+m)$ totally sign-skew-symmetric  integer matrix. Assume $I_{0}$
is a subset of $X$, $I_{1}$ a subset of $\widetilde{X}$ with
$I_{0}\cap I_{1}=\emptyset$ and $I_{1}=I_{1}'\cup I_{1}''$ for
$I_{1}'=X\cap I_{1}$ and $I_{1}''=X_{fr}\cap I_{1}$. Denoting
$X'=X\backslash (I_{0}\cup I_{1}')$,
$\widetilde{X'}=\widetilde{X}\backslash I_{1}$ and $\widetilde{B'}$
as a $\sharp X' \times \sharp \widetilde{X'}$-matrix with
$b_{xy}'=b_{xy}$ for any $x\in X'$ and $y\in \widetilde{X'}$, one can
define the new seed $\Sigma_{I_{0},I_{1}}=(X',\widetilde{B'})$, which
is called a {\bf mixing-type sub-seed} or, say, {\bf $(I_{0},I_{1})$-type sub-seed}, of the seed
$\Sigma=(X,\widetilde{B})$.
\end{Def}

\begin{Def}\label{seediso}
Let $\Sigma$ and $\Sigma'$ be two seeds and $f:\Sigma\rightarrow\Sigma'$
be a seed homomorphism. $f$ is called a {\bf seed isomorphism} if $f$
induces bijections $X\rightarrow X'$ and $\widetilde{X}\rightarrow
\widetilde{X'}$ and $|b_{xy}|=|b'_{f(x)f(y)}|$ for all $x\in X$ and
$y\in \widetilde{X}$.
\end{Def}

Trivially, we have the following lemmas by the definitions of
seed homomorphisms and seed isomorphisms.

\begin{Lem}\label{isomorphism}
A seed homomorphism $f:\Sigma\rightarrow\Sigma'$ is a seed isomorphism if
and only if there exists a unique seed homomorphism
$f^{-1}:\Sigma'\rightarrow \Sigma$ such that $f^{-1}f=id_{\Sigma}$
and $ff^{-1}=id_{\Sigma'}$.
\end{Lem}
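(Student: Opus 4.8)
The plan is to prove both implications directly from the two defining conditions of a seed homomorphism (Definition \ref{seedhom}) together with Definition \ref{seediso}, using the set-theoretic inverse of $f$ as the only candidate for $f^{-1}$.

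For the forward direction, assume $f$ is a seed isomorphism. Since $f$ restricts to bijections $X\to X'$ and $\widetilde{X}\to\widetilde{X'}$, it admits a set-theoretic inverse $g:\widetilde{X'}\to\widetilde{X}$ which restricts to a bijection $X'\to X$; I would take $g$ as the candidate $f^{-1}$. The only real work is to check that $g$ is itself a seed homomorphism. Condition (a), $g(X')\subseteq X$, is immediate from the bijection $X\to X'$. For (b), I would observe that because $|b_{xy}|=|b'_{f(x)f(y)}|$ for all $x\in X$, $y\in\widetilde X$, the property of being an adjacent pair is transported by $g$: indeed $b'_{x'z'}\neq 0$ iff $b_{g(x')g(z')}\neq 0$, and $x'=z'$ iff $g(x')=g(z')$, so adjacent pairs in $\Sigma'$ pull back to adjacent pairs in $\Sigma$. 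Writing $x=g(x')$, $y=g(y')$, $z=g(z')$, $w=g(w')$, the sign inequality required for $g$ is literally the sign inequality of $f$ for the pulled-back pairs, and the magnitude condition holds with equality. Thus $g$ is a seed homomorphism with $gf=id_{\Sigma}$ and $fg=id_{\Sigma'}$. Uniqueness is then forced, since any seed homomorphism $h$ with $hf=id_{\Sigma}$ and $fh=id_{\Sigma'}$ must be the set-theoretic inverse of the bijection $f$, hence equals $g$.

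For the converse, suppose such a two-sided inverse seed homomorphism $f^{-1}$ exists. Then $f$ is a bijection $\widetilde X\to\widetilde{X'}$. To see that it induces a bijection $X\to X'$, I would combine $f(X)\subseteq X'$ (condition (a) for $f$) with $f^{-1}(X')\subseteq X$ (condition (a) for $f^{-1}$): applying $f^{-1}$ to the former gives $X=f^{-1}(f(X))\subseteq f^{-1}(X')\subseteq X$, so $f^{-1}(X')=X$ and therefore $f(X)=X'$. It remains to upgrade the magnitude inequalities to equalities. The key step is to apply condition (b) to a single adjacent pair taken against itself: choosing $(x,y)=(z,w)$, which are adjacent since $x=z$, yields $|b'_{f(x)f(y)}|\geq |b_{xy}|$ for every $x\in X$, $y\in\widetilde X$. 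Applying the same observation to $f^{-1}$ and specializing to $x'=f(x)$, $y'=f(y)$ (so that $f^{-1}(x')=x$ and $f^{-1}(y')=y$) gives $|b_{xy}|\geq|b'_{f(x)f(y)}|$. Combining the two inequalities forces $|b_{xy}|=|b'_{f(x)f(y)}|$, which together with the bijections on $X$ and $\widetilde X$ is exactly the definition of a seed isomorphism.

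The argument is largely bookkeeping; the one place that genuinely exploits the shape of Definition \ref{seedhom} is the extraction of the magnitude inequality $|b'_{f(x)f(y)}|\geq|b_{xy}|$ by degenerating condition (b) to a single pair, which is what allows the two inverse-homomorphism inequalities to pinch to an equality. I expect the most error-prone point to be confirming that adjacency of pairs is preserved under the inverse map, since the sign condition for $f^{-1}$ must be applied to genuinely adjacent pairs; this is precisely where the already-established magnitude equalities are invoked in the forward direction.
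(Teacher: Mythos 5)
Your proof is correct, and it is essentially the argument the paper has in mind: the paper gives no proof at all, simply declaring the lemma to hold ``trivially by the definitions of seed homomorphisms and seed isomorphisms,'' and your direct verification (taking the set-theoretic inverse as the candidate, transporting adjacency via the magnitude equalities, and in the converse pinching the two inequalities from condition (b) applied to a pair against itself) is precisely the routine check being elided. The one step worth having made explicit --- that the degenerate choice $(z,w)=(x,y)$ is a legitimate adjacent pair because $x=z$, so the inequality $|b'_{f(x)f(y)}|\geq|b_{xy}|$ holds for \emph{every} pair --- is handled correctly in your write-up.
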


Similar as the definition for rooted cluster morphism, we define the image seed of a seed homomorphism.

\begin{Def} \label{imageseed}
 Let $f:\Sigma\rightarrow\Sigma'$ be a seed homomorphism. The {\bf image seed} of $\Sigma$ under $f$ is defined to be
$f(\Sigma)=(f(X),B'')$, where
$B''$ is a $\#(f(X)) \times \#(f(\widetilde{X}))$-matrix with $b''_{xy}=b'_{xy}$ for any $x\in
 f(X)$ and $y\in f(\widetilde{X})$.
\end{Def}

Recall that a seed homomorphism $f:\Sigma\rightarrow \Sigma'$ is called {\bf injective} if $f(\Sigma)$ is a mixing-type sub-seed of $\Sigma'$, see \cite{HLY}.

\begin{Def}\label{partial seed}
 Define a {\bf partial seed homomorphism} $f$ from $\Sigma$ to
$\Sigma'$ to be a seed homomorphism $f$ from an
$(I_0,I_1)$-type sub-seed $\Sigma_{I_0,I_1}$ of $\Sigma$ to $\Sigma'$,
which is denoted as $(f, \Sigma_{I_0, I_1}, \Sigma')$ or briefly as
$f$. Denote $\Sigma_{f}=\Sigma_{I_0,I_1}$ and $\rm{Hom_{par}}(\Sigma,\Sigma')$ be the set of all partial seed
homomorphisms from $\Sigma$ to $\Sigma'$.
\end{Def}

Denote
$\widetilde{X}_{\Sigma_f}=Dom(f)=\widetilde{X}\setminus I_1$ as the domain of $f$ and $(\widetilde{X}_{\Sigma_f})_{ex}=Dom(f)_{ex}=X\setminus (I_0\cup I_1)$ as the exchangeable part and $(\widetilde{X}_{\Sigma_f})_{fr}=Dom(f)_{fr}=(X_{fr}\cup I_0)\setminus I_1$ as the frozen part of $Dom(f)$ such that $Dom(f)=Dom(f)_{ex}\cup Dom(f)_{fr}$.

 For an $(I_0,I_1)$-type sub-seed
$\Sigma_{I_0,I_1}=(X\setminus I_0, \widetilde{B}')$ of a seed
$\Sigma=(X,\widetilde{B})$,  we define a special seed homomorphism
$id_{I_0,I_1}:\Sigma_{I_0,I_1}\rightarrow\Sigma$ as follows.

 $id_{I_0,I_1}|_{\widetilde{X}\setminus I_1}$ is just the natural
inclusion from $\widetilde{X}\setminus I_1$ to $\widetilde{X}$. Then,
Definition \ref{seedhom} (a) holds for $id_{I_0,I_1}$.

We have $b'_{xy}=b_{xy}$ for all $x\in X\setminus I_0$ and $y\in
\widetilde{X}\setminus I_1$ by the definition of $\Sigma_{I_0,I_1}$.
Then for any adjacent pairs $(x,y)$ and $(z,w)$ with $x,z\in
X\setminus I_0$, $y,w\in \widetilde{X}\setminus I_1$ , trivially
$$b'_{xy}b_{xy}b'_{zw}b_{zw}\geq 0\;\;\;\text{and}\;\;\;
|b'_{xy}|=|b_{xy}|.$$ But $id_{I_0,I_1}(u)=u$ for $u=x,y,z,w$.
Hence, Definition \ref{seedhom} (b) holds.

 By Definition \ref{imageseed}, we have
$(id_{I_0,I_1})(\Sigma_{I_0,I_1})=\Sigma_{I_0,I_1}$, which means
that $id_{I_0,I_1}$ is an injective seed homomorphism.

Of course, as a partial seed homomorphism, $id_{I_0,I_1}\in
\rm{End_{par}}(\Sigma)$.

For two partial seed homomorphisms $(f, \Sigma_{I_0, I_1},
\Sigma')\in \rm{Hom_{par}}(\Sigma,\Sigma')$ and $(g, \Sigma'_{J'_0,
J'_1}, \Sigma'')\in \rm{Hom_{par}}(\Sigma',\Sigma'')$, their {\bf
composition of partial
seed homomorphisms}, denoted as \\
 \centerline{$(g,
\Sigma'_{J'_0, J'_1}, \Sigma'')\circ(f, \Sigma_{I_0, I_1}, \Sigma')$\;\;
or briefly as\;\; $g\circ f$,} is defined as $(g\circ f)(x)=g(f(x))$ for all $x\in Dom(g\circ f)$ if it satisfies:
$$Dom(g\circ f)=Dom(g\circ f)_{ex}\cup Dom(g\circ f)_{fr},$$ where
 \begin{equation}\label{ex-f}
Dom(g\circ f)_{ex}=\{x\in Dom(f)_{ex}| f(x)\in Dom(g)_{ex}\},\;\;\text{and}\;\;Dom(g\circ f)_{fr}=\{x\in Dom(f)_{fr}| f(x)\in Dom(g)_{fr}\}.
 \end{equation}

 We claim that
$g\circ f$ is a partial seed homomorphism from $\Sigma$ to $\Sigma''$,
that is, the set of partial seed homomorphisms is closed under this
composition.

In fact, for $g\circ f$, the condition (a) of Definition
\ref{seedhom} is clear. For (b), $(x,y)$ and
$(z,w)$ are adjacent pairs in $\Sigma$ with $x,z\in X\cap Dom(g\circ f)$
and $y,w\in Dom(g\circ f)$. Since $f$ is partial seed homomorphism,
therefore, we have $f(x),f(z)\in X'$ and
\begin{equation}\label{equ:*}
 (b'_{f(x)f(y)}b_{xy})(b'_{f(z)f(w)}b_{zw})\geq 0
\;\;\;\text{and}\;\;\; |b'_{f(x)f(y)}|\geq|b_{xy}|.
\end{equation}
 Thus, $(f(x),f(y))$ and $(f(z),f(w))$ are adjacent pairs in $\Sigma'$
with $f(x),f(z)\in X'\cap Dom(g)$ and $f(y),f(w)\in Dom(g)$. As $g$
is a partial seed homomorphism, we have
\begin{equation}\label{equ:**}
(b''_{g\circ f(x)g\circ f(y)}b'_{f(x)f(y)})(b''_{g\circ f(z)g\circ f(w)}b'_{f(z)f(w)})\geq 0
\;\;\;\text{and} \;\;\;|b''_{g\circ f(x)g\circ f(y)}|\geq|b'_{f(x)f(y)}|.
\end{equation}
By (\ref{equ:*}) and (\ref{equ:**}), we have
\begin{equation}
(b''_{g\circ f(x)g\circ f(y)}b_{xy})(b''_{g\circ f(z)g\circ f(w)}b_{zw})\geq 0
\;\;\;\text{and} \;\;\;|b''_{g\circ f(x)g\circ f(y)}|\geq|b_{xy}|.
\end{equation}
Therefore, $g\circ f$ satisfies the condition (b) of Definition
\ref{seedhom}.

Hence, for $\Sigma=\Sigma'$, $\rm{End_{par}}(\Sigma)$ is closed under this
composition, which is considered as the multiplication. Below, we will prove that the associative law holds for this multiplication in $\rm{End_{par}}(\Sigma)$.

\begin{Rem}
In Definition \ref{partial seed}, we allow the mixing-type sub-seed
$\Sigma_{I_0, I_1}$ to be the empty seed $\emptyset$.
  In this case, we say $f$ to be the {\bf empty homomorphism} too, denoted as $\emptyset$. Thus, if $f^{-1}(\widetilde{X'}\setminus I'_1)\subseteq I_1$, then $g\circ f=\emptyset$.
\end{Rem}

\begin{Fac}\label{dom}
If $f\in \rm End_{par}(\Sigma,\Sigma')$ and $g\in \rm End_{par}(\Sigma',\Sigma'')$, then

(a)~ $\Sigma_{g\circ f}$ is a  mixing-type sub-seed of $\Sigma_{f}$. In general, we can denote $\Sigma_{g\circ f}=(\Sigma_{f})_{\emptyset, I_1}$, where $I_1=\{x\in Dom(f)_{ex}\;|\;f(x)\not\in Dom(g)_{ex}\}\cup \{x\in Dom(f)_{f}\;|\;f(x)\not\in Dom(g)_{f}\}$ and

(b)~ The image seed $(g\circ f)(\Sigma_{g\circ f})$ is a mixing-type sub-seed of $g(\Sigma_{g})$.
\end{Fac}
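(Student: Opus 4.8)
The plan is to prove both parts by directly unwinding the definitions of the composition $g\circ f$, of the mixing-type sub-seed operation (Definition \ref{mixingseed}), and of the image seed (Definition \ref{imageseed}), and then matching the resulting extended clusters together with their exchangeable/frozen splittings. Since every extended exchange matrix occurring here is a restriction of the ambient matrix ($\widetilde B$ in part (a), $\widetilde B''$ in part (b)), the compatibility of matrix entries will be automatic, and the only genuine content is the bookkeeping of which vertices survive and how their exchangeable/frozen status is assigned.

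For part (a), I would first record that $\Sigma_f=\Sigma_{I_0,I_1}$ has extended cluster $Dom(f)$, exchangeable part $Dom(f)_{ex}$ and frozen part $Dom(f)_{fr}$, and that by (\ref{ex-f}) the composite has $Dom(g\circ f)_{ex}=\{x\in Dom(f)_{ex}\mid f(x)\in Dom(g)_{ex}\}$ and $Dom(g\circ f)_{fr}=\{x\in Dom(f)_{fr}\mid f(x)\in Dom(g)_{fr}\}$. Writing the $I_1$ of the statement as $I_1=I_1^{ex}\cup I_1^{fr}$ with $I_1^{ex}\subseteq Dom(f)_{ex}$ and $I_1^{fr}\subseteq Dom(f)_{fr}$ (a disjoint decomposition, so $I_1\subseteq Dom(f)$), I would apply the operation $(\emptyset,I_1)$ to $\Sigma_f$. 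Since the first index is empty, no exchangeable vertex is frozen, and Definition \ref{mixingseed} gives extended cluster $Dom(f)\setminus I_1$, exchangeable part $Dom(f)_{ex}\setminus I_1^{ex}$ and frozen part $Dom(f)_{fr}\setminus I_1^{fr}$. These coincide termwise with $Dom(g\circ f)$, $Dom(g\circ f)_{ex}$ and $Dom(g\circ f)_{fr}$, and the matrix entries agree as restrictions of $\widetilde B$, so $\Sigma_{g\circ f}=(\Sigma_f)_{\emptyset,I_1}$ is indeed a mixing-type sub-seed of $\Sigma_f$.

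For part (b), I would set $Y_{ex}=f(Dom(g\circ f)_{ex})\subseteq Dom(g)_{ex}$ and $Y_{fr}=f(Dom(g\circ f)_{fr})\subseteq Dom(g)_{fr}$, so that by Definition \ref{imageseed} the image seed $(g\circ f)(\Sigma_{g\circ f})$ has extended cluster $g(Y_{ex}\cup Y_{fr})$, exchangeable part $g(Y_{ex})$ and frozen part $g(Y_{ex}\cup Y_{fr})\setminus g(Y_{ex})$, while $g(\Sigma_g)$ has extended cluster $g(Dom(g))$ and exchangeable part $g(Dom(g)_{ex})$. Using $Y_{ex}\cup Y_{fr}\subseteq Dom(g)$ and $Y_{ex}\subseteq Dom(g)_{ex}$, I would exhibit the image as the mixing-type sub-seed $(g(\Sigma_g))_{J_0,J_1}$ with deletion set $J_1=g(Dom(g))\setminus g(Y_{ex}\cup Y_{fr})$ and freezing set $J_0=\{v\in g(Dom(g)_{ex})\mid v\in g(Y_{ex}\cup Y_{fr}),\ v\notin g(Y_{ex})\}$, check $J_0\cap J_1=\emptyset$, and verify by a short set computation that $J_0\cup(J_1\cap g(Dom(g)_{ex}))$ consists exactly of the vertices of $g(Dom(g)_{ex})$ outside $g(Y_{ex})$, so that the surviving exchangeable part is $g(Y_{ex})$ and the surviving extended cluster is $g(Y_{ex}\cup Y_{fr})$.

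The step I expect to be the main obstacle is the exchangeable/frozen bookkeeping in part (b), which is delicate because $g$ need not be injective: a vertex $v\in g(Dom(g)_{ex})$ may lie in $g(Y_{fr})$, and thus look like a frozen image vertex, while simultaneously lying in $g(Y_{ex})$. I would resolve this by giving priority to the exchangeable image, declaring $v$ exchangeable precisely when $v\in g(Y_{ex})$ and placing it in $J_0$ only otherwise; with this convention the trichotomy ``$v$ deleted / $v$ frozen / $v$ kept exchangeable'' is exhaustive and disjoint and agrees with the image-seed structure. Everything else reduces to comparing subsets of $\widetilde X''$ and to the remark that all exchange matrices in play are restrictions of $\widetilde B''$, so no further argument about the matrix data is needed.
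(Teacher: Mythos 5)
Your proposal is correct. The paper states Fact \ref{dom} without proof, treating it as immediate from Definitions \ref{mixingseed}, \ref{imageseed} and the composition rule (\ref{ex-f}); your argument is exactly the verification being left implicit, and your explicit handling of the non-injective case in part (b) --- giving priority to $g(Y_{ex})$ when a vertex lies in both $g(Y_{ex})$ and $g(Y_{fr})$, which matches Definition \ref{imageseed} declaring the exchangeable part of an image seed to be $f(X)$ --- correctly resolves the only point where the bookkeeping is delicate.
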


\begin{Rem}\label{f1}
For any seed homomorphism
$f_1:\Sigma_{I_0,I_1}\rightarrow\Sigma'_{I'_0,I'_1}$, we associate a
partial seed homomorphism in $\rm Hom_{par}(\Sigma,\Sigma')$, that is,
$f=id_{I'_0,I'_1}f_1:\Sigma_{I_0,I_1}\rightarrow \Sigma'$.

Conversely, for any partial seed homomorphism $f:\Sigma_{I_0,I_1}\rightarrow \Sigma'$, denoted by
$f(\Sigma_{I_0,I_1})=\Sigma'_{I'_0,I'_1}$, we can define a seed homomorphism $f_1:\Sigma_{I_0,I_1}\rightarrow\Sigma'_{I'_0,I'_1}$ via $f_1(x)=f(x)$ for all $x\in \widetilde{X}\setminus I_1$, and thus, $f=id_{I'_0,I'_1}f_1$. It is easy to see that such $f_1$ is unique for $f$.
\end{Rem}

Unfortunately, $f\not=f\circ id_{I_0,I_1}$ since usually
$Dom(f)\not=Dom(f\circ id_{I_0,I_1})$. It is similar for
$id=id_{\emptyset,\emptyset}$, so there does not exist identity in
$\rm{End_{par}}(\Sigma)$.

\begin{Prop}\label{semigroup} The associative law of the composition of partial seed homomorphisms in $\rm{End_{par}}(\Sigma)$ holds.
 Then, with the composition as multiplication, $\rm{End_{par}}(\Sigma)$ is a semigroup with zero
$\emptyset:=(\emptyset,\emptyset,\Sigma)$, including some zero-divisors.
\end{Prop}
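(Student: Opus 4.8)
The plan is to establish the associative law $h\circ(g\circ f)=(h\circ g)\circ f$ for arbitrary $f,g,h\in\rm{End_{par}}(\Sigma)$, since closure under $\circ$ has already been verified above; the semigroup structure, the zero, and the existence of zero-divisors then follow quickly. Because every composite acts by the single rule $x\mapsto h(g(f(x)))$ on whatever its domain turns out to be, the entire content of associativity lies in proving that the two bracketings produce the \emph{same} domain, together with the same splitting of that domain into its exchangeable and frozen parts.

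For this I would unfold both sides using only the defining formula (\ref{ex-f}). Writing out $h\circ(g\circ f)$ through the domain of $g\circ f$ gives
\[
Dom(h\circ(g\circ f))_{ex}=\{x\in Dom(f)_{ex}\mid f(x)\in Dom(g)_{ex},\ g(f(x))\in Dom(h)_{ex}\},
\]
and the identical expression with every subscript $ex$ replaced by $fr$ for the frozen part. Unfolding $(h\circ g)\circ f$ through the domain of $h\circ g$ yields
\[
Dom((h\circ g)\circ f)_{ex}=\{x\in Dom(f)_{ex}\mid f(x)\in Dom(g)_{ex},\ g(f(x))\in Dom(h)_{ex}\},
\]
which is visibly the same set, and likewise for the frozen parts. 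Hence the domains coincide with matching $ex/fr$ decompositions, and since both composites send each such $x$ to $h(g(f(x)))$, they are equal as partial seed homomorphisms.

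The step I expect to demand the most care is the separate bookkeeping of the exchangeable and frozen parts forced by (\ref{ex-f}). An $x$ which is exchangeable in $f$ but whose image $f(x)$ is frozen in $g$ lands in neither $Dom(g\circ f)_{ex}$ nor $Dom(g\circ f)_{fr}$: it is discarded outright rather than demoted to the frozen part. One must check that this discarding is performed consistently no matter how the triple product is bracketed, i.e. that no variable can migrate between the exchangeable and frozen buckets depending on the association. The perfect symmetry of the two unfolded descriptions above is exactly what rules this out, but it is the place where a careless argument would break.

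Granting associativity, $(\rm{End_{par}}(\Sigma),\circ)$ is a semigroup. To see that $\emptyset=(\emptyset,\emptyset,\Sigma)$ is a two-sided zero, I would read off from (\ref{ex-f}) that $Dom(f\circ\emptyset)\subseteq Dom(\emptyset)=\emptyset$, while $Dom(\emptyset\circ f)_{ex}=\{x\in Dom(f)_{ex}\mid f(x)\in Dom(\emptyset)_{ex}=\emptyset\}=\emptyset$ and similarly for the frozen part; hence $f\circ\emptyset=\emptyset\circ f=\emptyset$ for every $f$. Finally, for zero-divisors it suffices to exhibit two non-empty elements with empty product, and this is possible whenever $\widetilde X$ has at least two elements: fixing $x_1\in\widetilde X$, the inclusions $f=id_{\emptyset,\widetilde X\setminus\{x_1\}}$ (with $Dom(f)=\{x_1\}$) and $g=id_{\emptyset,\{x_1\}}$ (with $Dom(g)=\widetilde X\setminus\{x_1\}$) are both non-empty, yet $g\circ f=\emptyset$ because $f(x_1)=x_1\notin Dom(g)$ collapses the composite domain to $\emptyset$. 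This produces the asserted zero-divisors and completes the proof.
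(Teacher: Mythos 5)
Your proof is correct and follows essentially the same route as the paper: both arguments reduce associativity to unfolding the domain condition (\ref{ex-f}) on each side into the same three-fold condition ($x\in Dom(f)_{ex}$, $f(x)\in Dom(g)_{ex}$, $g(f(x))\in Dom(h)_{ex}$, and likewise for the frozen parts), and your explicit attention to variables being discarded rather than demoted between the $ex$ and $fr$ buckets is exactly the point the paper's "similarly for $fr$" glosses over. The only cosmetic difference is the zero-divisor witness: the paper uses $id_{\{x\},I}\circ id_{\emptyset,I}=\emptyset$ with $I=\widetilde X\setminus\{x\}$, exploiting the exchangeable/frozen mismatch on the same one-element underlying set, whereas you use two inclusions with disjoint domains; both verifications are immediate and valid.
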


\begin{proof}
Let $f,g,h\in \rm End_{par}(\Sigma)$ with
$\Sigma_f=\Sigma_{I_0,I_1}$, $\Sigma_g=\Sigma_{I'_0,I'_1}$ and
$\Sigma_h=\Sigma_{I''_0,I''_1}$, respectively. By the definition of composition, we have that $x\in
Dom((h\circ g)\circ f)_{ex}$ is equivalent to $x\in Dom(f)_{ex}$, $f(x)\in
Dom(g)_{ex}$ and $g\circ f(x)\in Dom(h)_{ex}$, which is also equivalent to
$x\in Dom(h\circ (g\circ f))_{ex}$. Hence, $Dom((h\circ g)\circ f)_{ex}=Dom(h\circ(g\circ f))_{ex}$.
It can be proved similarly that
$Dom((h\circ g)\circ f)_{fr}=Dom(h\circ (g\circ f))_{fr}$. Therefore, $Dom((h\circ g)\circ f)=Dom(h\circ(g\circ f))$ and then $(h\circ g)\circ f=h\circ(g\circ f)$.

Assume that $x\in X$, and
 let $I=\widetilde{X}\setminus\{x\}$. Hence, we have
$id_{\{x\},I}\circ id_{\emptyset,I}=\emptyset$, that is, $id_{\{x\},I}$
and $id_{\emptyset,I}$  are zero-divisors in
$\rm{End_{par}}(\Sigma)$.
\end{proof}

 Denote by $E(\rm{End_{par}}(\Sigma))$ the set of all
idempotents in the semigroup $\rm{End_{par}}(\Sigma)$. In
particular, $id_{I_0,I_1}=id_{I_0,I_1}\circ id_{I_0,I_1}$, so
$id_{I_0,I_1}$ is an idempotent, i.e. $id_{I_0,I_1}\in
E(\rm{End_{par}}(\Sigma))$.

However, in $\rm End_{par}(\Sigma)$, in general, an idempotent may not be in the form $id_{I_0,I_1}$. For example, for a quiver  $Q:\xymatrix{x_1\ar[r]^{}&x_2&x_3\ar[l]^{}}$,
let $f:\Sigma(Q)\rightarrow\Sigma(Q)_{\emptyset,\{x_3\}}$ be the partial seed
homomorphism such that $f(x_1)=f(x_3)=x_1$ and $f(x_2)=x_2$. It is clear that $f$
is an idempotent,  but is not in the form $id_{I_0,I_1}$.

Now recall some elementary definitions and properties on rooted cluster morphisms, refer to \cite{ADS}.

\begin{Def}(\cite{ADS})\label{rootmorph}
A {\bf rooted cluster morphism}  $f$ from $\mathcal{A}(\Sigma)$
to $\mathcal{A}(\Sigma')$ is a ring morphism which sends
1 to 1 satisfying:\\
CM1. $f(\widetilde{X})\subseteq \widetilde{X'}\sqcup \mathbb{Z}$;\\
CM2. $f(X)\subseteq X'\sqcup \mathbb{Z}$;\\
CM3. For every $(f,\Sigma,\Sigma')$-biadmissible sequence
$(y_{1},y_{2},\cdots,y_{s})$ and any $y$ in $\widetilde{X}$, we have
\\ \centerline{ $f(\mu_{y_{s}}\cdots \mu_{y_{1}}(y))=\mu_{f(y_{s})}\cdots\mu_{f(y_{1})}(f(y))$.}

\end{Def}

In \cite{ADS}, the authors defined a category \textbf{Clus} with objects are rooted cluster algebras and morphisms are rooted cluster morphisms.

\begin{Def}(\cite{HLY})\label{reducedseed}
Let $f:\mathcal{A}(\Sigma)\rightarrow \mathcal{A}(\Sigma')$ be a
rooted cluster morphism and
\begin{equation}\label{setI_1}
I_1=\{x\in
\widetilde{X}|f(x)\in\mathbb{Z}\}.
\end{equation}
From $f$,  define a
new seed $\Sigma^{(f)}=(X^{(f)},\widetilde{B^{(f)}})$ satisfying that:

 (I). $X^{(f)}=X\setminus I_1=\{x\in X|f(x)\notin \mathbb{Z}\}$;

(II). $\widetilde{X^{(f)}}=\widetilde{X}\setminus I_1=\{x\in
\widetilde{X}|f(x)\notin \mathbb{Z}\}$;

(III). $\widetilde{B^{(f)}}=(b^{(f)}_{xy})$ is a $\#(X^{(f)})\times
\#(\widetilde{X^{(f)}})$ matrix with
\begin{eqnarray*}b^{(f)}_{xy}= \left\{\begin{array}{lll}b_{xy}, & \;\text{if}\; f(z)\neq 0 \;\forall z\in I_1 \;\text{adjacent to}\; x \;\text{or}\; y;\\
0,& \text{otherwise}.\\
\end{array}\right.
\end{eqnarray*}
We call $\Sigma^{(f)}=(X^{(f)},\widetilde{B^{(f)}})$ the {\bf contraction} of $\Sigma$ under $f$.
\end{Def}

\begin{Def}\cite{HLY}
We say a rooted cluster morphism $f:\mathcal A(\Sigma)\rightarrow \mathcal A(\Sigma')$ is  noncontractible if $f(x)\neq 0$ for all $x\in \widetilde{X}$.
\end{Def}

\begin{Prop}(\cite{HLY}, Proposition 3.4) \label{induceseed}
 A rooted cluster morphism $f: \mathcal
A(\Sigma)\rightarrow\mathcal A(\Sigma')$ determines uniquely a seed
homomorphism $(f^S, \Sigma^{(f)}, \Sigma')$ from $\Sigma^{(f)}$
to $\Sigma'$ via $f^S(x)=f(x)$ for $x\in \widetilde{X^{(f)}}$.
\end{Prop}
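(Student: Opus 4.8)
The plan is to take $f^S(x):=f(x)$ for $x\in\widetilde{X^{(f)}}$, which is exactly the assignment named in the statement, so the only thing to prove is that this forced map is a seed homomorphism in the sense of Definition \ref{seedhom}; uniqueness is then automatic. First I would settle well-definedness and condition (a). If $x\in\widetilde{X^{(f)}}=\widetilde X\setminus I_1$ then $f(x)\notin\mathbb Z$ by the definition of $I_1$ in (\ref{setI_1}), so CM1 of Definition \ref{rootmorph} forces $f(x)\in\widetilde{X'}$; and if $x\in X^{(f)}=X\setminus I_1$ then $x\in X$ with $f(x)\notin\mathbb Z$, so CM2 gives $f(x)\in X'$. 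This shows $f^S$ maps $\widetilde{X^{(f)}}$ into $\widetilde{X'}$ and $X^{(f)}$ into $X'$, which is condition (a).

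The substance is condition (b): for adjacent pairs $(x,y),(z,w)$ with $x,z\in X^{(f)}$ and $y,w\in\widetilde{X^{(f)}}$ I must verify $(b'_{f(x)f(y)}b^{(f)}_{xy})(b'_{f(z)f(w)}b^{(f)}_{zw})\ge 0$ and $|b'_{f(x)f(y)}|\ge|b^{(f)}_{xy}|$. Since each $b^{(f)}_{xy}$ is either $b_{xy}$ or $0$ by Definition \ref{reducedseed}, every pair with $b^{(f)}_{xy}=0$ satisfies both conditions trivially, so I only treat pairs with $b^{(f)}_{xy}=b_{xy}$; by definition this means no $z\in I_1$ adjacent to $x$ or $y$ has $f(z)=0$. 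For such a pair I fix $x\in X^{(f)}$; since $f(x)\in X'$ is exchangeable, the length-one sequence $(x)$ is $(f,\Sigma,\Sigma')$-biadmissible, so CM3 applied to $y=x$, combined with the exchange relation (\ref{exchangerelation}) and the ring-morphism property of $f$, yields after cancelling the common denominator $f(x)$ the identity
\[
\prod_{t\in\widetilde X,\, b_{xt}>0} f(t)^{b_{xt}} \;+\; \prod_{t\in\widetilde X,\, b_{xt}<0} f(t)^{-b_{xt}} \;=\; \prod_{s,\, b'_{f(x)s}>0} s^{b'_{f(x)s}} \;+\; \prod_{s,\, b'_{f(x)s}<0} s^{-b'_{f(x)s}}.
\]

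Next I would analyse this identity in the Laurent polynomial ring over $\widetilde{X'}$. Splitting each left-hand product into its factors over $t\in I_1$ (whose images are nonzero integers under our standing assumption, yielding nonzero constants $c_\pm$) and over $t\notin I_1$ (whose images are variables, yielding monomials $M_\pm$), the left side is $c_+M_+ + c_-M_-$, while the right side is a sum of two coprime monomials $Q_+,Q_-$ with coefficient $1$. Matching monomials with multiplicity forces $c_\pm=1$ and $\{M_+,M_-\}=\{Q_+,Q_-\}$ in the generic case $M_+\neq M_-$; the degenerate case $M_+=M_-$ forces $Q_+=Q_-=1$, i.e. all neighbours of $x$ lie in $I_1$, whence $b^{(f)}_{xy}=0=b'_{f(x)f(y)}$ for all admissible $y$ and condition (b) is vacuous. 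Comparing the exponent of the variable $f(y)$ on both sides then gives $|b'_{f(x)f(y)}|=\sum|b_{xt}|$, the sum being over $t\notin I_1$ with $f(t)=f(y)$ and $b_{xt}$ of the same sign as $b_{xy}$; since $t=y$ is one such summand this is $\ge|b_{xy}|$, establishing the magnitude inequality, and simultaneously exhibiting a single sign $\varepsilon_x\in\{\pm1\}$ with $\operatorname{sign} b'_{f(x)f(y)}=\varepsilon_x\operatorname{sign} b_{xy}$ for every neighbour $y$ of $x$.

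It remains to prove the sign condition, namely $\varepsilon_x=\varepsilon_z$ whenever $b_{xz}\neq0$, and this cross-vertex consistency is the step I expect to be the main obstacle. My plan is to exploit sign-skew-symmetry on both sides. The magnitude inequality already gives $b'_{f(x)f(z)}\neq0$, and since $B$ and $B'$ are sign-skew-symmetric the pairs $(b_{xz},b_{zx})$ and $(b'_{f(x)f(z)},b'_{f(z)f(x)})$ each have entries of opposite sign. Writing $\operatorname{sign} b'_{f(x)f(z)}=\varepsilon_x\operatorname{sign} b_{xz}$ (from mutating at $x$) and $\operatorname{sign} b'_{f(z)f(x)}=\varepsilon_z\operatorname{sign} b_{zx}$ (from mutating at $z$), and substituting $\operatorname{sign} b'_{f(x)f(z)}=-\operatorname{sign} b'_{f(z)f(x)}$ together with $\operatorname{sign} b_{zx}=-\operatorname{sign} b_{xz}$, I obtain $\varepsilon_x=\varepsilon_z$. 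Hence $b'_{f(x)f(y)}b_{xy}$ keeps a constant sign $\varepsilon_x$ throughout the connected component of $x$, so $(b'_{f(x)f(y)}b^{(f)}_{xy})(b'_{f(z)f(w)}b^{(f)}_{zw})\ge0$ for all adjacent pairs, which with the magnitude inequality completes condition (b) and the proof. The only delicate points to watch are the degenerate monomial cases above and the bookkeeping when $f$ fails to be injective, both of which are absorbed into the exponent sum computing $|b'_{f(x)f(y)}|$.
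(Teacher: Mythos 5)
This proposition is quoted from \cite{HLY} (Proposition 3.4) and the present paper gives no proof of it, so there is nothing internal to compare against; your reconstruction is correct and is the expected argument — deduce condition (a) from CM1/CM2, then apply CM3 to the length-one biadmissible sequence $(x)$ and compare the two exchange binomials monomial by monomial, which is exactly the exponent-comparison technique the paper itself deploys in the proof of Lemma \ref{retraction} (inequality (\ref{great})). Your handling of the degenerate cases (entries killed in $\Sigma^{(f)}$, neighbours collapsing under $f$, the case $M_+=M_-$) and the propagation of the sign $\varepsilon_x$ across adjacent exchangeable vertices via sign-skew-symmetry of $B$ and $B'$ are all sound.
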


\begin{Lem}(\cite{HLY}, Lemma 3.5)\label{equel}
Let $f,g:\mathcal{A}(\Sigma)\rightarrow\mathcal{A}(\Sigma')$
be rooted cluster morphisms. If $f(x)=g(x)\neq 0$ for all $x\in
\widetilde{X}$ of $\Sigma$, then $f=g$.
\end{Lem}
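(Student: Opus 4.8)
The plan is to reduce the equality $f=g$ of ring morphisms to an equality on a generating set of $\mathcal A(\Sigma)$, and then to propagate the agreement on the finite initial extended cluster $\widetilde X$ to all cluster variables by means of the Laurent phenomenon. First I would record that, by Definition \ref{clusteralgebra}, $\mathcal A(\Sigma)$ is the $\mathds Z[\mathcal P]$-subalgebra of $\mathds F$ generated by $\mathcal X$. Since we are in the geometric type, every coefficient occurring in $\mathcal P$ is an honest monomial in the frozen variables $x_{n+1},\dots,x_{n+m}$, and these lie in $\widetilde X\subseteq\mathcal X$; hence $\mathds Z[\mathcal P]\subseteq\mathds Z[\mathcal X]$ and so $\mathcal A(\Sigma)=\mathds Z[\mathcal X]$ is generated as a ring by the set $\mathcal X$ of all cluster variables. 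Consequently it suffices to prove that $f(z)=g(z)$ for every $z\in\mathcal X$.

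The key step is to treat an arbitrary $z\in\mathcal X$, which in general lies in a mutated cluster rather than in $\widetilde X$. By the Laurent phenomenon \cite{fz1}, $z$ is a Laurent polynomial in $\widetilde X$; precisely, I may write $z=N/D$, where $N\in\mathds Z[\widetilde X]$ is a polynomial and $D=\prod_{i=1}^{n}x_i^{d_i}$ is a monomial in the exchangeable initial variables. Clearing denominators yields the polynomial identity $z\cdot D=N$ inside $\mathcal A(\Sigma)$. Applying the ring morphisms $f$ and $g$ and using that they coincide on $\widetilde X$ (so that $f(N)=g(N)$ and $f(D)=g(D)$, both sides being evaluations of the same expression at the common values on $\widetilde X$), I obtain $f(z)\,f(D)=f(N)=g(N)=g(z)\,f(D)$, whence $\bigl(f(z)-g(z)\bigr)f(D)=0$.

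To conclude I would invoke two facts. First, $\mathcal A(\Sigma')$ is an integral domain, being a subalgebra of the field $\mathds F(\Sigma')$. Second, the noncontractibility hypothesis $f(x)\neq 0$ for all $x\in\widetilde X$ forces $f(D)=\prod_{i=1}^{n}f(x_i)^{d_i}\neq 0$. Cancelling the nonzero factor $f(D)$ in the domain gives $f(z)=g(z)$. Since $z\in\mathcal X$ was arbitrary, $f$ and $g$ agree on the generating set $\mathcal X$, and therefore $f=g$ as ring morphisms.

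The main obstacle is exactly the gap between the hypothesis (agreement only on the finite initial cluster $\widetilde X$) and the conclusion (agreement on the infinitely many elements of $\mathcal X$); the Laurent phenomenon is the essential bridge, and the role of the noncontractibility hypothesis is precisely to keep the Laurent denominators invertible after applying $f$. An alternative route would induct on the length of a mutation sequence producing $z$ and use the commutation axiom CM3, but this forces one to track when a $\Sigma$-admissible sequence remains biadmissible under $f$; since the Laurent-phenomenon argument avoids this bookkeeping entirely and never even uses CM1--CM3, I would prefer it.
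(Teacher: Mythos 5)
Your argument is correct, and it is worth noting at the outset that this paper does not prove Lemma \ref{equel} at all: it is imported verbatim from \cite{HLY} (Lemma 3.5 there), so there is no in-paper proof to compare against. Your route --- reduce to the generating set $\mathcal X$, write a non-initial cluster variable as $z=N/D$ by the Laurent phenomenon, apply both ring morphisms to $zD=N$, and cancel $f(D)\neq 0$ in the domain $\mathcal A(\Sigma')\subseteq \mathds F(\Sigma')$ --- is the standard and essentially inevitable proof of this statement, and it correctly isolates the role of the hypothesis $f(x)\neq 0$ (without it the Laurent data at $\widetilde X$ would not determine $f(z)$). One small imprecision: since $\mathds{ZP}=\mathds Z[x_{n+1}^{\pm1},\dots,x_{n+m}^{\pm1}]$ in the geometric type, the Laurent expansion of $z$ may carry negative powers of frozen variables as well, so the cleared denominator $D$ should be taken as a monomial in all of $\widetilde X$, not only in $x_1,\dots,x_n$; this is harmless because the hypothesis $f(x)=g(x)\neq 0$ is assumed for every $x\in\widetilde X$, frozen variables included. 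Likewise the frozen variables are not literally elements of $\mathcal X=\cup_{\bar\Sigma}\bar X$ as defined in Section 2, so the generating set is $\mathcal X\cup X_{fr}$ rather than $\mathcal X$ alone, but $f$ and $g$ agree on $X_{fr}\subseteq\widetilde X$ by hypothesis, so nothing changes. Your closing observation that the argument bypasses CM1--CM3 entirely (and hence proves the stronger statement for arbitrary ring morphisms agreeing and nonvanishing on $\widetilde X$) is accurate.
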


We have the following connection between rooted cluster isomorphism and seed isomorphism.

\begin{Prop}(\cite{HLY}, Proposition 3.8)\label{basiclem}
$\mathcal{A}(\Sigma)\cong\mathcal{A}(\Sigma')$ in \textbf{Clus} if
and only if $\Sigma\cong\Sigma'$ in \textbf{Seed}.
\end{Prop}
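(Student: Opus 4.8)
The plan is to prove the two implications separately, in each case passing explicitly between the categories \textbf{Seed} and \textbf{Clus} at the level of isomorphisms. The ``only if'' direction uses the induced-seed machinery of Proposition \ref{induceseed}, while the ``if'' direction requires an explicit construction of a rooted cluster morphism from a seed isomorphism.

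For the ``only if'' direction, suppose $f:\mathcal{A}(\Sigma)\to\mathcal{A}(\Sigma')$ is a rooted cluster isomorphism with two-sided inverse $g:\mathcal{A}(\Sigma')\to\mathcal{A}(\Sigma)$. First I would show that $f$ (and symmetrically $g$) is noncontractible: since $gf=\mathrm{id}$ and any ring morphism sending $1$ to $1$ fixes $\mathbb{Z}$, if some $f(x)\in\mathbb{Z}$ then $x=g(f(x))\in\mathbb{Z}$, contradicting that $x\in\widetilde{X}$ is a variable. Hence the set $I_1$ of (\ref{setI_1}) is empty and the contraction of Definition \ref{reducedseed} satisfies $\Sigma^{(f)}=\Sigma$ (the correction term in (III) is vacuous), so by Proposition \ref{induceseed} we obtain a genuine seed homomorphism $f^S:\Sigma\to\Sigma'$ with $f^S(x)=f(x)$, and likewise $g^S:\Sigma'\to\Sigma$. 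Next I would check the composition compatibility $(gf)^S=g^S\circ f^S$: evaluated on any $x$ both sides give $g(f(x))$, because $f(x)\in\widetilde{X'}$ by noncontractibility and $g^S$ agrees with $g$ there. Since $(\mathrm{id}_{\mathcal{A}(\Sigma)})^S=\mathrm{id}_\Sigma$, this yields $g^S\circ f^S=\mathrm{id}_\Sigma$ and $f^S\circ g^S=\mathrm{id}_{\Sigma'}$, so by Lemma \ref{isomorphism} the map $f^S$ is a seed isomorphism and $\Sigma\cong\Sigma'$.

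For the ``if'' direction, let $h:\Sigma\to\Sigma'$ be a seed isomorphism, giving a bijection $\widetilde{X}\to\widetilde{X'}$ that restricts to $X\to X'$ with $|b'_{h(x)h(y)}|=|b_{xy}|$. The subtle point, which I expect to be the main obstacle, is that a seed isomorphism need not preserve the signs of the exchange matrix: the inequalities in Definition \ref{seedhom}(b) only force $\mathrm{sign}(b'_{h(x)h(y)}/b_{xy})$ to be constant along adjacent pairs, hence equal to a single $\epsilon$ on each connected component of the exchangeable part of $B$. I would resolve this by recording the sign-reversal invariance of the cluster structure: from the exchange relation (\ref{exchangerelation}) the numerator $\prod_{b_{xt}>0}t^{b_{xt}}+\prod_{b_{xt}<0}t^{-b_{xt}}$ is unchanged when $B$ is replaced by $-B$ (the two monomials merely swap), and from (\ref{matrixmutation}) one verifies $\mu_i(-B)=-\mu_i(B)$, so negating $B$ on a connected component (a choice that may be made independently on distinct components, since mutations do not mix them) produces the identical family of exchange relations.

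Using this invariance I would define the candidate isomorphism $\Phi:\mathcal{A}(\Sigma)\to\mathcal{A}(\Sigma')$ on the initial extended cluster by $x\mapsto h(x)$ and extend it through the exchange relations; the sign-reversal invariance guarantees that $\Phi$ carries each mutation of $\Sigma$ to the corresponding mutation of $\Sigma'$, so conditions CM1--CM3 of Definition \ref{rootmorph} hold and $\Phi$ is a rooted cluster morphism, while the map built analogously from $h^{-1}$ is its two-sided inverse. Thus $\mathcal{A}(\Sigma)\cong\mathcal{A}(\Sigma')$ in \textbf{Clus}, completing the equivalence. The routine verifications I would not belabor are the well-definedness of $\Phi$ as a ring map and the bookkeeping that $\Phi$ and the inverse map compose to the identities; the conceptual weight sits entirely in the noncontractibility argument of the first direction and the sign-reversal analysis of the second.
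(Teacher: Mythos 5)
Your argument is correct, but note that this paper does not actually prove Proposition \ref{basiclem}: it is imported verbatim from the companion paper \cite{HLY} (Proposition 3.8 there), so there is no in-text proof to measure you against. Judged on its own, your reconstruction is sound and identifies the right pressure points. The ``only if'' direction is exactly the natural one: invertibility forces $f(x)\notin\mathbb{Z}$ for every $x\in\widetilde{X}$ (since $g(f(x))=x$ would otherwise place a transcendental element in $\mathbb{Z}$), so the set $I_1$ of (\ref{setI_1}) is empty, $\Sigma^{(f)}=\Sigma$, and Proposition \ref{induceseed} together with the functoriality check $(gf)^S=g^S\circ f^S$ and Lemma \ref{isomorphism} gives the seed isomorphism. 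In the ``if'' direction you correctly isolated the only delicate issue: Definition \ref{seediso} controls $|b'_{h(x)h(y)}|$ but not the signs, and Definition \ref{seedhom}(b) only pins the sign ratio down to a constant $\epsilon=\pm 1$ on each connected component of the exchangeable part. Your resolution --- that the exchange binomial in (\ref{exchangerelation}) is invariant under negating the rows of $\widetilde{B}$ belonging to a component (the two monomials swap), and that $\mu_i(-B)=-\mu_i(B)$ so this sign datum propagates unchanged along any mutation sequence --- is exactly what makes the field isomorphism extending $h$ restrict to a rooted cluster isomorphism satisfying CM1--CM3. The one point worth spelling out if you write this up in full is that the per-component sign function remains compatible after mutation, which holds because mutation never creates arrows between distinct connected components of the initial exchange matrix.
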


Recall that a rooted cluster morphism $f:\mathcal A(\Sigma)\rightarrow \mathcal A(\Sigma')$ is called {\bf ideal} if $\mathcal A(f^S(\Sigma^{(f)}))=f(\mathcal A(\Sigma))$. See Definition 2.11, \cite{ADS}.

\begin{Lem}(1)\label{idealmor.}(Theorem 2.25(2), \cite{CZ})
Let $f:\;\mathcal{A}(\Sigma)\rightarrow\mathcal{A}(\Sigma')$
be an ideal rooted cluster morphism. Then $f=\tau f_1$ with a surjective rooted cluster morphism $f_1$ and an injective rooted cluster morphism $\tau$, that is,
$$f:\mathcal{A}(\Sigma)\overset{f_1}{\twoheadrightarrow}\mathcal{A}(f^S(\Sigma^{(f)}))\overset{\tau}{\hookrightarrow}\mathcal{A}(\Sigma').$$

(2) $f_1$ is an ideal rooted cluster morphism.
\end{Lem}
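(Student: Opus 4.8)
The plan is to factor $f$ through its image as an epimorphism followed by a monomorphism, using the ideal hypothesis to identify the image. Write $\Sigma''=f^S(\Sigma^{(f)})$ for the image seed of the induced seed homomorphism $f^S$ of Proposition \ref{induceseed}, computed as in Definition \ref{imageseed}; thus $\Sigma''=(f^S(X^{(f)}),B'')$ with $b''_{xy}=b'_{xy}$, so that $\Sigma''$ is literally a subfamily of the extended cluster of $\Sigma'$ carrying the inherited submatrix of $\widetilde{B'}$, i.e.\ a mixing-type sub-seed of $\Sigma'$ in the sense of Definition \ref{mixingseed}. By the ideal hypothesis, $\mathcal{A}(\Sigma'')=f(\mathcal{A}(\Sigma))$. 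I then define $f_1:\mathcal{A}(\Sigma)\to\mathcal{A}(\Sigma'')$ to be $f$ with codomain corestricted to its image (so $f_1(x)=f(x)$ for every $x\in\widetilde X$) and $\tau:\mathcal{A}(\Sigma'')\to\mathcal{A}(\Sigma')$ to be the inclusion. Since $\tau f_1(x)=f(x)$ for every $x\in\widetilde X$, Lemma \ref{equel} (applied on the directions with nonzero image, the others being integers that $\tau$ fixes) gives $f=\tau f_1$; moreover $f_1$ is surjective by the ideal hypothesis, while $\tau$ is injective because its induced seed homomorphism is the inclusion whose image is the mixing-type sub-seed $\Sigma''$ (cf.\ the notion of injective seed homomorphism recalled before Definition \ref{partial seed}).

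What remains is to check that $f_1$ and $\tau$ genuinely satisfy CM1--CM3 of Definition \ref{rootmorph}. For $f_1$, conditions CM1 and CM2 are immediate: for $x\in\widetilde X$ one has $f(x)\in\mathbb Z$ exactly when $x\in I_1$ (Definition \ref{reducedseed}), and otherwise $f(x)=f^S(x)$ lies in $f^S(\widetilde{X^{(f)}})$, the extended cluster of $\Sigma''$, with $f(X)$ landing in the cluster of $\Sigma''$ together with $\mathbb Z$; for $\tau$ these two conditions are trivial. The delicate point, which I expect to be the main obstacle, is CM3.

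The heart of CM3 is the comparison of a mutation computed inside the sub-seed $\Sigma''$ with the same mutation computed in $\Sigma'$. First I would observe that every $(f_1,\Sigma,\Sigma'')$-biadmissible sequence is automatically $(f,\Sigma,\Sigma')$-biadmissible, because the exchangeable part $f^S(X^{(f)})$ of $\Sigma''$ sits inside the cluster $X'$ of $\Sigma'$, so $\Sigma''$-admissibility of $(f(y_1),\dots,f(y_s))$ forces $\Sigma'$-admissibility. Hence CM3 for both $f_1$ and $\tau$ reduces to the assertion that $\mu^{\Sigma''}_{f(y_s)}\cdots\mu^{\Sigma''}_{f(y_1)}$ and $\mu^{\Sigma'}_{f(y_s)}\cdots\mu^{\Sigma'}_{f(y_1)}$ agree on elements of $\mathcal{A}(\Sigma'')$ along such sequences (for $\tau$, using surjectivity of $f_1$ to write every $\Sigma''$-biadmissible direction as $f(x)$ with $x$ an $f$-biadmissible direction). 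The key input is CM3 of $f$ itself: applied to a length-one biadmissible direction it equates the $f$-image of the exchange relation at $x$ with the exchange relation at $f(x)$ in $\Sigma'$, which forces every $\Sigma'$-neighbour of the exchangeable vertex $f(x)$ to lie already in $\widetilde{X}_{\Sigma''}$ and to carry the matching exchange exponent. Consequently the truncated exchange relation (\ref{exchangerelation}) in $\Sigma''$ coincides with the one in $\Sigma'$ at every biadmissible exchangeable vertex. I would then run this along the sequence by induction on $s$, verifying at each step that the mutated sub-seed remains neighbour-complete in the mutated ambient seed; this bookkeeping, transporting the biadmissibility data through the mutations while keeping the exchange relations synchronized, is the technically demanding part.

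Finally, part (2) is a formal consequence of surjectivity. Since $f_1$ and $f$ have the same underlying set-map on $\widetilde X$, they determine the same integer locus $I_1$, hence the same contraction $\Sigma^{(f_1)}=\Sigma^{(f)}$ and the same induced seed homomorphism $f_1^S=f^S$; therefore $f_1^S(\Sigma^{(f_1)})=f^S(\Sigma^{(f)})=\Sigma''$. As $f_1$ is surjective, $f_1(\mathcal{A}(\Sigma))=\mathcal{A}(\Sigma'')=\mathcal{A}(f_1^S(\Sigma^{(f_1)}))$, which is exactly the defining identity for $f_1$ to be ideal.
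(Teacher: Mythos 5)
Your part (2) is exactly the paper's proof: the paper's entire displayed argument for this lemma is the observation that $f_1$ and $f$ have the same integer locus, hence $\Sigma^{(f_1)}=\Sigma^{(f)}$ and $f_1^S=f^S$, so surjectivity of $f_1$ gives $f_1(\mathcal A(\Sigma))=\mathcal A(f_1^S(\Sigma^{(f_1)}))$, i.e.\ $f_1$ is ideal. That half of your proposal is correct and identical in substance to the text.

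Part (1), however, is not proved in the paper at all --- it is quoted from \cite{CZ} (Theorem 2.25(2)) --- and your attempted proof of it has a genuine gap precisely where the content of that cited theorem lies. Two steps are asserted rather than established. First, the transfer of biadmissibility: you claim every $(f_1,\Sigma,\Sigma'')$-biadmissible sequence is $(f,\Sigma,\Sigma')$-biadmissible and, for $\tau$, that every $\Sigma''$-biadmissible direction is $f(x)$ for some $f$-biadmissible $x$. Neither is immediate: after the first mutation, the new cluster variable computed in $\Sigma''$ agrees with the one computed in $\Sigma'$ only if the two exchange relations already coincide, which is the very statement you are inducting toward, so admissibility downstairs and synchronization of exchange relations must be proved by a single interleaved induction rather than invoked separately; and for $\tau$, surjectivity of $f_1$ as a ring map does not produce, for a given $\Sigma''$-admissible sequence, a lift to an $f$-biadmissible sequence in $\Sigma$ (a given $\Sigma''$-variable may have several preimages in $\widetilde X$, none of which need be exchangeable at the relevant stage). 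Second, the inductive step itself --- that the mutated image seed stays ``neighbour-complete'' in the mutated ambient seed along the whole sequence --- is exactly the technical heart of the factorization, and you explicitly defer it as ``bookkeeping.'' So the comparison is: your (2) coincides with the paper's proof; your (1) is an outline of the external theorem with its essential steps missing, whereas the paper simply cites it.
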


\begin{proof}
(2) Note that $f_1^S(\Sigma^{(f_1)})=f^S(\Sigma^{(f)})$ and $f_1$ is surjective, thus we have $$\mathcal A(f_1^S(\Sigma^{(f_1)}))=\mathcal{A}(f^S(\Sigma^{(f)}))=f_1(\mathcal A(\Sigma)).$$ Therefore, $f_1$ is an ideal rooted cluster morphism.
\end{proof}

We can determinate all rooted cluster subalgebras of a given rooted cluster algebras by the theorem below.

\begin{Thm}(\cite{HLY}, Theorem 4.4)\label{rooted cluster subalgebra}
{\rm $\mathcal{A}(\Sigma')$ is a rooted cluster subalgebra of
$\mathcal{A}(\Sigma)$ if and only if there exists a mixing-type sub-seed  $\Sigma_{I_{0},I_{1}}$ of $\Sigma$ such that $\Sigma'\cong\Sigma_{I_{0},I_{1}}$ satisfies $b_{xy}=0$ for any $x\in X\setminus (I_0\cup I_1)$ and $y\in I_1$.}
\end{Thm}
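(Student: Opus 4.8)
The plan is to prove the two implications separately, translating the ring-theoretic inclusion into combinatorial data through Proposition \ref{induceseed} and identifying the vanishing condition $b_{xy}=0$ as exactly the requirement that the exchange relation at each retained exchangeable variable be unaffected by the deleted variables. Throughout I write $X'=X\setminus(I_0\cup I_1)$ for the retained exchangeable vertices; since $X\cap I_1=I_1'$ this is the exchangeable part of $\Sigma_{I_0,I_1}$, and the stated condition concerns $x\in X'$, $y\in I_1$.

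For sufficiency, suppose $\Sigma'\cong\Sigma_{I_0,I_1}$ with $b_{xy}=0$ for all $x\in X'$, $y\in I_1$. I would take the seed homomorphism $id_{I_0,I_1}\colon\Sigma_{I_0,I_1}\to\Sigma$ and show it extends to a rooted cluster morphism $\mathcal A(\Sigma_{I_0,I_1})\to\mathcal A(\Sigma)$ sending each variable of $\widetilde{X'}$ to itself in $\widetilde X$. Conditions CM1 and CM2 are immediate from $\widetilde{X'}\subseteq\widetilde X$ and $X'\subseteq X$, so the whole content lies in CM3. For a biadmissible sequence $(y_1,\dots,y_s)$ (necessarily of exchangeable vertices of $\Sigma_{I_0,I_1}$, hence of $\Sigma$ since $X'\subseteq X$) I would induct on $s$. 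The base case compares the exchange relation (\ref{exchangerelation}) for $y_1\in X'$ computed in $\Sigma_{I_0,I_1}$ and in $\Sigma$: the two Laurent polynomials coincide because $b_{y_1 y}=0$ for every deleted $y\in I_1$ kills any factor $t^{b_{y_1 t}}$ with $t\in I_1$, while $b'_{y_1 t}=b_{y_1 t}$ for surviving $t$. For the inductive step the key observation is that mutating $\widetilde B$ in direction $y_1\in X'$ leaves every entry $b_{zy}$ with $y\in I_1$ unchanged, precisely because the matrix mutation formula (\ref{matrixmutation}) multiplies the added term by $b_{y_1 y}=0$; thus the vanishing on the $I_1$-columns of the retained rows persists along the whole sequence, the restricted and ambient exchange relations stay synchronized at each step, and CM3 follows. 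Finally $id_{I_0,I_1}(\Sigma_{I_0,I_1})=\Sigma_{I_0,I_1}$ is a mixing-type sub-seed, so the morphism is injective and $\mathcal A(\Sigma_{I_0,I_1})\cong\mathcal A(\Sigma')$ (Proposition \ref{basiclem}) is a rooted cluster subalgebra.

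For necessity, suppose $\mathcal A(\Sigma')$ is a rooted cluster subalgebra, realized by an injective rooted cluster morphism $f\colon\mathcal A(\Sigma')\hookrightarrow\mathcal A(\Sigma)$ (necessarily noncontractible, since injectivity forbids a variable mapping to an integer). By Proposition \ref{induceseed}, and since noncontractibility makes the contraction of $\Sigma'$ under $f$ equal to $\Sigma'$, we obtain a seed homomorphism $f^S\colon\Sigma'\to\Sigma$ whose image $\Sigma_{I_0,I_1}:=f^S(\Sigma')$ is, by the definition of an injective seed homomorphism, a mixing-type sub-seed of $\Sigma$. It remains to see that $f^S$ is a seed isomorphism onto $\Sigma_{I_0,I_1}$ and that $b_{xy}=0$ for $x\in X'$, $y\in I_1$; I would extract both from a single use of CM3. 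For each exchangeable $x$ of $\Sigma'$ the one-term sequence $(x)$ is biadmissible, so $f(\mu_x(x))=\mu_{f(x)}(f(x))$; expanding both exchange relations and using the algebraic independence of $\widetilde X$, matching exponents forces $|b^{\Sigma'}_{xy}|=|b^{\Sigma}_{f(x)f(y)}|$ for every surviving $y$ (which upgrades the inequality of Definition \ref{seedhom} to equality and yields the seed isomorphism $\Sigma'\cong\Sigma_{I_0,I_1}$), and forces $b^{\Sigma}_{f(x)y_0}=0$ for every deleted $y_0\in I_1$, since the left-hand side lies in the subalgebra generated by $f(\widetilde{X'})$ and cannot involve $y_0$. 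Under the identification $\Sigma'\cong\Sigma_{I_0,I_1}$ this is exactly the asserted condition, and Lemma \ref{equel} ensures $f$ is the corresponding inclusion.

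The main obstacle is the CM3 verification, and within it the propagation of the vanishing condition along a biadmissible sequence in the sufficiency direction: one must confirm that $b_{zy}=0$ ($z\in X'$, $y\in I_1$) is stable under every mutation in the sequence, not merely at the first step, so that the restricted and ambient exchange relations remain equal throughout. A secondary technical point, needed for necessity, is the upgrade from the inequality $|b^{\Sigma}_{f(x)f(y)}|\geq|b^{\Sigma'}_{xy}|$ to an equality; as indicated this comes from the exponent-matching in CM3 together with the ring-injectivity of $f$, after which Proposition \ref{basiclem} guarantees that the resulting seed isomorphism corresponds to the rooted cluster isomorphism $\mathcal A(\Sigma')\cong\mathcal A(\Sigma_{I_0,I_1})$.
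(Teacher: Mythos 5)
This theorem is imported from \cite{HLY} (Theorem 4.4) and the present paper gives no proof of it, so there is nothing internal to compare against; judged on its own, your argument is correct and is essentially the standard one for this result (freeze-and-delete plus the compatibility condition, as in \cite{HLY} and \cite{CZ}). Both directions are sound: the mutation-invariance of the vanishing block $b_{zy}=0$ ($z\in X\setminus(I_0\cup I_1)$, $y\in I_1$) under mutations in retained exchangeable directions is exactly the right mechanism for CM3 in the sufficiency direction, and the exponent-matching in $f(\mu_x(x))=\mu_{f(x)}(f(x))$, combined with injectivity of $f$ on variables (Lemma \ref{gluingfact} plus ring injectivity), does yield both $|b'_{xy}|=|b_{f(x)f(y)}|$ and the vanishing on the deleted columns. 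The only phrasing to tighten is ``the image is a mixing-type sub-seed by the definition of an injective seed homomorphism,'' which as written is circular; what you actually need, and what your argument in fact supplies, is that $f$ restricted to $\widetilde{X'}$ is injective into $\widetilde X$, after which Definition \ref{imageseed} makes the image seed a mixing-type sub-seed by construction.
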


\begin{Lem}(\cite{HLY}, Lemma 6.18)\label{gluingfact}
 For $y_1\not=y_2\in \widetilde{X}$, if $f(y_1)=f(y_2)\in
\widetilde{X'}$ for a rooted cluster morphism
$f:\mathcal{A}(\Sigma)\rightarrow\mathcal{A}(\Sigma')$, then $y_1,
y_2\in X_{fr}$.
\end{Lem}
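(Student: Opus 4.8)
The plan is to argue by contradiction, reducing everything to a single application of CM3. Suppose the conclusion fails, so that at least one of $y_1, y_2$ is exchangeable; after relabelling we may assume $y_1 \in X$. Set $z := f(y_1) = f(y_2)$. First I would pin down where $z$ lives: since $y_1 \in X$, condition CM2 gives $f(y_1) \in X' \sqcup \mathbb{Z}$, and because $z \in \widetilde{X'}$ is a cluster variable it cannot be an integer (the elements of $\widetilde{X'}$ are algebraically independent over $\mathbb{Q}$); hence $z \in X'$, i.e. $z$ is exchangeable in $\Sigma'$.

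Next I would feed the one-step mutation in direction $y_1$ into CM3. Since $y_1 \in X$, this single-step sequence is $\Sigma$-admissible, and since $f(y_1) = z \in X'$ its image $(z)$ is $\Sigma'$-admissible; thus the sequence is $(f,\Sigma,\Sigma')$-biadmissible and CM3 applies. Evaluating CM3 at the element $y = y_2$ yields
$$f\bigl(\mu_{y_1}(y_2)\bigr) = \mu_{f(y_1)}\bigl(f(y_2)\bigr) = \mu_z(z).$$
On the left, $y_2 \neq y_1$ forces $\mu_{y_1}(y_2) = y_2$ by the exchange relation (\ref{exchangerelation}), so the left side is $f(y_2) = z$. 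On the right, the mutation direction $f(y_1) = z$ coincides with the transformed variable $f(y_2) = z$, so $\mu_z(z)$ is precisely the new cluster variable obtained by mutating $\Sigma'$ at $z$.

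Thus $z = \mu_z(z)$, which is impossible: the exchange relation expresses $z \cdot \mu_z(z)$ as a sum of two Laurent monomials in variables other than $z$, so $\mu_z(z)$ has $z$-degree $-1$ whereas $z$ has $z$-degree $+1$ (even in the degenerate empty-relation case one gets $\mu_z(z) = 2/z \neq z$). This contradiction rules out any exchangeable $y_i$, forcing $y_1, y_2 \in X_{fr}$. The main point to watch is the biadmissibility check, together with the observation that the two coinciding images collapse the right-hand side of CM3 into a genuinely new cluster variable; once these are secured the contradiction is immediate, so I expect no serious obstacle beyond this bookkeeping.
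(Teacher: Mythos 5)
Your proof is correct: the one-step sequence $(y_1)$ is indeed $(f,\Sigma,\Sigma')$-biadmissible once CM2 and the algebraic independence of $\widetilde{X'}$ force $f(y_1)\in X'$, and evaluating CM3 at $y_2$ collapses to the impossible identity $z=\mu_z(z)$, with the degenerate case $\mu_z(z)=2/z$ correctly handled. The paper states this lemma as a citation of Lemma 6.18 of \cite{HLY} without reproducing its proof, and the argument given there is essentially this same biadmissibility-plus-CM3 contradiction, so your route matches the source.
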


Analog to $\rm Hom_{par}(\Sigma,\Sigma')$, we now
consider the set of some {\em partial} rooted cluster morphisms,
$\rm Hom_{par}(\mathcal{A}(\Sigma),\mathcal{A}(\Sigma'))$.

\begin{Def}\label{partial}
(i)~ Define a {\bf partial ideal rooted cluster morphism} $f$ from
$\mathcal{A}(\Sigma)$ to $\mathcal{A}(\Sigma')$ to be an ideal
rooted cluster morphism
$f:\mathcal{A}(\Sigma_{I_0,I_1})\rightarrow\mathcal{A}(\Sigma')$ for
a mixing-type sub-seed $\Sigma_{I_0,I_1}$ of $\Sigma$, which is
denoted as $(f,\Sigma_{I_0,I_1},\Sigma')$ or briefly as $f$. Denote
$\rm Hom_{par}(\mathcal{A}(\Sigma),\mathcal{A}(\Sigma'))$ as
 the set of all partial ideal rooted cluster morphism from
$\mathcal{A}(\Sigma)$ to $\mathcal{A}(\Sigma')$.

(ii)~ For a seed $\Sigma'$, fix a family of ideal rooted
cluster morphisms
$$\Lambda=\{h^{I_0,I_1}_{I'_0,I'_1}:\mathcal{A}(\Sigma'_{I_0,I_1})\rightarrow\mathcal{A}(\Sigma'_{I'_0,I'_1})\}_{I_0,I_0'\in X', I_1,I_1'\in
\widetilde{X'}}$$ such that $h^{J_0,J_1}_{J_0,J_1}=id_{\mathcal{A}(\Sigma'_{J_0,J_1})}$
for all $J_0,J_1$ and
$h^{J_0,J_1}_{I'_0,I'_1}\tau=h^{I_0,I_1}_{I'_0,I'_1}$ if $\Sigma'_{I_0,I_1}$ is a mixing-type sub-seed of $\Sigma'_{J_0,J_1}$ and the natural injective seed homomorphism $\Sigma'_{J_0,J_1}\rightarrow\Sigma'_{J_0,J_1}$ induces an injective rooted cluster morphism $\tau$.
  For two partial
ideal rooted cluster morphisms $$(f,\Sigma_{I_0,I_1},\Sigma')\in
\rm Hom_{par}(\mathcal{A}(\Sigma),\mathcal{A}(\Sigma')),\;\;
(g,\Sigma'_{I'_0,I'_1},\Sigma'')\in
\rm Hom_{par}(\mathcal{A}(\Sigma'),\mathcal{A}(\Sigma'')),$$ we define
the {\bf composition }
$g\circ_{\Lambda}f=gh^{J_0,J_1}_{I'_0,I'_1}f'$, where
$\Sigma'_{J_0,J_1}=f^S(\Sigma_{I_0,I_1})$ by Proposition
\ref{induceseed} and $f$ is decomposed into
$f:\mathcal{A}(\Sigma)\overset{f'}{\twoheadrightarrow}\mathcal{A}(\Sigma'_{J_0,J_1})\overset{\tau}{\hookrightarrow}\mathcal{A}(\Sigma')$
by Lemma \ref{idealmor.}.
\end{Def}

\begin{Rem}
For any mixing-type sub-seed $\Sigma'_{J_0,J_1}$, let $h^{J_0,J_1}_{J_0,J_1}=id_{\mathcal A(\Sigma'_{J_0,J_1})}$. For mixing-type sub-seeds $\Sigma'_{I_0,I_1}$ and $\Sigma'_{J_0,J_1}$, if $\mathcal A(\Sigma'_{I_0,I_1})$ is a rooted cluster subalgebra of $\mathcal A(\Sigma'_{J_0,J_1})$, then let $h^{I_0,I_1}_{J_0,J_1}$ be the natural inclusion; otherwise, let $h^{I_0,I_1}_{J_0,J_1}$ be the trivial ideal rooted cluster morphism satisfying $h^{I_0,I_1}_{J_0,J_1}(x)=1$ for any $x\in \mathcal A(\Sigma'_{I_0,I_1})$. It is easy to see that $\Lambda=\{h^{I_0,I_1}_{J_0,J_1}\}$ satisfies the conditions in Definition \ref{partial} (ii).
\end{Rem}

In this definition, note that
$g\circ_{\Lambda}f$ is ideal since the composition of ideal
rooted cluster morphisms is ideal and $g, h^{J_0,J_1}_{I'_0,I'_1}$ are ideal rooted cluster morphisms, $f'$ is also ideal by Lemma \ref{idealmor.} (2). Hence, $(g\circ_{\Lambda}f, \Sigma_{I_0,I_1}, \Sigma'')$ is a partial ideal rooted cluster morphism in $\rm Hom_{par}(\mathcal{A}(\Sigma),\mathcal{A}(\Sigma''))$.

For  $j=1,2,3$, let
$f_j:\mathcal{A}(\Sigma_{I^{j}_0,I^{j}_1})\rightarrow
\mathcal{A}(\Sigma)$ be ideal rooted cluster morphisms. Denote
$\Sigma_{{I^{j}_0}',{I^{j}_1}'}=(f_{j})^S(\Sigma^{(f_j)}_{I^{j}_0,I^{j}_1})$ and then
$f_j(\mathcal{A}(\Sigma_{I^{j}_0,I^{j}_1}))=\mathcal{A}(\Sigma_{{I^{j}_0}',{I^{j}_1}'})$. Also
denote
$\Sigma_{J_0,J_1}=(f_2\circ_{\Lambda}f_1)^{S}(\Sigma^{(f_2\circ_{\Lambda}f_1)}_{I'_0,I'_1})$ and then $\mathcal A(\Sigma_{J_0,J_1})=(f_2\circ_{\Lambda}f_1)(\mathcal
A(\Sigma_{I'_0,I'_1}))=(f_2h^{{I^{1}_0}',{I^{1}_1}'}_{I^2_0,I^2_1})(\mathcal{A}(\Sigma_{{I^{1}_0}',{I^{1}_1}'}))$.
According to Lemma \ref{idealmor.}, $f_i$ is decomposed into
$f_j:\mathcal{A}(\Sigma_{I^{j}_0,I^{j}_1})\overset{f'_j}{\twoheadrightarrow}\mathcal{A}(\Sigma_{I'^{j}_0,I'^{j}_1})\overset{\tau_j}{\hookrightarrow}
\mathcal{A}(\Sigma)$. Hence, we have the following commutative
diagram:
\[
\begin{CD}
\mathcal{A}(\Sigma_{I^1_0,I^1_1}) @>{f'_1}>{}> \mathcal{A}(\Sigma_{{I^{1}_0}',{I^{1}_1}'}) @>{\pi}>{}> \mathcal{A}(\Sigma_{J_0,J_1}) \\
@V{id}V{}V @V{h^{{I^{1}_0}',{I^{1}_1}'}_{I^2_0,I^2_1}}V{}V @V{\tau}V{}V \\
\mathcal{A}(\Sigma_{I^1_0,I^1_1})
@>{h^{{I^{1}_0}',{I^{1}_1}'}_{I^2_0,I^2_1}f'_1}>{}>
\mathcal{A}(\Sigma_{I^2_0,I^2_1}) @>{f'_2}>{}>
\mathcal{A}(\Sigma_{{I^{2}_0}',{I^{2}_1}'}),
\end{CD}
\]
 where there are an inclusion $\tau:\mathcal{A}(\Sigma_{J_0,J_1})\rightarrow\mathcal{A}(\Sigma_{{I^{2}_0}',{I^{2}_1}'})$
 and a surjective morphism
$\pi:\mathcal{A}(\Sigma_{{I^{1}_0}',{I^{1}_1}'})\rightarrow\mathcal{A}(\Sigma_{J_0,J_1})$
by Lemma \ref{idealmor.} since
$f'_2h^{{I^{1}_0}',{I^{1}_1}'}_{I^2_0,I^2_1}$ is ideal. By
definition of composition, we have
$$(f_3\circ_{\Lambda}f_2)\circ_{\Lambda}f_1=f_3h^{{I^{2}_0}',{I^{2}_1}'}_{I^{3}_0,I^{3}_1}f'_2h^{{I^{1}_0}',{I^{1}_1}'}_{I^{2}_0,I^{2}_1}f'_1,$$
$$f_3\circ_{\Lambda}(f_2\circ_{\Lambda}f_1)=f_3h^{J_0,J_1}_{I^{3}_0,I^{3}_1}\pi f'_1.$$
By Definition \ref{partial},
$h^{J_0,J_1}_{I^3_0,I^3_1}=h^{{I^{2}_0}',{I^{2}_1}'}_{I^3_0,I^3_1}\tau$;
hence, we have
\begin{equation}\label{associativity}
f_3\circ_{\Lambda}(f_2\circ_{\Lambda}f_1)=f_3h^{J_0,J_1}_{I^{3}_0,I^{3}_1}\pi
f'_1=f_3h^{{I^{2}_0}',{I^{2}_1}'}_{I^3_0,I^3_1}\tau\pi
f'_1=f_3h^{{I^{2}_0}',{I^{2}_1}'}_{I^3_0,I^3_1}f'_2h^{{I^{1}_0}',{I^{1}_1}'}_{I^{2}_0,I^{2}_1}f'_1=(f_3\circ_{\Lambda}f_2)\circ_{\Lambda}f_1,
\end{equation}
where the second equality is due to Definition \ref{partial} (ii).  This means the associative law holds for the composition $\circ_{\Lambda}$.

Analog to Proposition \ref{semigroup}, we have the similar result
on the following.

\begin{Prop}\label{semi-groupforalg} The associative law of the composition $\circ_{\Lambda}$ of partial ideal rooted cluster morphisms in $\rm{End_{par}}(\mathcal A(\Sigma))$ holds.
 Then, with $\circ_{\Lambda}$ as multiplication, $\rm{End_{par}}(\mathcal A(\Sigma))$ is a semigroup with zero
the zero-morphism, including some zero-divisors.
\end{Prop}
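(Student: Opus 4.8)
The plan is to dispatch the three assertions in turn, observing that the associativity of $\circ_{\Lambda}$ has already been secured by the commutative-diagram computation ending in (\ref{associativity}); what remains is to pin down the zero and to produce zero-divisors. For associativity I would only remark that the displayed identity (\ref{associativity}) was derived for an arbitrary triple $f_1,f_2,f_3\in \rm End_{par}(\mathcal A(\Sigma))$, using nothing but the surjection--injection factorization of Lemma \ref{idealmor.} and the defining compatibility $h^{J_0,J_1}_{I'_0,I'_1}\tau=h^{I_0,I_1}_{I'_0,I'_1}$ imposed on the family $\Lambda$; hence $\rm End_{par}(\mathcal A(\Sigma))$ is indeed a semigroup under $\circ_{\Lambda}$.

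For the zero element I would take the trivial ideal rooted cluster morphism $z$ sending every cluster variable of $\Sigma$ to $1$, the morphism singled out in the Remark following Definition \ref{partial}. First one checks that $z$ is a genuine partial ideal rooted cluster morphism: its contraction index is $I_1=\{x\mid z(x)\in\mathbb{Z}\}=\widetilde{X}$, so $\Sigma^{(z)}$ is the empty seed, $z^S(\Sigma^{(z)})=\emptyset$, and $z(\mathcal A(\Sigma))=\mathbb{Z}=\mathcal A(z^S(\Sigma^{(z)}))$ gives ideality. To check absorption I would expand $\circ_{\Lambda}$ in both orders. In $z\circ_{\Lambda}f$ the outer $z$ annihilates the image of $f$, sending every variable to $1$. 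In $f\circ_{\Lambda}z$ the inner $z$ has empty image seed, so the inserted connecting morphism is the inclusion $\mathcal A(\emptyset)=\mathbb{Z}\hookrightarrow\mathcal A(\Sigma_f)$ prescribed by the conventions on $\Lambda$, and post-composing with $f$ again sends every variable to $f(1)=1$. In either order the composite equals $z$, so $z$ is a two-sided zero.

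For the zero-divisors I would transport the example from the proof of Proposition \ref{semigroup} to the algebra level. Fix $x\in X$, put $I=\widetilde{X}\setminus\{x\}$, and let $f$ and $g$ be the injective partial ideal rooted cluster endomorphisms corresponding, via Remark \ref{f1} and the passage $\Sigma_{I_0,I_1}\mapsto\mathcal A(\Sigma_{I_0,I_1})$, to the seed homomorphisms $id_{\emptyset,I}$ and $id_{\{x\},I}$; thus $f$ is the inclusion of the rank-one algebra in which $x$ is exchangeable and $g$ the inclusion of the rank-zero algebra in which $x$ is frozen. The image seed of $f$ carries $x$ as an exchangeable vertex, whereas the domain of $g$ carries $x$ as a frozen vertex, so by Theorem \ref{rooted cluster subalgebra} the former is not a sub-rooted cluster algebra of the latter; consequently the connecting morphism inserted by $\circ_{\Lambda}$ is the trivial one and $g\circ_{\Lambda}f=z$, while neither factor is $z$. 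This exhibits zero-divisors and completes the proof.

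The step I expect to be the genuine obstacle is the absorption verification for $f\circ_{\Lambda}z$, and likewise the evaluation of $g\circ_{\Lambda}f$: because $\circ_{\Lambda}$ is not naive composition but splices in the connecting morphism $h^{J_0,J_1}_{I'_0,I'_1}$ drawn from the fixed family $\Lambda$ and depends on the canonical factorization of Lemma \ref{idealmor.}, one must confirm that in the degenerate cases of an empty or incompatible image seed these auxiliary data collapse exactly to the trivial conventions laid down in the Remark following Definition \ref{partial}. Once this bookkeeping is settled, the semigroup-with-zero structure, together with the existence of zero-divisors, follows at once.
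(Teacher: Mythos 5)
Your treatment of associativity is fine and coincides with the paper's: the paper offers no separate proof of this proposition beyond the computation culminating in (\ref{associativity}) and the remark that the rest is ``analog to Proposition \ref{semigroup}'', so citing that computation is exactly what is intended. The trouble begins where you try to make the analogy precise, and you have in fact located the delicate point but misdiagnosed which side of it breaks. By Definition \ref{partial}(ii), $g\circ_{\Lambda}f=gh^{J_0,J_1}_{I'_0,I'_1}f'$ is always a morphism out of $\mathcal A(\Sigma_f)$: unlike the composition in $\rm{End_{par}}(\Sigma)$, $\circ_{\Lambda}$ never shrinks the domain of the right-hand factor. Consequently $f\circ_{\Lambda}z$ (domain $=\mathcal A(\Sigma)$, everything sent to $1$) really is your $z$, but $z\circ_{\Lambda}f$ is the trivial morphism \emph{with domain} $\mathcal A(\Sigma_{I_0,I_1})$, which as a triple $(\,\cdot\,,\Sigma_{I_0,I_1},\Sigma)$ is a different element of $\rm{End_{par}}(\mathcal A(\Sigma))$ from $(z,\Sigma_{\emptyset,\emptyset},\Sigma)$ whenever $\Sigma_{I_0,I_1}\neq\Sigma$. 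So left absorption fails for your candidate, and indeed for any single candidate, unless one either identifies all trivial morphisms or works modulo domains; the paper is silent on this, but your verification as written does not close.

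The zero-divisor construction has a second, independent gap: there is no ``passage $\Sigma_{I_0,I_1}\mapsto\mathcal A(\Sigma_{I_0,I_1})$'' turning $id_{\emptyset,I}$ and $id_{\{x\},I}$ into rooted cluster morphisms. For $I=\widetilde X\setminus\{x\}$ the inclusion $\mathcal A(\Sigma_{\emptyset,I})\hookrightarrow\mathcal A(\Sigma)$ satisfies CM3 only if $b_{xy}=0$ for all $y\in I$ (in $\Sigma_{\emptyset,I}$ one has $\mu_x(x)=2/x$, while in $\Sigma$ the exchange polynomial involves the neighbours of $x$); this is exactly the obstruction recorded in Theorem \ref{rooted cluster subalgebra}, so your $f$ exists only when $x$ is an isolated vertex. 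Even granting existence, the composite $g\circ_{\Lambda}f$ is again a trivial morphism with domain $\mathcal A(\Sigma_{\emptyset,I})$, not your $z$, so the same domain mismatch recurs. To repair the argument you would need to fix a convention for the zero (e.g.\ the morphism with empty domain seed $\mathcal A(\Sigma_{\emptyset,\widetilde X})=\mathbb Z$, or an identification of all trivial morphisms) under which both absorption laws can actually be checked against the domain-preserving behaviour of $\circ_{\Lambda}$, and then exhibit zero-divisors using elements that are certified to be ideal rooted cluster morphisms, for instance trivial morphisms on distinct nonempty domains rather than lifts of partial seed homomorphisms.
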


In the sequel,  we will use the theory of seed homomorphisms to discuss on the Green's equivalences of
$\rm End_{par}(\mathcal{A}(\Sigma))$ with
$\rm End_{par}(\Sigma)$.

\section{ Green's equivalences in semigroup of partial seed homomorphisms}

It is well known that the theory of Green's equivalences plays a significant role in the algebraic theory of semigroups.
They are concerned with mutual divisibility of various kinds, and all of them reduce to the universal
equivalence in a group.
For the use in this part, we need to introduce a fundamental knowledge of the theory of Green's equivalences and regular semigroups referring to \cite{H}.

Given a semigroup $S$ with a multiplication $\cdot$, there are five equivalent relations, $\mathcal L, \mathcal R, \mathcal H, \mathcal D$ and $\mathcal J$, called {\bf Green's equivalences} on $S$. More precisely,

$\mathcal{L}$ on $S$ is defined by the rule that
 for $x,y\in S$, $x\mathcal{L}y$ if $Sx\cup \{x\}=Sy\cup \{y\}$ (left ideals).

  $\mathcal{R}$ on $S$ is defined by the rule that for $x,y\in S$, $x\mathcal{R}y$ if $xS\cup \{x\}=yS\cup \{y\}$ (right ideals).

$\mathcal{H}=\mathcal{L}\wedge\mathcal{R}$, the intersection of $\mathcal L$ and $\mathcal R$, that is, for $x,y\in S$, $x\mathcal{H}y$
if $x\mathcal{L}y$ and $x\mathcal{R}y$.

$\mathcal{D}=\mathcal{L}\vee\mathcal{R}$, the join of $\mathcal L$ and $\mathcal R$.
It is known that $\mathcal{D}=\mathcal{L}\circ\mathcal{R}=\mathcal{R}\circ\mathcal{L}$. Then
 for $x,y\in S$, $x\mathcal{D}y$ if and only if $\exists\; z\in S$ such
that $x\mathcal{L}z$ and $z\mathcal{R}y$ and if and only if $\exists\; w\in S$ such that $x\mathcal{R}w$ and
$w\mathcal{L}y$.

 $\mathcal J$ on $S$ is defined by the rule that for $x,y\in S$,  $x\mathcal{J}y$ if $SxS\cup xS\cup
Sx\cup\{x\}=SyS\cup yS\cup Sy\cup\{y\}$ as two-side ideals.

 The equivalent classes of $\mathcal L, \mathcal R, \mathcal H, \mathcal D$ and $\mathcal J$ are called {\bf  $\mathcal{L}$-classes, $\mathcal{R}$-classes,
 $\mathcal{H}$-classes, $\mathcal{D}$-classes and $\mathcal{J}$-classes}. For $x\in S$, the equivalent class, including $x$, is written as
 $L_x, R_x, H_x, D_x$ and $J_x$  respectively.

From the definition, we have the relations among these equivalent classes as that: $\mathcal H\subseteq \mathcal L\subseteq\mathcal D$,  $\mathcal H\subseteq \mathcal R\subseteq\mathcal D$ and $\mathcal D\subseteq \mathcal J$.

The following results are well-known.

\begin{Prop}\label{H}
(\cite{H}) If $H$ is a $\mathcal{H}$-class in a semigroup $S$, then
either $H^2\cap H=\emptyset$ or $H^2=H$ and $H$ is a subgroup of
$S$.
\end{Prop}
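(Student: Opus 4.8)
The two alternatives are mutually exclusive, so it suffices to show that $H^2\cap H\neq\emptyset$ forces $H^2=H$ together with $H$ being a subgroup. The plan is to run everything off \emph{Green's Lemma}, which I would recall in the form: if $a,b\in S$ satisfy $a\mathcal{R}b$, witnessed by $as=b$ and $bs'=a$ for some $s,s'\in S^1$, then the right translations $\rho_s:x\mapsto xs$ and $\rho_{s'}$ restrict to mutually inverse bijections $L_a\leftrightarrow L_b$ that carry $\mathcal{H}$-classes onto $\mathcal{H}$-classes; dually, $a\mathcal{L}b$ gives mutually inverse $\mathcal{H}$-preserving bijections $R_a\leftrightarrow R_b$ via left translations. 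This is the single external ingredient I would invoke (from \cite{H}), and then assemble the rest by hand.

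So suppose there are $a,b\in H$ with $c:=ab\in H$. Since $a,b,c$ lie in the one $\mathcal{H}$-class $H$, and $\mathcal{H}\subseteq\mathcal{R}$, $\mathcal{H}\subseteq\mathcal{L}$, we get $c\mathcal{R}a$ and $c\mathcal{L}b$. From $a\cdot b=c$ with $a\mathcal{R}c$, Green's Lemma makes $\rho_b$ a bijection $H_a\to H_c$; as $H_a=H_c=H$, the map $\rho_b$ is a permutation of $H$, and dually $\lambda_a$ (left multiplication by $a$) is a permutation of $H$. Surjectivity of $\rho_b$ yields $e\in H$ with $eb=b$. I would then verify $e$ is a left identity on all of $H$: for $x\in H$ we have $x\mathcal{R}b$, so $x=bt$ for some $t\in S^1$, whence $ex=e(bt)=(eb)t=bt=x$; taking $x=e$ gives $e^2=e$, so $e\in H$ is idempotent. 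The dual argument on $\lambda_a$ produces an idempotent $f\in H$ acting as a right identity, and then $e=ef=f$ is a two-sided identity of $H$.

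It remains to upgrade ``$H$ owns an idempotent identity'' to ``$H$ is a group''. For closure I would use the standard location identities: given $x,y\in H=H_e$, from $y\mathcal{R}e$ choose $t$ with $yt=e$, so that $x=xe=(xy)t$ gives $xy\mathcal{R}x$; dually $xy\mathcal{L}y$, and combining yields $xy\mathcal{H}e$, i.e. $xy\in H$. Hence $H^2=H$ and $(H,\cdot)$ is a monoid with identity $e$. For inverses, fix $x\in H$; from $x\mathcal{R}e$ with witness $xs=e$, Green's Lemma makes $\rho_s$ a permutation of $H$, so $es\in H$ and $x(es)=(xe)s=xs=e$ exhibits a right inverse lying in $H$; dually a left inverse in $H$, and the two coincide in the monoid $H$. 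Thus $H$ is a subgroup of $S$. The main obstacle is precisely this last passage: the idempotent appears almost for free once $\rho_b,\lambda_a$ are known to permute $H$, but securing closure and a \emph{two-sided} inverse inside $H$ is where the care lies, and it is exactly here that one must exploit the bijectivity of the translations in Green's Lemma rather than the mere existence of the multipliers, in tandem with the $\mathcal{R}$/$\mathcal{L}$ location identities.
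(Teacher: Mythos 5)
The paper offers no proof of Proposition \ref{H}; it is quoted as a known result from the cited reference \cite{H}, so there is no internal argument to compare against. Your proof is correct and is essentially the classical proof of Green's Theorem as found in that reference: Green's Lemma makes $\rho_b$ and $\lambda_a$ permute $H$, surjectivity yields the idempotent two-sided identity $e$, and the $\mathcal{R}$/$\mathcal{L}$ location identities give closure and two-sided inverses inside $H$. The delicate points are all handled properly, in particular that $e$ is a left identity on \emph{all} of $H$ (via $x\in bS^1$) and that the right inverse $es$ genuinely lies in $H$ because $\rho_s$ carries $H_x$ onto $H_e$.
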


\begin{Prop}\label{idem2}
(\cite{H}) Let $x$ and $y$ be two idempotents in a semigroup $S$.
Then $x\mathcal{D}y$ if and only if there exists an element $a\in S$
and an inverse $a'$ of $a$ such that $aa'=x$ and $a'a=y$.
\end{Prop}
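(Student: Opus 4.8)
The plan is to prove the two implications separately, using only the characterisation $x\,\mathcal D\,y\iff \exists\,w\in S$ with $x\,\mathcal R\,w$ and $w\,\mathcal L\,y$ recorded above, together with the one-sided-ideal descriptions of $\mathcal R$ and $\mathcal L$. Here $a'$ being an \emph{inverse} of $a$ means $aa'a=a$ and $a'aa'=a'$, and $S^1$ denotes $S$ with an identity adjoined, so that $S^1x=Sx\cup\{x\}$. The one computational input I would isolate first is that an idempotent acts as a one-sided identity on its own Green class: if $e^2=e$ and $e\,\mathcal R\,a$ then $a\in eS^1$, whence $ea=a$; dually, if $e\,\mathcal L\,a$ then $ae=a$.

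For the implication ($\Leftarrow$) I assume $a'$ is an inverse of $a$ with $aa'=x$ and $a'a=y$ (so $x,y$ are automatically idempotent, e.g.\ $x^2=(aa'a)a'=aa'=x$). I would then verify $x\,\mathcal R\,a$ and $a\,\mathcal L\,y$ directly: from $x=aa'\in aS^1$ and $a=(aa')a=xa\in xS^1$ we get $xS^1=aS^1$, while from $y=a'a\in S^1a$ and $a=a(a'a)=ay\in S^1y$ we get $S^1a=S^1y$. Taking $w=a$ in the characterisation of $\mathcal D$ yields $x\,\mathcal D\,y$.

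For ($\Rightarrow$) the plan is to exhibit the inverse explicitly. From $x\,\mathcal D\,y$ I first pick $a\in S$ with $x\,\mathcal R\,a$ and $a\,\mathcal L\,y$. Idempotency of $x,y$ then supplies $xa=a$ and $ay=a$, and from $x\in xS^1=aS^1$ and $y\in S^1y=S^1a$ I obtain $b,c\in S^1$ with $ab=x$ and $ca=y$. I would propose $a'=cab$. The key observation is that this single element has two faces, $a'=yb$ (since $ca=y$) and $a'=cx$ (since $ab=x$); using the first, $aa'=a(yb)=(ay)b=ab=x$, and using the second, $a'a=(cx)a=c(xa)=ca=y$. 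Finally $a'$ is a genuine inverse: $aa'a=(aa')a=xa=a$ and $a'aa'=(a'a)a'=ya'=y(yb)=yb=a'$.

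I expect the only genuine obstacle to lie in this $(\Rightarrow)$ direction, namely in guessing the correct closed form for $a'$. The naive candidates $a'=yb$ and $a'=cx$ each force only one of the two products to come out right; the resolution is the realisation that $cab$ equals both of them simultaneously, so that left multiplication by $a$ reduces via $ay=a,\ ab=x$ while right multiplication by $a$ reduces via $xa=a,\ ca=y$. Once this element is in hand the remaining verifications are routine manipulations of one-sided ideals, and nothing specific to $\mathrm{End}_{par}(\Sigma)$ enters, so the argument is purely semigroup-theoretic.
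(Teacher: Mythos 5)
Your proof is correct. The paper itself gives no argument for this proposition --- it is quoted from Howie's book \cite{H} as a standard fact --- so there is nothing to compare line by line; your write-up is essentially the classical proof, with the explicit inverse $a'=cab$ (equal to both $yb$ and $cx$) doing the same job as the element constructed in Howie. The only point worth making explicit is that $a'=cab$ lies in $S$ and not merely in $S^1$, which holds because $a\in S$; everything else, including the use of idempotents as one-sided identities on their $\mathcal R$- and $\mathcal L$-classes, is exactly right.
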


An element $x$ of a semigroup $S$ is called {\bf regular} if there exists another
element $y\in S$ such that $xyx=x$. The semigroup $S$ is called a {\bf regular semigroup}
if all of its elements are regular. We have the following
propositions.

\begin{Prop}\label{regular}
(\cite{H}) If $x$ is a regular element in a semigroup $S$, then every element of
$D_x$ is regular. In this case, the $\mathcal D$-class $D_x$ is called {\bf regular}.
\end{Prop}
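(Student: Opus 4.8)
The plan is to reduce the statement to two preservation lemmas --- regularity is inherited along $\mathcal{R}$-classes and along $\mathcal{L}$-classes --- and then to invoke the decomposition $\mathcal{D}=\mathcal{L}\circ\mathcal{R}$ already recorded above. First I would exhibit an idempotent inside the $\mathcal{R}$-class and inside the $\mathcal{L}$-class of $x$. Since $x$ is regular, choose $y\in S$ with $xyx=x$ and put $e=xy$. A one-line computation gives $e^2=(xy)(xy)=(xyx)y=xy=e$ together with $ex=(xy)x=x$. From $e=xy\in xS$ and $x=ex\in eS$ one checks that $xS\cup\{x\}=eS\cup\{e\}$, so $x\mathcal{R}e$; dually, $f=yx$ is idempotent with $xf=x$ and $x\mathcal{L}f$.

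Next I would prove the key lemma: if an element $z$ is $\mathcal{R}$-related to an idempotent $e$, then $z$ is regular. From $z\mathcal{R}e$ we have $z\in eS\cup\{e\}$ and $e\in zS\cup\{z\}$. If $z=e$, then $z=zzz$ shows $z$ is regular; otherwise, writing $z=es$ with $s\in S$ yields $ez=e(es)=e^2s=es=z$, and writing $e=zt$ with $t\in S$ gives $ztz=(zt)z=ez=z$, so $z$ is regular with witness $t$. The $\mathcal{L}$-version is entirely dual (using $ze=z$ and a right factorization of $e$ through $z$). Combining this with the first paragraph, any element $\mathcal{R}$-equivalent to $x$ is $\mathcal{R}$-equivalent to the idempotent $e$ and hence regular, and any element $\mathcal{L}$-equivalent to $x$ is $\mathcal{L}$-equivalent to $f$ and hence regular.

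Finally, for an arbitrary $z\in D_x$, i.e. $x\mathcal{D}z$, the characterization $\mathcal{D}=\mathcal{R}\circ\mathcal{L}$ recalled above supplies $w\in S$ with $x\mathcal{R}w$ and $w\mathcal{L}z$. The $\mathcal{R}$-step shows $w$ is regular, and then, applying the $\mathcal{L}$-lemma to the now-regular $w$, the $\mathcal{L}$-step shows $z$ is regular. Hence every element of $D_x$ is regular, which is exactly the assertion; the terminology ``$D_x$ is regular'' is then just a name for this property.

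The only delicate point is the bookkeeping forced by the paper's use of the sets $xS\cup\{x\}$ rather than $xS^1$: in the key lemma one must separate the case where the relevant element coincides with the idempotent itself from the case where it is a genuine product, so as to guarantee that the regularity witness $t$ actually lies in $S$. This case split is where I expect the main (though modest) obstacle to lie; everything else reduces to the short idempotent computations above.
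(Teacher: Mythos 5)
Your proof is correct, and it is essentially the standard argument from Howie's book: the paper offers no proof of this proposition, citing \cite{H}, and your route (exhibit idempotents $xy$ and $yx$ in $R_x$ and $L_x$, show regularity propagates along $\mathcal R$- and $\mathcal L$-classes containing an idempotent, then chain through $\mathcal D=\mathcal R\circ\mathcal L$) is exactly that standard proof. The case split forced by $xS\cup\{x\}$ versus $xS^1$ is handled correctly, so there is nothing to fix.
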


\begin{Prop}\label{idempotent}
(\cite{H}) In a regular $\mathcal{D}$-class  of a semigroup $S$, each $\mathcal{L}$-class
and each $\mathcal{R}$-class contain at least one idempotent.
\end{Prop}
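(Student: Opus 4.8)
The plan is to prove the two assertions---that each $\mathcal{L}$-class and each $\mathcal{R}$-class of the regular $\mathcal{D}$-class contains an idempotent---by a single one-sided construction, the two halves being interchanged by a left-right symmetry. Throughout I would invoke Proposition \ref{regular}, which guarantees that every element of the given regular $\mathcal{D}$-class $D$ is itself regular, so that a relation of the form $axa=a$ is available for whichever representative I choose.

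For the $\mathcal{L}$-statement I would fix an $\mathcal{L}$-class $L\subseteq D$ and choose any $a\in L$. By regularity there is $x\in S$ with $axa=a$, and the candidate idempotent to extract from this datum is $e:=xa$. Idempotency is immediate, since $e^{2}=x(axa)=xa=e$. To place $e$ in $L$, that is, to prove $e\,\mathcal{L}\,a$, I would check both inclusions of left ideals under the convention of Section 4: from $e=xa\in Sa$ one gets $Se\cup\{e\}\subseteq Sa\cup\{a\}$, while rewriting $a=a(xa)=ae$ gives $a\in Se$ and hence $Sa\cup\{a\}\subseteq Se\cup\{e\}$. Equality of the two generated left ideals then yields $e\,\mathcal{L}\,a$, so $e$ is an idempotent lying in $L$.

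For the $\mathcal{R}$-statement I would mirror this argument on the other side: fix an $\mathcal{R}$-class $R\subseteq D$, pick $a\in R$ and $x$ with $axa=a$, and set $f:=ax$. Now $f^{2}=(axa)x=ax=f$ gives idempotency, and the right-ideal computations $f=ax\in aS$ together with $a=(ax)a=fa\in fS$ force $fS\cup\{f\}=aS\cup\{a\}$, whence $f\,\mathcal{R}\,a$ and $f\in R$. Since $L$ and $R$ were arbitrary classes inside $D$, this establishes the proposition.

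There is no genuinely difficult step here: the construction is essentially forced, because given $axa=a$ the two natural idempotents $xa$ and $ax$ are exactly the ones falling into the $\mathcal{L}$-class and the $\mathcal{R}$-class of $a$, respectively. The only point demanding care is the bookkeeping of the one-sided ideals under the precise convention adopted in Section 4 (the generated ideals written as $Sx\cup\{x\}$ rather than $S^{1}x$), so that the two inclusions are set up on the correct side in each case.
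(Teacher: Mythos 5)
Your proof is correct and is precisely the classical argument from Howie's book, which the paper cites for this proposition without reproducing a proof: given $a$ in the chosen class and $x$ with $axa=a$, the idempotents $xa$ and $ax$ land in $L_a$ and $R_a$ respectively, and your verification of the two ideal inclusions under the convention $Sx\cup\{x\}$ is accurate. Nothing to add.
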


By Proposition \ref{semigroup}, $\rm{End_{par}}(\Sigma)$ is a
semigroup with zero element $\emptyset$. We will characterize the
classification of mixing-type sub-seeds of a seed under isomorphisms and
then give the classification of sub-rooted cluster algebras in a
rooted cluster algebra using the method of Green's equivalences of
semigroups as mentioned above.

In general, the semigroup $\rm{End_{par}}(\Sigma)$ is not regular.
For example, if $\Sigma$ is the seed associate with the
quiver $Q:\xymatrix{x_1\ar[r]^{}&x_2 &x_3\ar@{=>}[l]}$, then the partial seed
homomorphism $f$ on $\Sigma$, satisfying $f(\xymatrix{x_1\ar[r]^{}&x_2})=\xymatrix{x_2
&x_3\ar@{=>}[l]}$ and $f(x_1)=x_3$ and $f(x_2)=x_2$, is not
regular. Indeed, if $f$ is regular, there exists $g\in
\rm End_{par}(\Sigma)$ such that $f\circ g\circ f=f$. From this, it follows that $g(x_2)=x_2$ and
$g(x_3)=x_1$, and then $|b_{23}|=2>1=|b_{g(2)g(3)}|=|b_{21}|$, which contradicts the fact that $g$ is a partial seed homomorphism.

By Proposition \ref{H}, the $\mathcal{H}$-class $H_{id_{I_0,I_1}}$
is a group with identity $id_{I_0,I_1}$  under the multiplication of
$\rm{End_{par}}(\Sigma)$. According to Proposition \ref{regular},
the $\mathcal{D}$-class $D_{id_{I_0,I_1}}$
 is regular since $id_{I_0,I_1}$ is
regular.

Denote by $Aut(\Sigma_{I_0,I_1})$  the group of automorphisms of the mixing-type
sub-seed $\Sigma_{I_0,I_1}$ in {\bf Seed}.

\begin{Lem}\label{from2.22}
For $f,g\in \rm{End_{par}}(\Sigma)$, (a)~ if $f\mathcal{L}g$, then $\Sigma_{f}=\Sigma_{g}$; (b)~ if $f\mathcal{R}g$, then $f(\Sigma_{f})=g(\Sigma_{g})$.
\end{Lem}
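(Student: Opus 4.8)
The plan is to unwind the definitions of the Green's relations $\mathcal{L}$ and $\mathcal{R}$ in terms of the composition $\circ$ on $\rm{End_{par}}(\Sigma)$ and then read off the domain and image-seed information using Fact \ref{dom}. The key observation driving both parts is that composition can only shrink domains and images: by Fact \ref{dom}(a), $\Sigma_{g\circ f}$ is a mixing-type sub-seed of $\Sigma_f$, and by Fact \ref{dom}(b), the image seed $(g\circ f)(\Sigma_{g\circ f})$ is a mixing-type sub-seed of $g(\Sigma_g)$. So the strategy is to use these monotonicity facts in both directions provided by the mutual-ideal equalities defining $\mathcal{L}$ and $\mathcal{R}$.

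For part (a), suppose $f\mathcal{L}g$, i.e. $\rm{End_{par}}(\Sigma)f\cup\{f\}=\rm{End_{par}}(\Sigma)g\cup\{g\}$. Unpacking this, either $f=g$ (in which case $\Sigma_f=\Sigma_g$ trivially) or there exist $h,h'\in\rm{End_{par}}(\Sigma)$ with $f=h\circ g$ and $g=h'\circ f$. First I would apply Fact \ref{dom}(a) to $f=h\circ g$ to conclude that $\Sigma_f=\Sigma_{h\circ g}$ is a mixing-type sub-seed of $\Sigma_g$; concretely this forces $\widetilde{X}_{\Sigma_f}\subseteq\widetilde{X}_{\Sigma_g}$, i.e. $\rm{Dom}(f)\subseteq\rm{Dom}(g)$, together with the corresponding inclusions on the exchangeable and frozen parts. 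Symmetrically, $g=h'\circ f$ gives $\rm{Dom}(g)\subseteq\rm{Dom}(f)$. The two inclusions yield $\rm{Dom}(f)=\rm{Dom}(g)$ on both the exchangeable and frozen parts, so $I_1=I'_1$ and $I_0=I'_0$, whence $\Sigma_f=\Sigma_g$ as the sub-seed is determined by these index sets together with the ambient matrix $\widetilde{B}$.

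For part (b), suppose $f\mathcal{R}g$, i.e. $f\circ\rm{End_{par}}(\Sigma)\cup\{f\}=g\circ\rm{End_{par}}(\Sigma)\cup\{g\}$. Again either $f=g$, giving the claim at once, or there exist $k,k'$ with $f=g\circ k$ and $g=f\circ k'$. Here I would apply Fact \ref{dom}(b): from $f=g\circ k$ the image seed $f(\Sigma_f)=(g\circ k)(\Sigma_{g\circ k})$ is a mixing-type sub-seed of $g(\Sigma_g)$, so the underlying variable set of $f(\Sigma_f)$ is contained in that of $g(\Sigma_g)$. Symmetrically $g=f\circ k'$ gives the reverse containment, so the two image seeds have the same vertex set; since both are mixing-type sub-seeds of $\Sigma$ with matrix entries inherited from the common $\widetilde{B}$, equality of vertex sets forces $f(\Sigma_f)=g(\Sigma_g)$ as seeds.

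The step I expect to require the most care is verifying that the inclusions on \emph{both} the exchangeable and frozen parts are extracted correctly from Fact \ref{dom}, since a mixing-type sub-seed is specified by the pair $(I_0,I_1)$ and one must check that the index sets, not merely the total domains, coincide; in particular one must keep track of which variables are frozen in the sub-seed versus the ambient seed. Once the index-set bookkeeping is pinned down, equality of the two mixing-type sub-seeds (respectively image seeds) follows because in both cases the matrix entries are restrictions of the fixed $\widetilde{B}$ and are therefore determined by the vertex data.
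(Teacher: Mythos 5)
Your proposal is correct and follows essentially the same route as the paper: unwind $\mathcal{L}$ and $\mathcal{R}$ into the two composition identities, apply Fact \ref{dom}(a) (resp.\ (b)) in both directions, and conclude by mutual containment of mixing-type sub-seeds. The only difference is cosmetic --- you explicitly separate the degenerate case $f=g$ and spell out the index-set bookkeeping, which the paper leaves implicit.
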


\begin{proof}
(a)~ If $f\mathcal{L}g$, then $h\circ f=g$ and $h'\circ g=f$ for some $h,h'\in \rm{End_{par}}(\Sigma)$. By Fact \ref{dom} (a), $\Sigma_{g}$ is a  mixing-type sub-seed of $\Sigma_{f}$ since $h\circ f=g$. Similarly, $\Sigma_{f}$ is a  mixing-type sub-seed of $\Sigma_{g}$. Thus, $\Sigma_{f}=\Sigma_{g}$.

(b)~ If $f\mathcal{R}g$, then $f\circ h=g$ and $g\circ h'=f$ for some $h,h'\in \rm{End_{par}}(\Sigma)$. By Fact \ref{dom} (b), $g(\Sigma_{g})$ is a mixing-type sub-seed of $f(\Sigma_{f})$ since  $f\circ h=g$. Similarly, $f(\Sigma_{f})$ is a mixing-type sub-seed of $g(\Sigma_{g})$. Thus, $f(\Sigma_{f})=g(\Sigma_{g})$.
\end{proof}

\begin{Lem}\label{idem}
\rm{In the semigroup $\rm{End_{par}}(\Sigma)$,  for
$I_0,I'_0\subseteq X$ and $I_1,I'_1\subseteq \widetilde{X}$, the following
statements hold:

(1)~ If $\Sigma_{I_0,I_1}\overset{\varphi}{\cong}\Sigma_{I'_0,I'_1}$
in {\bf Seed} and $f:=id_{I'_0,I'_1}\varphi$ and
$f^{-1}:=id_{I_0,I_1}\varphi^{-1}\in \rm{End_{par}}(\Sigma)$, then
 $f\mathcal {L} id_{I_0,I_1}, f^{-1}\mathcal {R} id_{I_0,I_1}$ and $f\mathcal {R} id_{I'_0,I'_1}, f^{-1}\mathcal {L} id_{I'_0,I'_1}$.

(2)~ $\Sigma_{I_0,I_1}\cong \Sigma_{I'_0,I'_1}$ in {\bf Seed} if and
only if $id_{I_0,I_1}\mathcal{D} id_{I'_0,I'_1}$.

(3)~ $H_{id_{I_0,I_1}}\cong Aut(\Sigma_{I_0,I_1})$.}
\end{Lem}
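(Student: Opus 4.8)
The plan is to handle the three parts in order, reducing everything to a few composite computations in ${\rm End}_{par}(\Sigma)$ and to the one genuinely categorical input, namely that a seed homomorphism admitting a two-sided inverse is a seed isomorphism (Lemma \ref{isomorphism}). For part (1) I would first record the four basic composites. Writing $f=id_{I'_0,I'_1}\varphi$ and $f^{-1}=id_{I_0,I_1}\varphi^{-1}$, the composition rule (\ref{ex-f}) together with the fact that the seed isomorphism $\varphi$ carries $X\setminus(I_0\cup I_1)$ bijectively onto $X\setminus(I'_0\cup I'_1)$ (exchangeable to exchangeable, frozen to frozen) shows that no vertices are discarded in any of the relevant compositions, so that $f^{-1}\circ f=id_{I_0,I_1}$, $f\circ f^{-1}=id_{I'_0,I'_1}$, and moreover $f\circ id_{I_0,I_1}=f=id_{I'_0,I'_1}\circ f$ and $f^{-1}\circ id_{I'_0,I'_1}=f^{-1}=id_{I_0,I_1}\circ f^{-1}$. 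Reading these against the definitions of $\mathcal{L}$ and $\mathcal{R}$, with $S^1x=Sx\cup\{x\}$ since ${\rm End}_{par}(\Sigma)$ has no identity, yields all four assertions at once: for instance $id_{I_0,I_1}=f^{-1}\circ f\in Sf$ and $f=f\circ id_{I_0,I_1}\in S\,id_{I_0,I_1}$ give $f\mathcal{L}id_{I_0,I_1}$, and symmetrically for the $\mathcal{R}$ statements, the $\mathcal{H}$ statement being their conjunction.

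For part (2), the forward implication is immediate from part (1): an isomorphism $\varphi$ produces $f$ with $id_{I_0,I_1}\mathcal{L}f\mathcal{R}id_{I'_0,I'_1}$, whence $id_{I_0,I_1}\mathcal{D}id_{I'_0,I'_1}$ because $\mathcal{D}=\mathcal{L}\circ\mathcal{R}$. For the converse I would exploit that $id_{I_0,I_1}$ and $id_{I'_0,I'_1}$ are idempotents in a common $\mathcal{D}$-class: Proposition \ref{idem2} supplies $a\in{\rm End}_{par}(\Sigma)$ with an inverse $a'$ satisfying $a\circ a'=id_{I_0,I_1}$ and $a'\circ a=id_{I'_0,I'_1}$. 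Associativity then gives $a=id_{I_0,I_1}\circ a=a\circ id_{I'_0,I'_1}$, so that $a\mathcal{R}id_{I_0,I_1}$ and $a\mathcal{L}id_{I'_0,I'_1}$; Lemma \ref{from2.22} converts these into $\Sigma_a=\Sigma_{I'_0,I'_1}$ and $a(\Sigma_a)=\Sigma_{I_0,I_1}$. Remark \ref{f1} then factors $a=id_{I_0,I_1}a_1$ and $a'=id_{I'_0,I'_1}a'_1$ with seed homomorphisms $a_1:\Sigma_{I'_0,I'_1}\to\Sigma_{I_0,I_1}$ and $a'_1:\Sigma_{I_0,I_1}\to\Sigma_{I'_0,I'_1}$, and translating $a\circ a'=id_{I_0,I_1}$, $a'\circ a=id_{I'_0,I'_1}$ back into \textbf{Seed} gives $a_1\circ a'_1=id_{\Sigma_{I_0,I_1}}$ and $a'_1\circ a_1=id_{\Sigma_{I'_0,I'_1}}$. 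By Lemma \ref{isomorphism}, $a_1$ is a seed isomorphism, whence $\Sigma_{I_0,I_1}\cong\Sigma_{I'_0,I'_1}$.

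For part (3), I would define $\Phi:Aut(\Sigma_{I_0,I_1})\to H_{id_{I_0,I_1}}$ by $\Phi(\varphi)=id_{I_0,I_1}\varphi$. Taking $I'_0=I_0$ and $I'_1=I_1$ in part (1) shows $\Phi(\varphi)\mathcal{H}id_{I_0,I_1}$, so $\Phi$ lands in the group $H_{id_{I_0,I_1}}$, which is a group by Proposition \ref{H}. The identity $\Phi(\varphi)\circ\Phi(\psi)=\Phi(\varphi\psi)$ follows once more from (\ref{ex-f}), since no vertices are lost when $\psi$ is an automorphism, and injectivity is clear because $id_{I_0,I_1}$ is the inclusion. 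For surjectivity, given $g\in H_{id_{I_0,I_1}}$, Lemma \ref{from2.22} forces $\Sigma_g=\Sigma_{I_0,I_1}$ and $g(\Sigma_g)=\Sigma_{I_0,I_1}$, so Remark \ref{f1} writes $g=id_{I_0,I_1}g_1$ with $g_1$ an endomorphism of $\Sigma_{I_0,I_1}$ in \textbf{Seed}; using the group inverse $g^{-1}\in H_{id_{I_0,I_1}}$ together with Lemma \ref{isomorphism} exactly as in part (2) shows $g_1\in Aut(\Sigma_{I_0,I_1})$ and $\Phi(g_1)=g$.

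The main obstacle, shared by parts (2) and (3), will be upgrading a bijective seed homomorphism to a seed isomorphism: a seed homomorphism that is bijective on both $\widetilde{X}$ and $X$ need not preserve the entries $|b_{xy}|$ and may even glue frozen variables (Lemma \ref{gluingfact}), so bijectivity alone is not enough. The decisive point is therefore the construction of a genuine two-sided inverse seed homomorphism, which is precisely what the regularity of the $\mathcal{D}$-class (through Proposition \ref{idem2}) provides in part (2), and what the group structure of $H_{id_{I_0,I_1}}$ provides in part (3); Lemma \ref{isomorphism} then closes the gap. A secondary point demanding care is that ${\rm End}_{par}(\Sigma)$ has no identity, so each Green-relation check must be phrased with the principal ideals $Sx\cup\{x\}$ and relies on the precise domain bookkeeping in (\ref{ex-f}) to guarantee that the one-sided identities $f\circ id_{I_0,I_1}=f$ and $id_{I'_0,I'_1}\circ f=f$ hold on the nose.
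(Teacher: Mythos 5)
Your proposal is correct and follows essentially the same route as the paper: part (1) by the direct composite identities $f\circ id_{I_0,I_1}=f$, $f^{-1}\circ f=id_{I_0,I_1}$, etc., part (2) via Proposition \ref{idem2}, Lemma \ref{from2.22}, Remark \ref{f1} and Lemma \ref{isomorphism}, and part (3) via the map $\varphi\mapsto id_{I_0,I_1}\varphi$. The only cosmetic difference is that for surjectivity in (3) you invoke the group inverse in $H_{id_{I_0,I_1}}$ directly, whereas the paper produces separate left and right inverses $g,g'$ and then checks $g=g'$; the substance is the same.
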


\begin{proof}
(1)~ As $f\circ id_{I_0,I_1}=f$ and $f^{-1}\circ f=id_{I_0,I_1}$, we have
$f\mathcal {L} id_{I_0,I_1}$. Similarly, $f^{-1}\mathcal {R}
id_{I_0,I_1}$.

Dually, we can get $f\mathcal {R} id_{I'_0,I'_1}$ and $f^{-1}\mathcal {L}
id_{I'_0,I'_1}$.

(2)~ ``Only if":\;  Assume that
$\Sigma_{I_0,I_1}\overset{\varphi}{\cong}\Sigma_{I'_0,I'_1}$, by
(1), $f\mathcal{L}id_{I_0,I_1}$ and $id_{I'_0,I'_1}\mathcal{R}f$.
Then, $id_{I_0,I_1}\mathcal{D}id_{I'_0,I'_1}$.

``If":\; If $id_{I_0,I_1}\mathcal{D}id_{I'_0,I'_1}$, by Proposition
\ref{idem2}, there exist $f, g\in\rm{End}_{par}(\Sigma)$ such that
$g\circ f=id_{I_0,I_1}$ and $f\circ g=id_{I'_0,I'_1}$. By Lemma \ref{from2.22},
$f(\Sigma_{I_0,I_1})=\Sigma_{I'_0,I'_1}$ and
$f_1:\Sigma_{I_0,I_1}\rightarrow \Sigma_{I'_0,I'_1}$ is an
isomorphism, where $f_1(x)=f(x)$ for all $x\in \widetilde{X}\setminus I_1$.

(3)~ We have $H_{id_{I_0,I_1}}$ as a group with identity
$id_{I_0,I_1}$. According to (1), for $I_0=I'_0$ and $I_1=I'_1$ and
$\varphi\in \rm Aut(\Sigma_{I_0,I_1})$, we have $id_{I_0,I_1}\varphi\mathcal H
id_{I_0,I_1}$ and then $id_{I_0,I_1}\varphi\in H_{id_{I_0,I_1}}$. So,
$\rm Aut(\Sigma_{I_0,I_1})\rightarrow H_{id_{I_0,I_1}}: \varphi\longmapsto id_{I_0,I_1}\varphi$ is an injective group homomorphism.

Conversely, assume $f\in H_{id_{I_0,I_1}}$. Then
$f\mathcal L id_{I_0,I_1}$ and $f\mathcal R id_{I_0,I_1}$. It
follows that there exists $g,g'\in H_{id_{I_0,I_1}}$ such
that $f\circ g=id_{I_0,I_1}$ and $g'\circ f=id_{I_0,I_1}$, and then, by Lemma \ref{from2.22} (b), we have
 $f(\Sigma_{I_0,I_1})=\Sigma_{I_0,I_1}$. Since $g\in H_{id_{I_0,I_1}}$, then $g\mathcal L id_{I_0,I_1}$. By Lemma \ref{from2.22} (a), $\Sigma_g=\Sigma_{I_0,I_1}$. Similarly, $\Sigma_{g'}=\Sigma_{I_0,I_1}$. Then,
we have $g(\Sigma_{I_0,I_1})=\Sigma_{I_0,I_1}$ and
$g'(\Sigma_{I_0,I_1})=\Sigma_{I_0,I_1}$.

Owing to $f\circ g=id_{I_0,I_1}$ and $g'\circ f=id_{I_0,I_1}$, we have,
respectively, the maps of sets: $$(f\circ g)|_{\widetilde{X}\setminus
I_1}=id_{\widetilde{X}\setminus I_1}\;\;\;\;
\text{and}\;\;\;\;(f\circ g)|_{X\setminus (I_0\cup I_1)}=id_{X\setminus
(I_0\cup I_1)},$$
$$(g'\circ f)|_{\widetilde{X}\setminus I_1}=id_{\widetilde{X}\setminus I_1}\;\;\;\;  \text{and}\;\;\;\;(g'\circ f)|_{X\setminus (I_0\cup I_1)}=id_{X\setminus (I_0\cup I_1)}.$$
It follows that $g|_{\widetilde{X}\setminus
I_1}=g'|_{\widetilde{X}\setminus I_1}$ and $g|_{X\setminus (I_0\cup
I_1)}=g'|_{X\setminus (I_0\cup I_1)}$.
  Hence, by the definition of seed homomorphisms, we have
$g=g':\Sigma_{I_0,I_1}\rightarrow\Sigma$, which means
$f_1:\Sigma_{I_0,I_1}\rightarrow
f(\Sigma_{I_0,I_1})=\Sigma_{I_0,I_1}\in \rm Aut(\Sigma_{I_0,I_1})$
and then,
$\rm Aut(\Sigma_{I_0,I_1})\rightarrow H_{id_{I_0,I_1}}: \varphi\longmapsto id_{I_0,I_1}\varphi$ is a surjective group homomorphism. So, (1) holds.
\end{proof}

\begin{Lem}\label{Rclasswithidentity}
\rm{ Let $R$ be an $\mathcal{R}$-class of
$\rm{End}_{par}(\Sigma)$. Then $R$ is regular if and only if there is an
$(I_0,I_1)$-type sub-seed of $\Sigma$ such that $id_{I_0,I_1}\in R$. }
\end{Lem}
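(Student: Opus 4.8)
The plan is to prove the two implications separately; the reverse implication is immediate, and the forward implication carries all the content.

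For the ``if'' direction, suppose $id_{I_0,I_1}\in R$. Since $id_{I_0,I_1}\circ id_{I_0,I_1}=id_{I_0,I_1}$, this element is idempotent, hence a regular element of $\rm{End_{par}}(\Sigma)$ (take the inverse to be itself). As $\mathcal R\subseteq\mathcal D$ we have $R\subseteq D_{id_{I_0,I_1}}$, and because $id_{I_0,I_1}$ is regular, Proposition \ref{regular} shows $D_{id_{I_0,I_1}}$ is a regular $\mathcal D$-class; thus every element of $R$ is regular, i.e. $R$ is regular.

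For the ``only if'' direction, assume $R$ is regular. By Proposition \ref{idempotent}, the $\mathcal R$-class $R$ contains an idempotent $e$, with domain sub-seed $\Sigma_e=\Sigma_{I_0,I_1}$. The first key step is to observe that every idempotent of $\rm{End_{par}}(\Sigma)$ is a \emph{retraction onto its image}. Applying the composition-domain formula (\ref{ex-f}) to $e\circ e=e$, the equalities $Dom(e\circ e)_{ex}=Dom(e)_{ex}$ and $Dom(e\circ e)_{fr}=Dom(e)_{fr}$ force $e(x)\in Dom(e)_{ex}$ for all $x\in Dom(e)_{ex}$ and $e(x)\in Dom(e)_{fr}$ for all $x\in Dom(e)_{fr}$. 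Hence the image $V:=e(Dom(e))$ satisfies $V\subseteq Dom(e)$ compatibly with the exchangeable/frozen splitting; moreover, writing any $v\in V$ as $v=e(y)$, the relation $e\circ e=e$ gives $e(v)=e(e(y))=e(y)=v$, so $e$ fixes $V$ pointwise. I then set $\Sigma_{I'_0,I'_1}:=e(\Sigma_e)$, which by Definition \ref{imageseed} is a mixing-type sub-seed of $\Sigma$ with $Dom(id_{I'_0,I'_1})=V$, so that $id_{I'_0,I'_1}\in\rm{End_{par}}(\Sigma)$ is defined, and verify the two identities
$$id_{I'_0,I'_1}\circ e=e\qquad\text{and}\qquad e\circ id_{I'_0,I'_1}=id_{I'_0,I'_1}.$$
The first holds because $e(y)\in V=Dom(id_{I'_0,I'_1})$ for every $y\in Dom(e)$, so the composite has domain $Dom(e)$ and agrees with $e$; the second holds because $V\subseteq Dom(e)$ and $e$ fixes $V$ pointwise, so the composite is the inclusion of $V$, namely $id_{I'_0,I'_1}$. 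These give $e\in id_{I'_0,I'_1}\circ\rm{End_{par}}(\Sigma)$ and $id_{I'_0,I'_1}\in e\circ\rm{End_{par}}(\Sigma)$, hence $e\,\mathcal R\,id_{I'_0,I'_1}$ and therefore $id_{I'_0,I'_1}\in R$. (As a consistency check, Lemma \ref{from2.22}(b) predicts every member of $R$ shares the image $e(\Sigma_e)=\Sigma_{I'_0,I'_1}$, which is exactly the image of $id_{I'_0,I'_1}$.)

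The main obstacle is the careful bookkeeping of the exchangeable/frozen refinement in formula (\ref{ex-f}): one must check that the retraction property of $e$ holds separately on $Dom(e)_{ex}$ and $Dom(e)_{fr}$ and that the two composites above coincide with $e$ and $id_{I'_0,I'_1}$ not merely as maps of underlying sets but as elements of $\rm{End_{par}}(\Sigma)$, i.e. with matching exchangeable and frozen domains. Once the retraction property is secured at this refined level, the two composition identities are routine; thus the heart of the argument is precisely the claim that idempotency forces the image to sit inside the domain compatibly with the exchangeable/frozen splitting.
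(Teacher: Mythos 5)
Your proof is correct and follows essentially the same route as the paper: extract an idempotent $e$ from the regular $\mathcal R$-class via Proposition \ref{idempotent}, show that $e$ restricts to the identity on its image seed $\Sigma_{I'_0,I'_1}=e(\Sigma_e)$, and verify the two composition identities $id_{I'_0,I'_1}\circ e=e$ and $e\circ id_{I'_0,I'_1}=id_{I'_0,I'_1}$ to conclude $e\,\mathcal R\,id_{I'_0,I'_1}$. The only (minor, and arguably cleaner) difference is that you read the containment $e(Dom(e)_{ex})\subseteq Dom(e)_{ex}$, $e(Dom(e)_{fr})\subseteq Dom(e)_{fr}$ directly off the domain formula (\ref{ex-f}) applied to $e\circ e=e$, whereas the paper first gets $Im(e)\subseteq Dom(e)$ by a cardinality count and then handles the exchangeable/frozen compatibility by a separate contradiction argument.
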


\begin{proof}
Assume that $id_{I_0,I_1}\in R$ for some $I_0, I_1$. Since $id_{I_0,I_1}$ is idempotent, it is regular, and also, $R$ is regular by Proposition
\ref{regular}.

Conversely, assume $R$ is regular. By Proposition \ref{idempotent},
there exists an idempotent partial seed homomorphism
$(f,\Sigma_{I'_0,I'_1}, \Sigma)$ in $R$. Denote
$f(\Sigma_{I'_0,I'_1})=\Sigma_{I_0,I_1}$. Hence, as a map of
sets, $f:\widetilde{X}\setminus I_1'\rightarrow
\widetilde{X}\setminus I_1$ is surjective and $f=f|_{(\widetilde{X}\setminus I_1')\cap
(\widetilde{X}\setminus I_1)}\circ f$. It
follows that $$\#( (\widetilde{X}\setminus I_1')\cap
(\widetilde{X}\setminus I_1))\geq
\#(Im(f^{2}))=\#(Im(f))=\#(\widetilde{X}\setminus I_1).$$ Thus,
$(\widetilde{X}\setminus I_1')\cap (\widetilde{X}\setminus
I_1)=\widetilde{X}\setminus I_1$, which means
$\widetilde{X}\setminus I_1\subseteq \widetilde{X}\setminus I_1'$.

For any $y\in \widetilde{X}\setminus I_1$, there exists $x\in
\widetilde{X}\setminus I_1'$ such that $f(x)=y$. So,
$f(y)=f(f(x))=f(x)=y$; thus, we have $f|_{\widetilde{X}\setminus
I_1}=id_{\widetilde{X}\setminus I_1}$. Therefore,
$f|_{\Sigma_{I_0,I_1}}=id_{I_0,I_1}$.  Then,
$f\circ id_{I_0,I_1}=id_{I_0,I_1}$.

Now we show $id_{I_0,I_1}\circ f=f$. Indeed, it suffices to prove that $Dom(id_{I_0,I_1}\circ f)=Dom(f)$. It is easy to see that $Dom(id_{I_0,I_1}\circ f)_{ex}=\{x\in Dom(f)_{ex}\;|\;f(x)\in Dom(id_{I_0,I_1})_{ex}\}=Dom(f)_{ex},$ If $Dom(id_{I_0,I_1}\circ f)_{fr}\subseteq Dom(f)_{fr}$, then there exists a $x\in Dom(f)_{fr}$ such that $f(x)\in Dom(id_{I_0,I_1})_{ex}$. According to the definition of image seed, it means there exists a $y\in Dom(f)_{ex}$ such that $f(y)=f(x)$. Since $f\circ f=f$, if $f(x)=f(y)$, then $x,y\in Dom(f)_{ex}$ or $x,y\in Dom(f)_{fr}$. It contradicts to $x\in Dom(f)_{fr}, y\in Dom(f)_{ex}$. Therefore, we have $Dom_(id_{I_0,I_1}\circ f)_{fr}=Dom(f)_{fr}$.

Thus, we have
$f\mathcal{R}id_{I_0,I_1}$. Hence, $id_{I_0,I_1}\in R$.
\end{proof}

\begin{Cor}\label{id}
 For a rooted cluster algebra $\mathcal A(\Sigma)$, a
$\mathcal{D}$-class $D$ in the semigroup $\rm{End_{par}}(\Sigma)$ is
regular if and only if there exists an
$(I_0,I_1)$-type sub-seed of $\Sigma$ such that
$id_{I_0,I_1}\in D$.
\end{Cor}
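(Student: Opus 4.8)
The plan is to reduce the statement to Lemma \ref{Rclasswithidentity}, exploiting two standard facts about Green's relations recalled earlier: that a $\mathcal D$-class is a disjoint union of $\mathcal R$-classes (because $\mathcal R\subseteq\mathcal D$), and that Proposition \ref{regular} transfers regularity between a single element and its entire $\mathcal D$-class. With these in hand the corollary becomes a short formal argument in both directions.

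For the ``if'' direction, suppose $id_{I_0,I_1}\in D$ for some $(I_0,I_1)$-type sub-seed of $\Sigma$. Since $id_{I_0,I_1}\circ id_{I_0,I_1}=id_{I_0,I_1}$, the element $id_{I_0,I_1}$ is idempotent, and hence regular: taking $y=id_{I_0,I_1}$ in the definition of regularity yields $x\circ y\circ x=x$ with $x=id_{I_0,I_1}$. Thus $D=D_{id_{I_0,I_1}}$ contains a regular element, and by Proposition \ref{regular} every element of $D$ is then regular, that is, $D$ is a regular $\mathcal D$-class.

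For the ``only if'' direction, suppose $D$ is regular. I would choose any $\mathcal R$-class $R$ contained in $D$, which is possible since $D$ is a union of $\mathcal R$-classes. Because $D$ is regular, Proposition \ref{regular} guarantees that every element of $D$ is regular, so in particular each element of $R$ is regular and $R$ is a regular $\mathcal R$-class. Applying Lemma \ref{Rclasswithidentity} to $R$ then produces an $(I_0,I_1)$-type sub-seed of $\Sigma$ with $id_{I_0,I_1}\in R\subseteq D$, giving the desired conclusion.

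Since this is essentially a bookkeeping reduction, I do not anticipate a genuine obstacle. The one point deserving care is that Lemma \ref{Rclasswithidentity} supplies an idempotent of the \emph{specific} form $id_{I_0,I_1}$, rather than the arbitrary idempotent furnished by the general Proposition \ref{idempotent}; it is precisely this strengthening that lets us conclude membership of some $id_{I_0,I_1}$ in $D$, which is why the corollary is routed through Lemma \ref{Rclasswithidentity} rather than through Proposition \ref{idempotent} alone.
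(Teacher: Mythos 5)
Your proposal is correct and follows exactly the route the paper intends: the corollary is stated without proof as an immediate consequence of Lemma \ref{Rclasswithidentity}, obtained by decomposing the $\mathcal D$-class into $\mathcal R$-classes and transferring regularity via Proposition \ref{regular}, which is precisely your argument. Your closing remark about why Lemma \ref{Rclasswithidentity} (rather than Proposition \ref{idempotent}) is the right tool is also the correct observation.
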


A seed homomorphism $f:\Sigma_1\rightarrow\Sigma_2$ is called
a {\bf retraction} if there exists a seed homomorphism
$g:\Sigma_2\rightarrow\Sigma_1$ such that $fg=id_{\Sigma_2}$.

\begin{Lem}\label{Rclass}
Let $f\in \rm{End}_{par}(\Sigma)$ from $\Sigma_{I'_0,I'_1}$. Then, for a mixing-type sub-seed
$\Sigma_{I_0,I_1}$ of $\Sigma$, $f\mathcal{R}id_{I_0,I_1}$ if and only if the following statements hold:

(a)~ $f(\Sigma_{I'_0,I'_1})=\Sigma_{I_0,I_1}$;

(b)~ $f_1:\Sigma_{I'_0,I'_1}\rightarrow \Sigma_{I_0,I_1}$ is a
retraction from $\Sigma_{I'_0,I'_1}$ to $\Sigma_{I_0,I_1}$, where $f_1$ is defined from $f$ by Remark \ref{f1};

(c)~ For $x,y\in Dom(f)$, if $f(x)=f(y)$, then either $x,y\in Dom(f)_{ex}$ or $x,y\in Dom(f)_{fr}$.
\end{Lem}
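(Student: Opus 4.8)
<br>

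The plan is to prove Lemma~\ref{Rclass} as an "if and only if", deriving the three structural conditions (a), (b), (c) from the $\mathcal R$-equivalence $f\mathcal R\,id_{I_0,I_1}$ and conversely reconstructing the $\mathcal R$-relation from them. For the forward direction, I would first invoke Lemma~\ref{from2.22}(b), which immediately yields condition (a), namely $f(\Sigma_{I'_0,I'_1})=id_{I_0,I_1}(\Sigma_{I_0,I_1})=\Sigma_{I_0,I_1}$. Since $f\mathcal R\,id_{I_0,I_1}$, there exist $h,h'\in\rm End_{par}(\Sigma)$ with $f\circ h=id_{I_0,I_1}$ and $id_{I_0,I_1}\circ h'=f$. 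The first of these composition identities is what I would exploit to produce the section needed for condition (b): restricting to the seed homomorphism $f_1:\Sigma_{I'_0,I'_1}\to\Sigma_{I_0,I_1}$ associated to $f$ by Remark~\ref{f1}, the equation $f\circ h=id_{I_0,I_1}$ should descend to $f_1\circ h_1=id_{\Sigma_{I_0,I_1}}$ for a suitable seed homomorphism $h_1:\Sigma_{I_0,I_1}\to\Sigma_{I'_0,I'_1}$, exhibiting $f_1$ as a retraction. The care here is to track how the domains in the composition $\circ$ of partial seed homomorphisms (equation~(\ref{ex-f})) interact with the restriction, so that $h_1$ really lands in $\Sigma_{I'_0,I'_1}$ and splits $f_1$.

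For condition (c), which forbids $f$ from gluing an exchangeable variable to a frozen one, I would argue by contradiction using the decomposition $Dom(f)=Dom(f)_{ex}\cup Dom(f)_{fr}$ together with the retraction property: if $f(x)=f(y)$ with $x\in Dom(f)_{ex}$ and $y\in Dom(f)_{fr}$, then composing with the section $h_1$ (or directly with $h$) should force a contradiction with the type-preservation built into the exchangeable/frozen splitting of the domain, much as in the last paragraph of the proof of Lemma~\ref{Rclasswithidentity}. I expect the cleanest route is to note that the section forces $f$ to be injective on the fibre structure that respects the $ex/fr$ partition, so a mixed fibre cannot occur. The conjunction of (a), (b), (c) is then established.

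For the converse, assume (a), (b), (c) hold; I must build $h\in\rm End_{par}(\Sigma)$ realizing $f\circ h=id_{I_0,I_1}$ and check $id_{I_0,I_1}\circ f=f$, which together give $f\mathcal R\,id_{I_0,I_1}$. The retraction from (b) supplies a seed homomorphism $g:\Sigma_{I_0,I_1}\to\Sigma_{I'_0,I'_1}$ with $f_1\circ g=id_{\Sigma_{I_0,I_1}}$; promoting $g$ to a partial seed homomorphism $h=id_{I'_0,I'_1}\,g\in\rm End_{par}(\Sigma)$ via Remark~\ref{f1} should give $f\circ h=id_{I_0,I_1}$ after verifying the domain bookkeeping. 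The identity $id_{I_0,I_1}\circ f=f$ is exactly the domain computation carried out at the end of Lemma~\ref{Rclasswithidentity}, and this is precisely where condition (c) is used: it is what guarantees $Dom(id_{I_0,I_1}\circ f)_{fr}=Dom(f)_{fr}$ by ruling out a frozen variable of $Dom(f)$ being sent into $Dom(id_{I_0,I_1})_{ex}$.

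The main obstacle I anticipate is not any single deep idea but the careful management of the nonstandard composition~$\circ$ and the absence of a genuine identity in $\rm End_{par}(\Sigma)$ (noted before Proposition~\ref{semigroup}): because $f\ne f\circ id_{I_0,I_1}$ in general, the domains of composites shrink according to~(\ref{ex-f}), so each claimed equality of partial seed homomorphisms must be checked separately on $Dom(\cdot)_{ex}$ and $Dom(\cdot)_{fr}$ rather than argued formally from "$f\circ h=id$". In particular, translating the abstract section guaranteed by the retraction in (b) into a bona fide partial seed homomorphism $h$ whose composite with $f$ has the correct domain $\widetilde X\setminus I_1$ — neither too large nor too small — is the delicate point, and condition (c) is the hypothesis that makes the frozen-part bookkeeping go through.
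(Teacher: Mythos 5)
Your overall strategy is the paper's: condition (a) comes from Lemma \ref{from2.22}(b); condition (b) comes from descending $f\circ g=id_{I_0,I_1}$ to $f_1g_1=id_{\Sigma_{I_0,I_1}}$ via the unique factorization of Remark \ref{f1}; and the converse direction promotes the section to $g=id_{I'_0,I'_1}g_1$, checks $f\circ g=id_{I_0,I_1}$, and verifies $id_{I_0,I_1}\circ f=f$ by computing $Dom(id_{I_0,I_1}\circ f)_{ex}$ from (a) and $Dom(id_{I_0,I_1}\circ f)_{fr}$ from (c) --- exactly as in the paper. The one step where your sketch would go wrong is the derivation of (c) in the forward direction. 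Your preferred route, that ``the section forces $f$ to be injective on the fibre structure that respects the $ex/fr$ partition,'' is false: a section can simply select the exchangeable preimage inside a mixed fibre, and by Definition \ref{imageseed} the common image is declared exchangeable in $f(\Sigma_{I'_0,I'_1})$, so (a) and (b) can hold while (c) fails; this is why (c) is an independent condition in the statement. The paper instead uses the \emph{other} half of the $\mathcal R$-relation, the factorization $f=id_{I_0,I_1}\circ h'$: by the domain rule (\ref{ex-f}), $x\in Dom(f)_{fr}$ forces $h'(x)=f(x)\in Dom(id_{I_0,I_1})_{fr}$ while $y\in Dom(f)_{ex}$ forces $h'(y)=f(y)\in Dom(id_{I_0,I_1})_{ex}$, and since these two sets are disjoint a mixed fibre $f(x)=f(y)$ is impossible. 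You do gesture at this (``or directly with $h$,'' citing the computation at the end of Lemma \ref{Rclasswithidentity}), but note that the element you need is the one satisfying $id_{I_0,I_1}\circ h'=f$, not the one satisfying $f\circ h=id_{I_0,I_1}$; with that correction the argument closes.
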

\begin{proof}

By Remark \ref{f1}, $f_1$ is defined to satisfy $f_1(x)=f(x)$ for all $x\in \widetilde{X}\setminus I'_1$.

``If'':\; Since $f_1$ is a retraction, there exists
$g_1:\Sigma_{I_0,I_1}\rightarrow\Sigma_{I'_0,I'_1}$ such that
$id_{\Sigma_{I_0,I_1}}=f_1g_1$. If $g=id_{I'_0,I'_1}g_1$, then $f\circ g=id_{I_0,I_1}f_1\circ id_{I'_0,I'_1}g_1=id_{I_0,I_1}f_1g_1=id_{I_0,I_1}$, where the second equality is due to the definition of composition of $f$ and $g$.

Since $$Dom(id_{I_0,I_1}\circ f)_{ex}=\{x\in Dom(f)_{ex}\;|\; f(x)\in Dom(id_{I_0,I_1})_{ex}\}\overset{\text{by}\;(a)}{=}Dom(f)_{ex},$$
$$Dom(id_{I_0,I_1}\circ f)_{fr}=\{x\in Dom(f)_{fr}\;|\; f(x)\in Dom(id_{I_0,I_1})_{fr}\}\overset{\text{by}\;(c)}{=}Dom(f)_{fr}$$
 and $(id_{I_0,I_1}\circ f)(x)=id_{I_0,I_1}(f_1(x))=f(x)$ for all $x\in Dom(f)$.
 Hence, we have $f=id_{I_0,I_1}\circ f$. Thus, $f\mathcal{R}id_{I_0,I_1}$.

``Only If''\;\; If $f\mathcal{R}id_{I_0,I_1}$, then $f\circ g=id_{I_0,I_1}$
and $id_{I_0,I_1}\circ h=f$ for some $g,h\in \rm{End}_{par}(\Sigma)$. By Lemma \ref{from2.22} (b), we have $f(\Sigma_{I'_0,I'_1})=(id_{I_0,I_1})(\Sigma_{I_0,I_1})=\Sigma_{I_0,I_1}$, i.e. (a) holds.
  As we have $id_{I_0,I_1}=f\circ g=id_{I_0,I_1}f_1g_1$, hence, $id_{I_0,I_1}id_{\Sigma_{I_0,I_1}}=id_{I_0,I_1}f_1g_1$. Then by Remark \ref{f1} (b), it  follows that   $f_1g_1=id_{\Sigma_{I_0,I_1}}$,  i.e. (b) holds.

   For $x,y\in \widetilde{X}\setminus I'_1=Dom(f)$, let $f(x)=f(y)$.
   Assume that $x\in Dom(f)_{fr}$ and $y\in Dom(f)_{ex}$.
   Since $id_{I_0,I_1}\circ h=f$, by Fact \ref{dom} (a), we have $x\in Dom(h)_{fr}$ and $y\in Dom(h)_{ex}$. Moreover, $h(x)=id_{I_0,I_1}(h(x))=f(x)=f(y)=id_{I_0,I_1}(h(y))=h(y)$.
  Owing to the above assumption, we have $x\in Dom(id_{I_0,I_1}\circ h)_{fr}$ and $y\in Dom(id_{I_0,I_1}\circ h)_{ex}$. Then by (\ref{ex-f}),
    $h(x)\in Dom(id_{I_0,I_1})_{fr}$ and $h(y)\in Dom(id_{I_0,I_1})_{ex}$. But, it contradicts with $h(x)=h(y)$. Therefore, the above assumption is not true.
    Similarly, $x\in Dom(f)_{ex}$ and $y\in Dom(f)_{fr}$ are also impossible. Hence, (c) follows.
\end{proof}

\begin{Lem}\label{identity}
For any mixing-type sub-seeds $\Sigma_{I_0,I_1}$ and $\Sigma_{I'_0,I'_1}$
of $\Sigma$, it holds that\\
(1)~ $id_{I_0,I_1}\mathcal{L}id_{I'_0,I'_1}$ if and only if
$\Sigma_{I_0,I_1}=\Sigma_{I'_0,I'_1}$;\\
(2)~ $id_{I_0,I_1}\mathcal{R}id_{I'_0,I'_1}$ if and only if
$\Sigma_{I_0,I_1}=\Sigma_{I'_0,I'_1}$.
\end{Lem}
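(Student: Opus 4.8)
The plan is to reduce both biconditionals to Lemma \ref{from2.22}, exploiting the special feature that for an identity partial seed homomorphism the domain seed and the image seed both coincide with $\Sigma_{I_0,I_1}$. Indeed, by Definition \ref{partial seed} one has $\Sigma_{id_{I_0,I_1}}=\Sigma_{I_0,I_1}$, and it has already been observed that $id_{I_0,I_1}(\Sigma_{I_0,I_1})=\Sigma_{I_0,I_1}$ because $id_{I_0,I_1}$ is injective. These two identifications are precisely what let Lemma \ref{from2.22}(a) drive part (1) and Lemma \ref{from2.22}(b) drive part (2).

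First I would dispatch the forward implications. For (1), assuming $id_{I_0,I_1}\mathcal{L}id_{I'_0,I'_1}$, Lemma \ref{from2.22}(a) gives $\Sigma_{id_{I_0,I_1}}=\Sigma_{id_{I'_0,I'_1}}$, i.e. $\Sigma_{I_0,I_1}=\Sigma_{I'_0,I'_1}$. Symmetrically, for (2), assuming $id_{I_0,I_1}\mathcal{R}id_{I'_0,I'_1}$, Lemma \ref{from2.22}(b) gives $id_{I_0,I_1}(\Sigma_{I_0,I_1})=id_{I'_0,I'_1}(\Sigma_{I'_0,I'_1})$, which again reads $\Sigma_{I_0,I_1}=\Sigma_{I'_0,I'_1}$. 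So both forward directions are immediate.

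Next I would handle the reverse implications of (1) and (2), which amount to the same statement. The crucial observation is that a mixing-type sub-seed determines its defining pair $(I_0,I_1)$ uniquely, i.e. the assignment $(I_0,I_1)\mapsto\Sigma_{I_0,I_1}$ is injective. To see this, I would read off from $\widetilde{B'}$ its column index set to recover the extended cluster $\widetilde{X}\setminus I_1$, whence $I_1$ is determined; then read off its row index set to recover the exchangeable cluster $X\setminus(I_0\cup I_1')$ with $I_1'=X\cap I_1$, and use the disjointness $I_0\cap I_1=\emptyset$ to solve for $I_0$. Hence $\Sigma_{I_0,I_1}=\Sigma_{I'_0,I'_1}$ forces $(I_0,I_1)=(I'_0,I'_1)$, so $id_{I_0,I_1}$ and $id_{I'_0,I'_1}$ are the very same element of $\rm{End}_{par}(\Sigma)$ and are therefore trivially both $\mathcal{L}$- and $\mathcal{R}$-related to themselves.

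I expect the only step needing genuine care to be this injectivity/bookkeeping with $I_1'=X\cap I_1$ and the disjointness $I_0\cap I_1=\emptyset$; once it is in place, everything else is a direct specialization of the already-established Lemma \ref{from2.22} to identity partial seed homomorphisms. One could alternatively bypass injectivity in the reverse direction by constructing explicit composition factors with $id_{I_0,I_1}$ itself, but exhibiting that the two identities coincide is cleaner.
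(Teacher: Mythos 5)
Your proposal is correct and follows essentially the same route as the paper, whose proof of this lemma is simply the one-line remark that it follows from Lemma \ref{from2.22}; your forward implications are exactly that specialization, using $\Sigma_{id_{I_0,I_1}}=\Sigma_{I_0,I_1}=id_{I_0,I_1}(\Sigma_{I_0,I_1})$. Your explicit verification that $(I_0,I_1)$ is recoverable from $\Sigma_{I_0,I_1}$ (so that the reverse directions reduce to reflexivity of $\mathcal L$ and $\mathcal R$) is a detail the paper leaves implicit, and it is a worthwhile addition.
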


\begin{proof}
It follows immediately by Lemma \ref{from2.22}.
\end{proof}

Let $f,f'\in
\rm{End}_{par}(\mathcal{A}(\Sigma))$ be two noncontractible partial
rooted cluster morphisms. Now assume that the restricted partial
seed homomorphisms $f^S$ and $f'^S$ in $\rm{End}_{par}(\Sigma)$ via
Proposition \ref{induceseed} are regular. Then we have the following.

\begin{Lem}\label{retraction}
Let $(f,\Sigma_{I_0,I_1},\Sigma)\in
\rm{End}_{par}(\mathcal{A}(\Sigma))$ be a noncontractible ideal
partial rooted cluster morphism with $f^S$ the restricted partial
seed homomorphism by Proposition \ref{induceseed}. Denote
$f^S(\Sigma_{I_0,I_1})=\Sigma_{J_0,J_1}$. If
$f^S\mathcal{R}id_{J_0,J_1}$ in $\rm{End}_{par}(\Sigma)$, then there
exists an ideal rooted cluster homomorphism
$g:\mathcal{A}(\Sigma_{J_0,J_1})\rightarrow
\mathcal{A}(\Sigma_{I_0,I_1})$ such that
$f\circ_{\Lambda}g=id_{\mathcal{A}(\Sigma_{J_0,J_1})}$ and thus,
$f\mathcal{R}id_{\mathcal{A}(\Sigma_{J_0,J_1})}$ in
$\rm{End}_{par}(\mathcal{A}(\Sigma))$.
\end{Lem}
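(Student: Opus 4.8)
The plan is to lift the seed-level retraction supplied by the hypothesis up to the cluster-algebra level and then read off the $\mathcal{R}$-relation from the resulting factorizations.

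First I would extract the seed data. Applying Lemma \ref{Rclass} to the regular partial seed homomorphism $f^S$ and the mixing-type sub-seed $\Sigma_{J_0,J_1}$, the relation $f^S\mathcal{R}id_{J_0,J_1}$ yields conditions (a)--(c) there; in particular condition (b) says that the seed homomorphism $f^S_1:\Sigma_{I_0,I_1}\rightarrow\Sigma_{J_0,J_1}$ attached to $f^S$ by Remark \ref{f1} is a retraction. Hence there is a seed homomorphism $g^S_1:\Sigma_{J_0,J_1}\rightarrow\Sigma_{I_0,I_1}$ with $f^S_1\circ g^S_1=id_{\Sigma_{J_0,J_1}}$. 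Being a section of a surjection, $g^S_1$ is injective, so its image is a mixing-type sub-seed of $\Sigma_{I_0,I_1}$.

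Second --- and this is the step I expect to be the main obstacle --- I would promote $g^S_1$ to an ideal rooted cluster morphism $g:\mathcal{A}(\Sigma_{J_0,J_1})\rightarrow\mathcal{A}(\Sigma_{I_0,I_1})$. The natural definition sends each initial cluster variable of $\Sigma_{J_0,J_1}$ to the variable of $\Sigma_{I_0,I_1}$ indexed by $g^S_1$ and extends multiplicatively; the real work is to verify CM1--CM3 of Definition \ref{rootmorph}, i.e. that this ring map commutes with every biadmissible mutation sequence. The delicate point is precisely here, since the inequalities recorded in the definition of a seed homomorphism must be upgraded to genuine equalities of exchange relations along mutations. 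I would use that the injectivity of $g^S_1$ together with the retraction identity $f^S_1\circ g^S_1=id$ forces its image to be an honest rooted cluster subalgebra in the sense of Theorem \ref{rooted cluster subalgebra} (the retract property excludes the obstructing entries $b_{xy}\neq 0$), so that $g$ exists as an injective, and hence ideal, rooted cluster morphism, with $g^S=g^S_1$.

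Third, I would check $f\circ_{\Lambda}g=id_{\mathcal{A}(\Sigma_{J_0,J_1})}$, where the right-hand side denotes the inclusion partial morphism, the algebra-level analogue of $id_{J_0,J_1}$. Since passing to restricted seed homomorphisms is compatible with $\circ_{\Lambda}$, one has $(f\circ_{\Lambda}g)^S=f^S\circ g^S=f^S_1\circ g^S_1=id_{\Sigma_{J_0,J_1}}$. Tracing an initial variable $x$ of $\Sigma_{J_0,J_1}$ through, $g$ sends it to the variable indexed by $g^S_1(x)$ and then $f$ sends that to $f^S_1(g^S_1(x))=x$, so $f\circ_{\Lambda}g$ and $id_{\mathcal{A}(\Sigma_{J_0,J_1})}$ agree on the whole extended cluster $\widetilde{X}_{\Sigma_{J_0,J_1}}$ and are both noncontractible there. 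By Lemma \ref{equel} a noncontractible rooted cluster morphism is determined by its values on the extended cluster, whence $f\circ_{\Lambda}g=id_{\mathcal{A}(\Sigma_{J_0,J_1})}$.

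Finally, to obtain the $\mathcal{R}$-relation in $\rm{End_{par}}(\mathcal{A}(\Sigma))$ I would invoke the factorization of Lemma \ref{idealmor.}: writing $f=\tau\circ f'$ with $\tau$ the inclusion $\mathcal{A}(\Sigma_{J_0,J_1})\hookrightarrow\mathcal{A}(\Sigma)$, one has $\tau=id_{\mathcal{A}(\Sigma_{J_0,J_1})}$ as elements of the semigroup, hence $f=id_{\mathcal{A}(\Sigma_{J_0,J_1})}\circ_{\Lambda}f'$. Together with $f\circ_{\Lambda}g=id_{\mathcal{A}(\Sigma_{J_0,J_1})}$ from the previous step, the elements $f$ and $id_{\mathcal{A}(\Sigma_{J_0,J_1})}$ generate the same principal right ideal, which is exactly $f\mathcal{R}id_{\mathcal{A}(\Sigma_{J_0,J_1})}$.
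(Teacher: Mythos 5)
Your overall route coincides with the paper's: use Lemma \ref{Rclass} to get the seed-level retraction $f^S_1\circ g^S_1=id_{\Sigma_{J_0,J_1}}$, show the image of the section is a mixing-type sub-seed whose algebra is a rooted cluster subalgebra of $\mathcal{A}(\Sigma_{I_0,I_1})$ via Theorem \ref{rooted cluster subalgebra}, lift $g^S_1$ to an injective (hence ideal) rooted cluster morphism $g$, verify $f\circ_{\Lambda}g=id_{\mathcal{A}(\Sigma_{J_0,J_1})}$ on the extended cluster using Lemma \ref{equel}, and read off $f\mathcal{R}id_{\mathcal{A}(\Sigma_{J_0,J_1})}$ from the factorization $f=\tau f_1$ of Lemma \ref{idealmor.}. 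The first, third and fourth steps are essentially the paper's argument.

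However, there is a genuine gap exactly at the point you flag as ``the main obstacle.'' To apply Theorem \ref{rooted cluster subalgebra} you must show that $b_{yx}=0$ for every exchangeable $y$ in the image sub-seed $g^S_1(\Sigma_{J_0,J_1})=(\Sigma_{I_0,I_1})_{\emptyset,I'_1}$ and every $x\in I'_1$, and you assert that ``the injectivity of $g^S_1$ together with the retraction identity $f^S_1\circ g^S_1=id$ forces'' this. That is not true at the seed level: the axioms of a seed homomorphism only give the inequalities $|b'_{f(x)f(y)}|\geq|b_{xy}|$ and sign-coherence, and a seed-theoretic retraction can perfectly well have $b_{yx}\neq 0$ for $x$ outside the image of the section. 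The paper's proof of this claim is forced to use that $f$ itself is a noncontractible rooted cluster morphism: applying CM3 to the mutation $\mu_y$ gives $f(\mu_y(y))=\mu_{f(y)}(f(y))$, and comparing the exponent of $f(x)$ on both sides --- noting that $f(x)$ has at least the two preimages $x$ and $g^{S'}f_1^S(x)$ --- yields $|b_{y(g^{S'}f_1^{S})(x)}|+|b_{yx}|\leq|b_{f^{S}(y)f^{S}(x)}|$, which combined with $|b_{y(g^{S'}f_1^{S})(x)}|=|b_{f_1^{S}(y)f_1^{S}(x)}|$ (from the sandwich of inequalities showing $g^{S'}$ is an isomorphism onto its image) forces $b_{yx}=0$. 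Without this exchange-relation exponent count, the hypothesis that $g$ exists as a rooted cluster morphism is unjustified, and the rest of your argument does not get off the ground.
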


\begin{proof}
Since $f$ is ideal, we have $\mathcal A(\Sigma_{J_0,J_1})=\mathcal
A(f^S(\Sigma_{I_0,I_1}))=f(\mathcal A(\Sigma_{I_0,I_1}))$. Owing to
$f^S\mathcal{R}id_{J_0,J_1}$, by Lemma \ref{Rclass},
$f_1^S:\Sigma_{I_0,I_1}\rightarrow\Sigma_{J_0,J_1}$ is a retraction, where $f_1(x)=f(x)$ for all $x\in \widetilde{X}\setminus I'_1$.
Hence, there exists a seed homomorphism
$g^{S'}:\Sigma_{J_0,J_1}\rightarrow\Sigma_{I_0,I_1}$ such that
$f_1^Sg^{S'}=id_{\Sigma_{J_0,J_1}}$, which means that $g^{S'}$gives a
bijection between the set of variables of $\Sigma_{J_0,J_1}$ and
that of its image seed. Owing to $g^{S'}((X_{fr}\cup
J_0)\setminus J_1)\subseteq (X_{fr}\cup I_0)\setminus I_1$,
$g^{S'}(\Sigma_{J_0,J_1})$ is a mixing-type sub-seed of
$\Sigma_{I_0,I_1}$ of $(\emptyset, I'_1)$ type for $$I'_1=\{x\in
\widetilde{X}\setminus I_1\mid x\not\in g^{S'}(\widetilde{X}\setminus
J_1)\}.$$ For any $x,y\in \widetilde{X}\setminus J_1$, we have
$|b_{xy}|\leq|b_{g^{S'}(x)g^{S'}(y)}|\leq
|b_{f_1^{S}g^{S'}(x)f_1^{S}g^{S'}(y)}|=|b_{xy}|$. Thus,
$|b_{xy}|=|b_{g^{S'}(x)g^{S'}(y)}|$. Therefore, by definition of seed
isomorphisms, we have
\begin{equation}\label{usefuliso}
\Sigma_{J_0,J_1}\cong (\Sigma_{I_0,I_1})_{\emptyset,
I'_1}=g^{S'}(\Sigma_{J_0,J_1}),
\end{equation}
 so $g^{S'}$ is injective.

 Now we will use $g^{S'}$ to induce an injective rooted cluster
morphism $g$.
\\
{\bf Claim}:\; $\mathcal{A}((\Sigma_{I_0,I_1})_{\emptyset, I'_1})$ is a rooted cluster subalgebra $\mathcal{A}(\Sigma_{I_0,I_1})$.

In fact, for any $x\in I'_1$, we have $g^{S'}f_1^{S}(x)\neq x$
since $g^{S'}f_1^{S}(x)\in g^{S'}(\widetilde{X}\setminus J_1)$, whereas
$x\not\in g^{S'}(\widetilde{X}\setminus J_1)$. For any $y\in
X\setminus (I_0\cup I_1\cup I'_1)$, by CM3, we have
$f(\mu_{y}(y))=\mu_{f(y)}(f(y))$.  Moreover, since
$f(g^{S'}f_1^{S}(x))=(f_1^{S}g^{S'})(f_1^{S}(x))=f_1^{S}(x)$,  comparing
the exponents of $f(y)$ in both sides of
$f(\mu_{y}(y))=\mu_{f(y)}(f(y))$, we get
\begin{equation}\label{great}
|b_{y(g^{S'}f_1^{S})(x)}|+|b_{yx}|\leq \sum\limits_{z\in
\widetilde{X}\setminus
I_1,f(z)=f(x)}|b_{yz}|=|b_{f^{S}(y)f^{S}(x)}|,
\end{equation}
As $f(g^{S'}f_1^{S}(y))=f(y)$, we have $g^{S'}f^{S}(y)=y$ since two
exchangeable variables can not have the same image for a rooted
cluster morphism according to CM3. Furthermore, since $f_1^{S}(y)$ and $f_1^{S}(x)
\in \widetilde{X}\setminus J_1$, we obtain
\begin{equation}\label{less}
|b_{f_1^{S}(y)f_1^{S}(x)}|=|b_{(g^{S'}f_1^{S})(y)(g^{S'}f_1^{S})(x)}|=|b_{y(g^{S'}f_1^{S})(x)}|.
\end{equation}
Combining (\ref{great}) and (\ref{less}), we obtain that $b_{yx}=0$.
By Theorem \ref{rooted cluster subalgebra},
$\mathcal{A}((\Sigma_{I_0,I_1})_{\emptyset, I'_1})$ is a rooted
cluster subalgebra of $\mathcal{A}(\Sigma_{I_0,I_1})$.

Following this claim, we have an injective rooted cluster morphism
$\iota:\mathcal{A}((\Sigma_{I_0,I_1})_{\emptyset, I'_1})\rightarrow
\mathcal{A}(\Sigma_{I_0,I_1})$.

Let $\mathcal{A}(\Sigma_{J_0,J_1})\overset{g'}{\cong}\mathcal{A}((\Sigma_{I_0,I_1})_{\emptyset,
I'_1})$ be the rooted cluster isomorphism obtained from the seed isomorphism (\ref{usefuliso}).

We then obtain an injective rooted cluster morphism $g=\iota
g':\mathcal{A}(\Sigma_{J_0,J_1})\rightarrow
\mathcal{A}(\Sigma_{I_0,I_1})$.

Since $f_1^Sg^{S'}=id_{\Sigma_{J_0,J_1}}$, we get $f_1\iota
g'|_{\widetilde{X}\setminus J_1}=id_{\widetilde{X}\setminus J_1}$;
thus, it follows $f_1\iota g'=id_{\mathcal{A}(\Sigma_{J_0,J_1})}$.

 Denote $(\Sigma_{I_0,I_1})_{\emptyset,
I'_1}=\Sigma_{K_0,K_1}$ for $K_0=I_0\setminus I'_1$ and $K_1=I_1\cup
I'_1$.

Since $\iota:
\mathcal{A}(\Sigma_{K_0,K_1})\rightarrow\mathcal{A}(\Sigma_{I_0,I_1})$
is injective,
$h^{K_0,K_1}_{I_0,I_1}=h^{I_0,I_1}_{I_0,I_1}\iota=id_{\mathcal{A}(\Sigma_{I_0,I_1})}\iota$
 by Definition \ref{partial} (ii).
Thus, we have $f\circ_{\Lambda}g=fh^{K_0,K_1}_{I_0,I_1}g'=f\iota
g'=id_{\mathcal{A}(\Sigma_{J_0,J_1})}$.

By Lemma \ref{idealmor.},
$f:\mathcal{A}(\Sigma_{I_0,I_1})\overset{f_1}{\twoheadrightarrow}\mathcal{A}(\Sigma_{J_0,J_1})\overset{\tau}{\hookrightarrow}\mathcal{A}(\Sigma)$ with $f_1$ and $\tau$, respectively, the surjective and injective
rooted cluster morphisms. Then, it holds
$id_{\mathcal{A}(\Sigma_{J_0,J_1})}\circ_{\Lambda}
f=id_{\mathcal{A}(\Sigma_{J_0,J_1})}h_{J_0,J_1}^{J_0,J_1}
f_1=id_{\mathcal{A}(\Sigma_{J_0,J_1})}id_{\mathcal{A}(\Sigma_{J_0,J_1})}g
f_1=\tau id_{\mathcal{A}(\Sigma_{J_0,J_1})}f_1=\tau f_1=f$. Hence,
$f\mathcal{R}id_{\mathcal{A}(\Sigma_{J_0,J_1})}$ in
$\rm{End}_{par}(\mathcal{A}(\Sigma))$.
\end{proof}

\section{Proofs of Theorem \ref{greenrelations}, Theorem \ref{greens} and Theorem \ref{number}}

\subsection{Green's equivalences in
$\rm{End_{par}}(\Sigma)$}

Now we can obtain the characterization of Green's equivalences in
$\rm{End_{par}}(\Sigma)$, as follows as given in Theorem \ref{greenrelations} except of the $\mathcal J$-relation.

{\bf Proof of Theorem \ref{greenrelations}.}

\begin{proof}
Denote the image seeds as
$\Sigma_{J_0,J_1}=f(\Sigma_{I_0,I_1})$ and
$\Sigma_{J'_0,J'_1}=f'(\Sigma_{I'_0,I'_1})$.

(1)~ ``If'':\; According to Lemma \ref{Rclasswithidentity} and Lemma \ref{Rclass}, since $f,f'$ are regular, we have
$f\mathcal{R}id_{J_0,J_1}$ and $f'\mathcal{R}id_{J_0,J_1}$; thus,
$f\mathcal{R}f'$.

``Only If'':\; By Lemma \ref{Rclasswithidentity}, there exists
$id_{I''_0,I''_1}$ such that $f\mathcal{R}id_{I''_0,I''_1}$ and
$f'\mathcal{R}id_{I''_0,I''_1}$ hold. By Lemma
\ref{Rclass}, (a) holds.

(2)~ ``If'':\; We have
$\Sigma_{J_0,J_1}\overset{g}{\cong}\Sigma_{J'_0,J'_1}$ and $f'=(id_{J'_0,J'_1}g)\circ f$. Thus, $(id_{J_0,J_1}g^{-1})\circ f'=(id_{J_0,J_1}g^{-1})\circ((id_{J'_0,J'_1}g)\circ f)=f$. Therefore,
$f\mathcal{L}f'$.

``Only If'':\; Owing to $f\mathcal{L} f'$, by Lemma \ref{from2.22}, (c) follows.

From $f'=g\circ f$ and $f=g'\circ f'$, we get $f=g'\circ g\circ f$ and $f'=g\circ g'\circ f'$. Denote $\Sigma_g=\Sigma_{K_0,K_1}$ and $\Sigma_{g'}=\Sigma_{K'_0,K'_1}$. Let $g=id_{K_0,K_1}g_1$ and $g'=id_{K'_0,K'_1}g'_1$. By the definition of composition of partial seed homomorphisms, for any $y\in
\widetilde{X}\setminus K_1$, there exists $x\in
\widetilde{X}\setminus I_1$ such that $y=f(x)$; thus,
$g'_1g_1(y)=g'g(y)=g'g(f(x))=f(x)=y$. Hence, we have
$g'_1g_1=id_{\widetilde{X}\setminus K_1}$. Similarly,
$g_1g'_1=id_{\widetilde{X}\setminus K'_1}$. It means $g'_1g_1=id_{\Sigma_{K_0,K_1}}$ and $g_1g'_1=id_{\Sigma_{K'_0,K'_1}}$. By
Lemma \ref{isomorphism},
$g_1:\Sigma_{J_0,J_1}\rightarrow\Sigma_{J'_0,J'_1}$ is a seed
isomorphism, that is, (e) follows.

(3)~ It follows directly from the definition of $\mathcal H$-relation and by (1) and (2).

(4)~ According to Lemma \ref{Rclasswithidentity} and Lemma \ref{Rclass}, since $f,f'$ are regular, we have
$f\mathcal{R}id_{J_0,J_1}$ and $f'\mathcal{R}id_{J_0,J_1}$.

``Only If'':\; Since $f\mathcal{D}f'$,
 we get $id_{J_0,J_1}\mathcal{D}id_{J'_0,J'_1}$.
Therefore, by Lemma \ref{idem} (2), $\Sigma_{J_0,J_1}\cong \Sigma_{J'_0,J'_1}$.

``If'':\; Since $\Sigma_{J_0,J_1}\cong \Sigma_{J'_0,J'_1}$, we have $id_{J_0,J_1}\mathcal{D}id_{J'_0,J'_1}$
 by Lemma \ref{idem} (2). Thus, we have $f\mathcal{D}f'$.
\end{proof}

 As a remaining question, it needs to be considered in further work {\em how to characterize the $\mathcal J$-relation in} $\rm End_{par}(\Sigma)$.

As a simple application of  Green's equivalences, we now give an fact of regular elements in $\rm End_{par}(\Sigma(Q))$ for a linear quiver $Q$ of type $A_n$, where $\Sigma=\Sigma(Q)$ means the seed determined in by $Q$, whose cluster variables are correspondent to the vertices of $Q$ including frozen variables corresponding to the frozen vertices of $Q$ and whose exchange matrix is the skew-symmetric matrix determined by the number of arrows between vertices.

Two subsets $S_1, S_2\subseteq$  of $\widetilde X$ are said to {\bf be linked}  if there are $s_1\in S_1, s_2\in S_2$ such that $b_{s_1s_2}\neq 0$.

Each $(I_0,I_1)$-type seed $\Sigma_{I_0,I_1}$ of $\Sigma$ can be decomposed as $\bigsqcup \Sigma_i$, where the mixing-type sub-seed $\Sigma_i$ is determined by a connected component of the corresponding quiver of $\Sigma_{I_0,I_1}$. All such $\Sigma_i$ are called the {\bf connected components} of $\Sigma_{I_0,I_1}$. Write  $\Sigma_i=(X_i,\widetilde {B_i})$. Let $f:\Sigma_{I_0,I_1}\rightarrow\Sigma$ be a partial seed homomorphism, denote $f_i=f\mid_{\Sigma_i}$.

\begin{Lem}\label{reg}
Let $\Sigma=\Sigma(Q)$, with a partial seed homomorphism $f:\Sigma_{I_0,I_1}\rightarrow\Sigma$ in $\rm End_{par}(\Sigma)$. Give
two statements:

(a)\; For $i,j$,  if $f(\widetilde{X_i})$ and $f(\widetilde{X_j})$ are linked, then there is $k\in Q_0$ such that  $f(\widetilde{X_i}\cup\widetilde{X_j})\subseteq f(\widetilde{X_k})$.

(b)\; For $x,y\in \widetilde{X}\setminus I_1$, if $f(x)=f(y)$, then $x,y\in X\setminus (I_0\cup I_1)$ or $x,y\in (X_{fr}\setminus I_1)\cup I_0$.
\\
Then,

(1)\; If $f$ is regular, then (a) and (b) hold.

(2)\; Conversely, assume $Q$ is a linear quiver of type $A_n$, then if (a) and (b) hold, we have  $f:\Sigma_{I_0,I_1}\rightarrow\Sigma$ is regular in $\rm End_{par}(\Sigma)$.
\end{Lem}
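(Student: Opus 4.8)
The plan is to route everything through the $\mathcal R$-relation with an identity-type element, so that Lemma \ref{Rclass} becomes the main engine in both directions. Throughout I write $\Sigma_{J_0,J_1}=f(\Sigma_{I_0,I_1})$ for the image seed, and recall that $Dom(f)_{ex}=X\setminus(I_0\cup I_1)$ and $Dom(f)_{fr}=(X_{fr}\cup I_0)\setminus I_1$, so that statement (b) of the lemma is literally condition (c) of Lemma \ref{Rclass}.

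For part (1), I would first observe that if $f$ is regular then its whole $\mathcal D$-class is regular by Proposition \ref{regular}, so in particular the $\mathcal R$-class $R_f$ is regular; by Lemma \ref{Rclasswithidentity} there is an $(J_0,J_1)$-type sub-seed with $id_{J_0,J_1}\in R_f$, and by Lemma \ref{from2.22}(b) the image seed forces $\Sigma_{J_0,J_1}=f(\Sigma_{I_0,I_1})$. Thus $f\mathcal R id_{J_0,J_1}$, and condition (c) of Lemma \ref{Rclass} gives exactly statement (b). For statement (a) I would exploit the retraction datum of Lemma \ref{Rclass}(b): there is $g_1:\Sigma_{J_0,J_1}\rightarrow\Sigma_{I_0,I_1}$ with $f_1 g_1=id_{\Sigma_{J_0,J_1}}$, where $f_1$ is associated to $f$ via Remark \ref{f1}. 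As in the proof of Lemma \ref{retraction}, the identity $f_1 g_1=id$ forces $|b_{g_1(t)g_1(t')}|=|b_{tt'}|$ for all $t,t'$, so $g_1$ is a seed isomorphism onto its image and sends connected seeds to connected seeds; the same holds for $f_1$ because a seed homomorphism preserves nonvanishing of $b_{xy}$ on exchangeable pairs by Definition \ref{seedhom}. Hence each $f(\widetilde{X_i})$ is connected, and if $f(\widetilde{X_i})$ and $f(\widetilde{X_j})$ are linked they lie in one connected component $C$ of $\Sigma_{J_0,J_1}$. Setting $e_1=g_1 f_1$, an idempotent seed endomorphism of $\Sigma_{I_0,I_1}$ with $f(e_1(s))=f_1 g_1 f_1(s)=f_1(s)=f(s)$, the set $g_1(C)$ is connected and so lies in a single connected component $\Sigma_k$ of $\Sigma_{I_0,I_1}$; therefore $e_1(\widetilde{X_i}\cup\widetilde{X_j})\subseteq g_1(C)\subseteq\widetilde{X_k}$, and applying $f$ gives $f(\widetilde{X_i}\cup\widetilde{X_j})=f(e_1(\widetilde{X_i}\cup\widetilde{X_j}))\subseteq f(\widetilde{X_k})$, which is statement (a).

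For part (2) the goal is to manufacture a section and invoke Lemma \ref{Rclass} in the reverse direction: once $f_1$ is shown to be a retraction onto $\Sigma_{J_0,J_1}$, conditions (a), (b), (c) of Lemma \ref{Rclass} all hold (condition (a) there is the definition of $\Sigma_{J_0,J_1}$, and condition (c) there is our (b)), so $f\mathcal R id_{J_0,J_1}$; then the idempotent $id_{J_0,J_1}$ lies in $R_f$, whence $R_f$ is regular by Lemma \ref{Rclasswithidentity} and $f$ is regular by Proposition \ref{regular}. To build the section $g_1$ I would use that for a linear type $A_n$ quiver every nonzero $|b_{xy}|$ equals $1$ and every connected sub-seed is an interval, so the images $f(\widetilde{X_i})$ are intervals and \emph{linked} means overlapping or endpoint-adjacent intervals. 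For a fixed connected component $C$ of $\Sigma_{J_0,J_1}$, iterating statement (a) over the intervals $f(\widetilde{X_i})\subseteq C$ produces a distinguished component $\Sigma_k$ with $f(\widetilde{X_k})=C$; here the sign condition of Definition \ref{seedhom} rules out orientation-reversing folding, while statement (b) together with Lemma \ref{gluingfact} rules out identifying two distinct variables of $\widetilde{X_k}$ (such an identification would force some $|b|\geq 2$), so that $f_1|_{\widetilde{X_k}}:\widetilde{X_k}\rightarrow C$ is a bijection. I would then set $g_1|_C=(f_1|_{\widetilde{X_k}})^{-1}$ componentwise, so that $f_1 g_1=id_{\Sigma_{J_0,J_1}}$ by construction, and verify that $g_1$ is a seed homomorphism: since all $|b|\in\{0,1\}$, the inequality $|b_{g_1(t)g_1(t')}|\geq|b_{tt'}|$ reduces to preservation of adjacency, which $g_1$ inherits by inverting the adjacency-preserving bijection $f_1|_{\widetilde{X_k}}$, and the sign condition follows by transporting orientations back along $f_1$.

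The main obstacle will be the construction and verification of the global section $g_1$ in part (2): one must show that the per-component choice of the distinguished $\Sigma_k$ is unambiguous and that the resulting $g_1$ simultaneously respects adjacency and sign across all components. This is precisely where the linear type $A_n$ hypothesis is essential, since the one-dimensional interval combinatorics makes statement (a) iterable into a single dominating component, and the single-arrow property collapses the seed-homomorphism inequalities to the purely combinatorial requirement that adjacency be preserved, which the section inherits by inversion.
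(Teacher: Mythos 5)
Your overall architecture coincides with the paper's: part (1) is the direct argument from a $g$ witnessing regularity (you repackage it through Lemma \ref{Rclasswithidentity} and Lemma \ref{Rclass}, obtaining statement (b) as condition (c) of Lemma \ref{Rclass} and statement (a) from the retraction datum via the idempotent $e_1=g_1f_1$; the paper instead works directly with $f\circ g\circ f=f$, but the content is the same and your version of (1) is correct). Part (2) also follows the paper's route: use (a) to produce, for each connected component $C$ of the image seed, a dominating component $\Sigma_k$ with $f(\widetilde{X_k})=C$, use (b) to keep frozen and exchangeable variables separated, build a section of $f_1$, and conclude $f\mathcal{R}\,id_{J_0,J_1}$ by Lemma \ref{Rclass} and regularity by Proposition \ref{regular}.

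The genuine gap is in your justification that $f_1|_{\widetilde{X_k}}:\widetilde{X_k}\rightarrow C$ is a bijection. First, Lemma \ref{gluingfact} is a statement about \emph{rooted cluster morphisms}, where the exchange relations constrain identifications; it says nothing about partial seed homomorphisms. Second, identifying two variables of $\widetilde{X_k}$ does not force any $|b|\geq 2$: by Definition \ref{imageseed} the matrix of the image seed is a \emph{restriction} of $\widetilde B$ to the image labels, not a sum over fibres, so no multiplicities accumulate. In fact the bijectivity claim itself is false: take $Q:\ x_1\rightarrow x_2\rightarrow x_3$, $I_0=\{x_2\}$, $I_1=\emptyset$, and $f(x_1)=f(x_3)=x_1$, $f(x_2)=x_2$. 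This is a valid partial seed homomorphism (the sign condition of Definition \ref{seedhom} is vacuous here because $(x_1,\cdot)$ and $(x_3,\cdot)$ are not adjacent pairs in $\Sigma_{I_0,I_1}$, as $b_{x_1x_3}=0$), it satisfies (a) and (b), it is regular, yet it folds the unique connected component at the frozen vertex $x_2$. Your sign-condition argument only excludes folds at \emph{exchangeable} vertices. The conclusion of part (2) survives because Lemma \ref{Rclass} only needs \emph{some} section $g_1$ with $f_1g_1=id$, and a set-theoretic section of the surjection $f_1|_{\widetilde{X_k}}$ can always be chosen and checked to be a seed homomorphism using the type $A_n$ interval combinatorics (in the example, $g_1(x_1)=x_1$ or $x_3$ both work); but your construction $g_1|_C=(f_1|_{\widetilde{X_k}})^{-1}$ as stated does not exist in general. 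To be fair, the paper's own proof asserts at this point that $f|_{\Sigma_i}$ is an isomorphism with no justification, so you have reproduced a soft spot of the original argument --- but with an explicitly incorrect supporting claim that should be replaced by the choice-of-section argument.
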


\begin{proof}
 Assume that $f(\Sigma_{I_0,I_1})$ is decomposed as $\bigsqcup \Sigma'_{i'}$ (see \cite{HLY}) for a finite index $I'$. Denote $\Sigma'_{i'}=(X'_{i'},\widetilde{B'_{i'}})$, where the mixing-type sub-seed $\Sigma'_{i'}$ is determined by a connected component of the corresponding quiver of $f(\Sigma_{I_0,I_1})$.

(1) We have $g\in \rm End_{par}(\Sigma)$ such that $f\circ g\circ f=f$. Without loss of generality, assume $Dom(g)=f(\Sigma_{I_0,I_1})$. The condition (b) holds. Otherwise, there exist $x\in X\setminus(I_0\cup I_1)$ and $y\in (X_{fr}\setminus I_1)\cup I_0$ with $f(x)=f(y)$, then $x$ and $y$ can not both in the domain of $f\circ g\circ f$ according to the definition. It is a contradiction.

For (a), if $f(\widetilde X_i)$ and $f(\widetilde X_j)$ are linked, then there exists $x\in f(\widetilde X_i), y\in f(\widetilde X_j)$ such that $b_{xy}\neq 0$, that is, $f(\widetilde X_i)$ and $f(\widetilde X_j)$ are in the same connected component $\Sigma'_{k'}$ of $f(\Sigma_{I_0,I_1})$. Thus, $f(\widetilde{X_i}\cup\widetilde{X_j})=f(\widetilde X_i)\cup f(\widetilde X_j)\subseteq \widetilde{X'_{k'}}$.

As $f\circ g\circ f=f$, so $g(\widetilde{X'_{k'}})\subseteq \widetilde{X_k}$ for some $k$. Since $f(x')=x$ and $f\circ g\circ f=f$, we have $x=fgf(x')\in f(g(\widetilde{X'_{k'}}))\subseteq f(\widetilde X_k)$. Therefore, $x\in f(\widetilde X_k)\cap \widetilde{X'_{k'}}$. As $\Sigma'_{k'}$ is a connect component and $f(\Sigma_{k})$ is connected, we have $f(\widetilde X_k)\subseteq \widetilde{X'_{k'}}$. For any $z\in \widetilde{X'_{k'}}$, there exists $w\in \widetilde X\setminus I_1$ such that $f(w)=z$. By $f\circ g\circ f=f$, we have $fgf(w)=fg(z)=f(w)=z$. Moreover, since $z\in \widetilde{X'_{k'}}$ and $g(\widetilde{X'_{k'}})\subseteq \widetilde{X_k}$, we have $z=fg(z)\in f(\widetilde{X_k})$, it means that $\widetilde{X'_{k'}}\subseteq f(\widetilde{X_k})$. Thus, we obtain that $\widetilde{X'_{k'}}=f(\widetilde{X_k})$. Therefore,  $f(\widetilde{X_i}\cup\widetilde{X_j})=f(\widetilde{X_i})\cup f(\widetilde{X_j})\subseteq \widetilde{X'_{k'}}=f(\widetilde{X_k})$ which means the result.

(2) The condition (a) means that there exists an $i\in I$ such that $f(\widetilde{X_i})=\widetilde{X'_{i'}}$ for each $i'\in I'$. Otherwise, we may assume that $\widetilde{X'_{i'}}=f(\widetilde{X_1})\cup f(\widetilde{X_2})$. As $\Sigma_{i'}$ is connected, so $f(\widetilde{X_1})$ and $f(\widetilde{X_2})$ are linked. Hence by (a) there exists $i$ such that $\widetilde{X'_{i'}}=f(\widetilde{X_1}\cup\widetilde{X_2})\subseteq f(\widetilde{X_i})$. As $\Sigma_{i'}$ is a connected component of $f(\Sigma_{I_0,I_1})$, so we have $\widetilde{X'_{i'}}=f(\widetilde{X_i})$.

The condition (b) ensures that $f(x)\in \Sigma'_{i'}$ is frozen if $x\in \Sigma_i$ is frozen. In fact, note that $x$ is in the image seed $f(\Sigma_{I_0,I_1})$, if $f(x)$ is exchangeable in $\Sigma'_{i'}$, then there is an exchangeable variable $y$  such that $f(y)=f(x)$ by the definition of image seed. It contradicts to (b).

Since $Q$ is a linear quiver of type $A_n$, it is easy to see that $f|_{\Sigma_i}: \Sigma_i\rightarrow \Sigma'_{i'}$ is an isomorphism for each $i$. Thus, for the seed homomorphism $f_1:\Sigma_{I_0,I_1}\rightarrow f(\Sigma_{I_0,I_1})$ satisfying $f_1(x)=f(x)$ for all $x\in \widetilde X\setminus I_1$, we have a seed homomorphism $g:f(\Sigma_{I_0,I_1})\rightarrow \Sigma_{I_0,I_1}$ such that $f_1g=id_{f(\Sigma_{I_0,I_1})}$. Therefore,  $f\mathcal R id_{f(\Sigma_{I_0,I_1})}$ by Lemma \ref{Rclass}, and then $f$ is regular by Proposition \ref{regular}.
\end{proof}

By Lemma \ref{reg}, we have the following byproduct.

\begin{Cor}\label{ret}
Let $Q$ be a linear quiver of type $A_n$ and $\Sigma=\Sigma(Q)$. If a partial seed homomorphism $f:\Sigma_{I_0,I_1}\rightarrow\Sigma$ in $\rm End_{par}(\Sigma)$ is regular, then its $f_1:\Sigma_{I_0,I_1}\rightarrow f(\Sigma_{I_0,I_1})$ is a retraction.
\end{Cor}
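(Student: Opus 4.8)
The plan is to read the retraction off directly from the machinery already assembled for Lemma \ref{reg}, specialized to the regular case. First I would invoke Lemma \ref{reg}(1): since $f$ is regular, the two combinatorial conditions (a) and (b) of that lemma hold automatically. Condition (a) says that $f$ respects the connected-component decomposition of the image seed, in the sense that each image component arises from a single source component, and condition (b) says that $f$ never identifies a frozen variable with an exchangeable one. These are exactly the hypotheses needed to run the second half of the proof of Lemma \ref{reg}.

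Next I would carry out the component-wise argument from the proof of Lemma \ref{reg}(2). Writing $\Sigma_{I_0,I_1}=\bigsqcup_i\Sigma_i$ and $f(\Sigma_{I_0,I_1})=\bigsqcup_{i'}\Sigma'_{i'}$, condition (a) produces for each image component $\Sigma'_{i'}$ a source component $\Sigma_i$ with $f(\widetilde{X_i})=\widetilde{X'_{i'}}$, while condition (b) guarantees that $f|_{\Sigma_i}$ carries frozen variables to frozen variables. Because $Q$ is a linear quiver of type $A_n$, every exchange entry is $0$ or $\pm 1$ and each component is a path, so the length-preserving surjection $f|_{\Sigma_i}\colon\Sigma_i\to\Sigma'_{i'}$ forced by the seed-homomorphism inequalities $|b_{xy}|\le|b_{f(x)f(y)}|$ is in fact a seed isomorphism. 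Taking the inverse isomorphisms on each component and assembling them — legitimate since the components have pairwise disjoint domains and ranges — yields a seed homomorphism $g\colon f(\Sigma_{I_0,I_1})\to\Sigma_{I_0,I_1}$.

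Finally I would verify $f_1 g=id_{f(\Sigma_{I_0,I_1})}$, where $f_1$ is the seed homomorphism associated to $f$ by Remark \ref{f1}, namely $f_1(x)=f(x)$ on $\widetilde{X}\setminus I_1$. On each component $f_1|_{\Sigma_i}$ and $g|_{\Sigma'_{i'}}$ are mutually inverse isomorphisms, so the composite is the identity component by component, hence globally. By the definition of retraction this exhibits $f_1\colon\Sigma_{I_0,I_1}\to f(\Sigma_{I_0,I_1})$ as a retraction, which is the claim.

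The step I expect to be the main obstacle is the single place where one upgrades the surjection $f|_{\Sigma_i}$ to a genuine seed isomorphism: this is precisely where the linearity of type $A_n$ is indispensable, since for a general quiver a length-preserving component map need not be injective, nor an isometry on the exchange matrices, and the inverse collection would then fail to define a seed homomorphism. Once this upgrade is secured, the remainder is bookkeeping about disjoint components and the compatibility of $f_1$ with $f$ recorded in Remark \ref{f1}.
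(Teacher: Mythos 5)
Your proposal is correct and follows essentially the same route as the paper: the paper's proof likewise applies Lemma \ref{reg}(1) to obtain conditions (a) and (b) from regularity, and then extracts from the proof of Lemma \ref{reg}(2) the seed homomorphism $g\colon f(\Sigma_{I_0,I_1})\to\Sigma_{I_0,I_1}$ with $f_1g=id_{f(\Sigma_{I_0,I_1})}$. Your component-wise elaboration simply spells out what the paper cites implicitly.
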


\begin{proof} By Lemma \ref{reg} (1), the conditions (a) and (b) hold. Then from the proof of Lemma \ref{reg} (2), we have a seed homomorphism $g:f(\Sigma_{I_0,I_1})\rightarrow \Sigma_{I_0,I_1}$ such that $f_1g=id_{f(\Sigma_{I_0,I_1})}$.
\end{proof}

\subsection{ Continuation of Green's equivalences}

 Here, we characterize in Theorem \ref{greens} the Green's equivalences in $\rm{End}_{par}(\mathcal{A}(\Sigma))$ via the corresponding
relations in $\rm{End}_{par}(\Sigma)$.

{\bf Proof of Theorem \ref{greens}.}

\begin{proof} Denote
$f^S(\Sigma_{I_0,I_1})=\Sigma_{J_0,J_1}$ and
$f'^S(\Sigma_{I'_0,I'_1})=\Sigma_{J'_0,J'_1}$.

First, we prove in the case for $\mathcal F=\mathcal R$. ``If'': \;By Lemma \ref{Rclasswithidentity} and Theorem
\ref{greenrelations} (1), we have $f^S\mathcal{R}id_{J_0,J_1}$ and
$f'^S\mathcal{R}id_{J'_0,J'_1}$. According to Lemma
\ref{retraction}, we have
$f\mathcal{R}id_{\mathcal{A}(\Sigma_{J_0,J_1})}$ and
$f'\mathcal{R}id_{\mathcal{A}(\Sigma_{J_0,J_1})}$. Therefore, we
have $f\mathcal{R}f'$.

``Only If'':\; Since $f\mathcal{R}f'$,  there exist $g,g'\in
\rm End_{par}(\mathcal{A}(\Sigma))$ such that $f=f'\circ_{\Lambda}g'$
and $f'=f\circ_{\Lambda}g$. By Lemma \ref{idealmor.}, we have $g=\tau
g_1$ and $g'=\tau'g'_1$ for some surjective rooted cluster morphisms
$g_1, g'_1$ and injective rooted cluster morphisms $\tau, \tau'$.
Thus, $f\circ_{\Lambda}g=fhg_1=f'$ and
$f'\circ_{\Lambda}g'=f'h'g'_1=f$ for some ideal rooted cluster
morphisms $h$ and $h'$. It follows that
\begin{equation}\label{twoused}
fhg_1=f' \;\;\;\;\; \text{ and}\;\;\;\;\;
f'h'g'_1=f.
\end{equation}
 Since $f$ is noncontractible, we have
$f(x)\not\in \mathbb{Z}$ for all $x\in \widetilde{X}\setminus I_1$.
Then, using (\ref{twoused}), $h'g'_1$ is
noncontractible. Similarly, $hg_1$ is noncontractible. Hence, the partial seed homomorphisms
$(hg_1)^S$ and $(h'g'_1)^S$ determined by $hg_1$ and $h'g'_1$, respectively, due to Proposition \ref{induceseed} are in $\rm End_{par}(\Sigma)$. From (\ref{twoused}) , Lemma \ref{gluingfact} and by definition, we have $f^S\circ(h g_1)^S=f'^S$ and
$f'^S\circ(h' g'_1)^S=f^S$. Therefore, $f^S\mathcal{R}f'^S$.

Second, we prove in the case $\mathcal F=\mathcal L$.
 By Lemma \ref{idealmor.},
we have
$$f:\mathcal{A}(\Sigma_{I_0,I_1})\overset{f_1}{\twoheadrightarrow}\mathcal{A}(\Sigma_{J_0,J_1})\overset{\tau}{\hookrightarrow}\mathcal{A}(\Sigma)\;\;\text{and}\;\;f':\mathcal{A}(\Sigma_{I'_0,I'_1})\overset{f'_1}{\twoheadrightarrow}\mathcal{A}(\Sigma_{J'_0,J'_1})\overset{\tau'}{\hookrightarrow}\mathcal{A}(\Sigma),$$

``Only if'': \;By Theorem \ref{greenrelations} (2), we have
$\Sigma_{I_0,I_1}=\Sigma_{I'_0,I'_1}$ and there exists a seed
isomorphism $\phi:\Sigma_{J_0,J_1}\rightarrow \Sigma_{J'_0,J'_1}$
such that $f'^S=\phi f^S$. By Proposition \ref{basiclem}, there is a
rooted cluster isomorphism
$\widetilde\phi:\mathcal{A}(\Sigma_{J_0,J_1})\rightarrow\mathcal{A}(\Sigma_{J'_0,J'_1})$.
Since $f'^S=\phi f^S$,  we have $\widetilde{\phi} f_1(x)=\phi f^S(x)=f'^S(x)=f'_1(x)$ for any $x\in \widetilde{X}\setminus
I_1$, that is, $f'_1|_{\widetilde{X}\setminus I'_1}=\widetilde{\phi}
f_1|_{\widetilde{X}\setminus I'_1}$. Moreover, since $f'_1$ and
$\widetilde{\phi} f_1$ are noncontractible, by Lemma
\ref{equel}, it follows that $f'_1=\phi f_1$.

Let $g=\tau' \phi$ and $g'=\tau\phi^{-1}$. Therefore,
$g\circ_{\Lambda} f=\tau'\phi h^{J_0,J_1}_{J_0,J_1} f_1=\tau'\phi
id_{\mathcal{A}(\Sigma_{J_0,J_1})} f_1=\tau'f'_1=f'.$ Similarly,  $f=g'\circ_{\Lambda} f'$. Thus, we have $f\mathcal{L}f'$.

``If'': \;Since $f\mathcal{L}f'$, there exist $g,g'\in
\rm End_{par}(\mathcal{A}(\Sigma))$ such that $g\circ_{\Lambda}
f=ghf_1=f'$ and $g'\circ_{\Lambda} f'=g'h'f'_1=f$ for  some $h, h'\in \Lambda$. It is clear that
$\Sigma_{I_0,I_1}=\Sigma_{I'_0,I'_1}$.
 Now we show $f'^{S}=(id_{J'_0,J'_1}\varphi'^S) \circ f^{S}$ with a seed isomorphism $\varphi'^S: \Sigma_{J_0,J_1}\cong \Sigma_{J'_0,J'_1}$.

From
$f(\mathcal{A}(\Sigma_{I_0,I_1}))=\mathcal{A}(\Sigma_{J_0,J_1})$ and
$f'_1(\mathcal{A}(\Sigma_{I_0,I_1}))=\mathcal{A}(\Sigma_{J'_0,J'_1})$, we have
$g'h'(\mathcal{A}(\Sigma_{J'_0,J'_1}))=\mathcal{A}(\Sigma_{J_0,J_1})$.
By Lemma \ref{idealmor.}, there exists a surjective
rooted cluster morphism
$\varphi:\mathcal{A}(\Sigma_{J'_0,J'_1})\rightarrow
\mathcal{A}(\Sigma_{J_0,J_1})$ such that $g'h'=\tau \varphi$. Thus,
$\tau\varphi f'_1=g'h'f'_1=f=\tau f_1$.

But, since $\tau$ is injective, we get $\varphi
f'_1=f_1$. Similarly, there exists a
surjective rooted cluster morphism
$\varphi':\mathcal{A}(\Sigma_{J_0,J_1})\rightarrow
\mathcal{A}(\Sigma_{J'_0,J'_1})$ such that $\varphi' f_1=f'_1$.
Hence, $\varphi \varphi' f_1=f_1$ and $\varphi' \varphi f'_1=f'_1$.

Moreover, since $Imf_1=\mathcal{A}(\Sigma_{J_0,J_1})$ and
$Imf'_1=\mathcal{A}(\Sigma_{J'_0,J'_1})$, we obtain $\varphi
\varphi'=id_{\mathcal{A}(\Sigma_{J_0,J_1})}$ and $\varphi'
\varphi=id_{\mathcal{A}(\Sigma_{J'_0,J'_1})}$. Thus, the restricted
seed homomorphisms $\varphi^S$ and $\varphi'^S$ of $\varphi$ and $\varphi'$, respectively, are isomorphisms:
$$\Sigma_{J'_0,J'_1}\overset{\varphi^S}{\cong}\Sigma_{J_0,J_1}\;\;\text{and}\;\;\Sigma_{J_0,J_1}\overset{\varphi'^S}{\cong}\Sigma_{J'_0,J'_1}.$$

Owing to $f'_1=\varphi' f_1$, analyzing on each initial cluster
variables, we have $f'^{S}=(id_{J'_0,J'_1}\varphi'^S)\circ f^{S}$.

In summary, by Theorem \ref{greenrelations} (2), the result holds for the case $\mathcal F=\mathcal L$.

Since $\mathcal{H}=\mathcal{L}\wedge \mathcal{R}$ and
$\mathcal{H}=\mathcal{L}\vee \mathcal{R}$, the cases
$\mathcal{F}=\mathcal{H}$ and $\mathcal{F}=\mathcal{D}$ follow immediately.
\end{proof}

We call the connection given in this theorem and its proof the {\bf continuation of Green's equivalences} from $\rm{End}_{par}(\Sigma)$ to
$\rm{End}_{par}(\mathcal{A}(\Sigma))$

\subsection{  Iso-classes of sub-rooted cluster algebras via regular $\mathcal D$-classes  }

As the embody of the meaning of Green's equivalences in cluster algebras, we now give in Theorem \ref{number} a one-to-one correspondence between the isomorphic
classes of sub-rooted cluster algebras of $\mathcal{A}(\Sigma)$ and the regular $\mathcal{D}$-classes in $\rm{End_{par}}(\Sigma)$.

An isomorphism of two sub-rooted cluster algebras of a
rooted cluster algebra is under the meaning of rooted cluster
isomorphism, replying on the fact from Proposition \ref{basiclem}.

{\bf Proof of Theorem \ref{number}.}

\begin{proof} By the definition of sub-rooted cluster algebra, $\mathcal A(\Sigma')\cong \mathcal A(\Sigma_{I_0,I_1})$ for some $I_0\subseteq X$ and $I_1\subseteq \widetilde{X}$, then $[\mathcal{A}(\Sigma')]=[\mathcal{A}(\Sigma_{I_0,I_1})]$.
By Proposition \ref{basiclem},   $\Sigma'\cong\Sigma_{I_0,I_1}$ in {\bf
Seed}.

 According to Lemma \ref{idem}, $\varphi$ is well-defined. By
 Corollary
\ref{id}, for any regular $\mathcal{D}$-class $D$, there exist
$I_0$ and $I_1$ such that $id_{I_0,I_1}\in D$, which is
equivalent to $D_{id_{I_0,I_1}}=D$. Therefore, $\varphi([\mathcal A(\Sigma_{I_0,I_1})])=D$, that is, $\varphi$ is
surjective.

If $D=D_{id_{I_0,I_1}}=D_{id_{I'_0,I'_1}}$, then
$id_{I_0,I_1}\mathcal{D}id_{I'_0,I'_1}$. By Lemma \ref{idem}, if we have
$\Sigma_{I_0,I_1}\cong\Sigma_{I'_0,I'_1}$, then $[\mathcal A(\Sigma_{I_0,I_1})]=[\mathcal A(\Sigma_{I'_0,I'_1})]$. It means that  $\varphi$ is
 injective.
\end{proof}

By this theorem and Theorem \ref{rooted cluster subalgebra}, as an application, we obtain the following.
\begin{Cor}
For a rooted cluster algebra $\mathcal{A}(\Sigma)$ and its mixing-type sub-rooted cluster algebra $\mathcal{A}(\Sigma')$, the following
hold:

(1)~$\mathcal{A}(\Sigma')$ is a pure cluster subalgebra of
$\mathcal{A}(\Sigma)$ if and only if there exists $I_0\subseteq X$ such that $id_{I_0,\emptyset}$ belongs to the corresponding regular
$\mathcal{D}$-class of $\mathcal{A}(\Sigma')$.

(2)~$\mathcal{A}(\Sigma')$ is a pure sub-cluster algebra of
$\mathcal{A}(\Sigma)$ if and only if there exists $I_1\subseteq \widetilde{X}$ such that
 $id_{\emptyset,I_1}$ belongs to the corresponding regular
$\mathcal{D}$-class of $\mathcal{A}(\Sigma')$.

(3)~$\mathcal{A}(\Sigma')$ is a rooted cluster
subalgebra of $\mathcal{A}(\Sigma)$ if and only if there exist $I_0\subseteq X$ and $I_1\subseteq \widetilde{X}$ such that $id_{I_0,I_1}$ belongs to  the corresponding regular
$\mathcal{D}$-class of $\mathcal{A}(\Sigma')$ and $b_{xy}=0$ for $x\in X\setminus
(I_0\cup I_1)$ and $y\in I_1$ in $\widetilde{B}$.
\end{Cor}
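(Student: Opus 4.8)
The plan is to reduce all three parts to the bijection $\varphi$ of Theorem \ref{number} together with a single observation about which idempotents lie in a given regular $\mathcal{D}$-class. First I would record the key auxiliary equivalence: if $D$ denotes the regular $\mathcal{D}$-class associated with $\mathcal{A}(\Sigma')$ under $\varphi$, then for every mixing-type sub-seed $\Sigma_{J_0,J_1}$ of $\Sigma$ one has $id_{J_0,J_1}\in D$ if and only if $\Sigma_{J_0,J_1}\cong\Sigma'$ in \textbf{Seed}. To see this I would fix, via Proposition \ref{basiclem} and the construction of $\varphi$, a presentation $\Sigma'\cong\Sigma_{I_0,I_1}$ with $D=D_{id_{I_0,I_1}}$; then $id_{J_0,J_1}\in D$ reads $id_{J_0,J_1}\,\mathcal{D}\,id_{I_0,I_1}$, which by Lemma \ref{idem}(2) is equivalent to $\Sigma_{J_0,J_1}\cong\Sigma_{I_0,I_1}\cong\Sigma'$. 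Corollary \ref{id} ensures that $D$ actually contains idempotents of the form $id_{J_0,J_1}$, so this equivalence has content.

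With this equivalence in hand, parts (1) and (2) would follow by direct translation. For (1) I would use that $\mathcal{A}(\Sigma')$ is, by definition, a pure cluster subalgebra precisely when $\Sigma'\cong\Sigma_{I_0,\emptyset}$ for some $I_0\subseteq X$, i.e.\ when $\Sigma'$ arises by freezing the variables in $I_0$ (so that $I_1=\emptyset$); the auxiliary equivalence then turns this into the existence of an idempotent $id_{I_0,\emptyset}\in D$. Part (2) I would handle symmetrically, with freezing replaced by deletion: $\mathcal{A}(\Sigma')$ is a pure sub-cluster algebra iff $\Sigma'\cong\Sigma_{\emptyset,I_1}$ for some $I_1\subseteq\widetilde{X}$, equivalently iff $id_{\emptyset,I_1}\in D$ for some such $I_1$.

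For part (3) the additional input is Theorem \ref{rooted cluster subalgebra}, which detects rooted cluster subalgebras not through the abstract isomorphism type of the sub-seed but through a property of its embedding in $\Sigma$, namely $b_{xy}=0$ for all $x\in X\setminus(I_0\cup I_1)$ and $y\in I_1$. I would therefore combine that theorem with the auxiliary equivalence: $\mathcal{A}(\Sigma')$ is a rooted cluster subalgebra iff there is a mixing-type sub-seed $\Sigma_{I_0,I_1}$ with $\Sigma'\cong\Sigma_{I_0,I_1}$ obeying the vanishing condition, and the auxiliary equivalence rewrites ``such a sub-seed exists'' as ``an idempotent $id_{I_0,I_1}\in D$ satisfying $b_{xy}=0$ for $x\in X\setminus(I_0\cup I_1)$, $y\in I_1$ exists'', which is exactly the statement of (3).

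The point to be careful about, and what I regard as the main obstacle, is that the vanishing condition in (3) is \emph{not} invariant across the idempotents of $D$: two distinct idempotents $id_{I_0,I_1},id_{I'_0,I'_1}\in D$ correspond to isomorphic but differently embedded sub-seeds, and only some of these embeddings need satisfy $b_{xy}=0$. This is precisely why (3) is phrased as an existence statement over the idempotents of $D$ and must be routed through Theorem \ref{rooted cluster subalgebra} rather than read off from the abstract $\mathcal{D}$-class; ensuring that the two existential quantifiers, over sub-seeds and over idempotents, match up is the heart of the verification, while parts (1) and (2) become essentially formal once the auxiliary equivalence is established.
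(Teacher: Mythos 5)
Your proposal is correct and takes essentially the same route as the paper: the paper gives no written proof, presenting the corollary as an immediate application of Theorem~\ref{number} together with Theorem~\ref{rooted cluster subalgebra}, and your auxiliary equivalence (that $id_{J_0,J_1}$ lies in the class $D=\varphi([\mathcal{A}(\Sigma')])$ if and only if $\Sigma_{J_0,J_1}\cong\Sigma'$, via Lemma~\ref{idem}(2) and Corollary~\ref{id}) is precisely the intended expansion of that citation. Your closing observation, that the vanishing condition in (3) is not invariant across the idempotents of $D$ and so must be quantified existentially through Theorem~\ref{rooted cluster subalgebra}, is an accurate reading of why (3) is phrased as it is.
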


In summary of the above discussion, for a given rooted cluster
algebra $\mathcal{A}(\Sigma)$, we now obtain the following
conclusions:

(1)~ In a regular $\mathcal D$-class $D$, each $\mathcal{R}$-class
$R$ contains just one idempotent in the form $id_{I_0,I_1}$ (by Lemma
\ref{Rclasswithidentity}, Corollary \ref{id} and Lemma
\ref{identity}(2)).

(2)~ In a regular $\mathcal D$-class $D$, each $\mathcal{L}$-class
$L$ contains at most one idempotent in the form $id_{I_0,I_1}$ (by Lemma
\ref{identity}(1)).

(3)~ The number of $\mathcal{H}$-classes containing identity in the form
$id_{I_0,I_1}$ is equal to that of the regular $\mathcal{R}$-classes
and is not larger than that of the regular $\mathcal{L}$-classes (by
$\mathcal H=\mathcal L\wedge\mathcal R$, (1) and (2) and Proposition
\ref{H}).

(4)~ The $\mathcal{H}$-class $H_{I_0,I_1}$ containing $id_{I_0,I_1}$
is isomorphic to the automorphism group of $\Sigma_{I_0,I_1}$ (by
Lemma \ref{idem}(3)).

(5)~ The number of isomorphism classes of sub-rooted cluster
algebras of type $(I_0,I_1)$ in $\mathcal A(\Sigma)$ is equal to
that of regular $\mathcal D$-classes in $\rm{End}_{par}(\Sigma)$ (by
Theorem \ref{number}).

\section{Sub-rooted cluster algebras from Riemannian surfaces}

In this part, we firstly characterize sub-rooted cluster algebras of rooted cluster algebras from oriented Riemanian surfaces via paunched surfaces. Moreover, we
 build the relation between sub-rooted cluster algebras from Riemannian surfaces and the corresponding surfaces by cutting and paunching at arcs, which is called {paunched surfaces} from the original surfaces, defined in the sequel, and then find the connection to regular $\mathcal D$-classes of the semigroup consisting of partial seed homomorphisms.

Cluster algebras arising form surfaces are introduced in \cite{fosth}\cite{fh}. Let $(S,M)$ be a Riemannian surface with $M$ the set of marked points. A tagged arc is an arc in which each end has been tagged in one of two ways, plain or notched, which satisfies some conditions. A tagged triangulation $T$ is a maximal collection of pairwise compatible tagged arcs in $(S,M)$. A lamination $L$ on $(S,M)$ is a finite collection of non-self-intersecting and pairwise non-intersecting curves in $S$, modulo isotopy relative to $M$, subject to some restrictions. For details, refer to \cite{fh}. A multi-lamination $\mathcal L$ is a finite family of laminations.

It is well known \cite{fh}  that the seeds of a cluster algebra $\mathcal {A}(S,M)$ are one-to-one correspondent to the tagged triangulations of $(S,M)$, as well as the exchangeable variables are one-to-one correspondent to the tagged arcs in $(S,M)$ and the frozen variables are one-to-one correspondent to multi-laminations.

In \cite{fh},  $\tau$ is defined as the map from the untagged arcs to the tagged arcs. More precisely, if $\gamma$ does not cut out a once-punctured monogon (i.e. a loop with a puncture point inside), then $\tau(\gamma)$ is $\gamma$ with both ends tagged plain. Otherwise, let $\gamma$ be a loop, based at a marked point $a$, cutting out a punctured monogon with the sole puncture $b$ inside it. Let $\beta$ be the unique arc connecting $a$ and $b$ and compatible with $\gamma$. Then $\tau(\gamma)$ is obtained by tagging $\beta$ plain at $a$ and notched at $b$.

\begin{Thm}(\cite{fosth},Theorem 7.11)
Assume that $(S,M)$ is not a closed surface with one or two punctures. Let $\mathcal A$ be a cluster algebra with $B(\mathcal A)=B(S,M)$. Then the cluster complex of $\mathcal A$ is isomorphic to the tagged arc complex.
\end{Thm}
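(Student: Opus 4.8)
The plan is to fix an initial tagged triangulation $T_0$ of $(S,M)$ and to identify its arcs with the initial cluster of $\mathcal{A}$; this identification is legitimate because the hypothesis $B(\mathcal A)=B(S,M)$ guarantees that the signed adjacency matrix $B(T_0)$ coincides with the exchange matrix of an initial seed $\Sigma_0$ of $\mathcal A$. The heart of the argument is the \emph{local} correspondence between the two combinatorial structures: flipping $T_0$ at a tagged arc $\gamma$ produces a new triangulation whose signed adjacency matrix is obtained from $B(T_0)$ by matrix mutation in the direction of $\gamma$. Granting the flip-versus-mutation compatibility established in \cite{fosth}, each single flip is matched with a single seed mutation at the corresponding cluster variable, so the two kinds of elementary moves run in parallel.

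Building on this local step, I would define a map $\Phi$ from the set of tagged triangulations to the set of seeds of $\mathcal A$ by propagating along flips: given $T$ reachable from $T_0$ by a sequence of flips, let $\Phi(T)$ be the seed reached from $\Sigma_0$ by the matching sequence of mutations. To see that $\Phi$ is well defined and surjective, I would invoke the flip-connectedness of the tagged arc complex, which holds precisely under the standing hypothesis that $(S,M)$ is not a closed surface with one or two punctures, together with the connectedness of the exchange graph of $\mathcal A$. Since every mutation sequence out of $\Sigma_0$ is realized by a flip sequence out of $T_0$ and conversely, $\Phi$ descends to a surjective morphism of simplicial complexes sending tagged triangulations to clusters and tagged arcs to cluster variables.

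The decisive and hardest step is injectivity, i.e. that distinct tagged arcs are sent to distinct cluster variables and, more globally, that $\Phi$ does not collapse the tagged arc complex onto a proper quotient. For this I would compute the denominator vector of the cluster variable attached to an arc $\gamma$ and identify it with the vector of tagged intersection numbers of $\gamma$ with the arcs of $T_0$; because these intersection vectors separate the tagged arcs of $(S,M)$, distinct arcs yield distinct cluster variables, so $\Phi$ is injective on vertices. Combined with the facet-by-facet bijection between tagged triangulations and clusters supplied by the flip--mutation matching, this upgrades $\Phi$ to an isomorphism of the tagged arc complex onto the cluster complex.

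I expect the main obstacle to be exactly this passage from a \emph{local} isomorphism of flip/mutation structures to a \emph{global} one: a priori the flip graph could wrap around and identify triangulations whose images in the exchange graph are distinct, or conversely force unwanted coincidences of cluster variables. The excluded cases---closed surfaces with one or two punctures---are precisely where flip-connectedness of the tagged arc complex fails, so the topological hypothesis is what rescues global well-definedness, while the denominator vector and intersection-number computation is what rescues global injectivity.
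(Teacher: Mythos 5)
First, a remark on the comparison itself: this statement is not proved anywhere in the paper you were given --- it is Theorem 7.11 of Fomin--Shapiro--Thurston, quoted from \cite{fosth} purely as background for Section 6. There is therefore no internal proof to measure your attempt against, and I can only assess your sketch against the argument actually given in \cite{fosth}.

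Your outline correctly isolates the three standard ingredients (flip/mutation compatibility of signed adjacency matrices, connectivity of the tagged flip graph, and a separation argument on vertices), but two of your key steps are wrong or ungrounded as stated. First, your diagnosis of the excluded cases is inaccurate: flip-connectedness of the tagged arc complex fails only for a closed surface with exactly one puncture (where the complex splits into two isomorphic components, all tags plain versus all tags notched); for a closed surface with two punctures the tagged flip graph \emph{is} connected, and that case is excluded in \cite{fosth} for a different, more technical reason arising in the injectivity part of their argument, resolved only in later work. So the topological hypothesis does not play the single role you assign to it. Second, and more seriously, your ``decisive and hardest step'' rests on the identity between denominator vectors and tagged intersection numbers with respect to an arbitrary initial tagged triangulation. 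That identity is itself a substantial theorem rather than a computation, it is not the route taken in \cite{fosth}, and it is in fact false in general for punctured surfaces: already in type $D$ (once-punctured polygons) there are initial clusters and tagged arcs for which the denominator vector does not agree with the intersection-number vector. Since your entire global injectivity argument hangs on this identification, the proposal has a genuine gap exactly at the point you yourself flag as critical; the local flip-versus-mutation matching and the connectivity statement are fine, but they only yield a surjection of the tagged arc complex onto the cluster complex, and the passage to an isomorphism requires a different mechanism than the one you propose.
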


\begin{Thm}(\cite{fh},Theorem 13.6)\label{un}
For a fixed tagged triangulation $T$, the map $L\rightarrow (b_{\gamma,L}(T))_{\gamma\in T}$ is a bijection between integral unbounded measured laminations and $\mathbb Z^n$.
\end{Thm}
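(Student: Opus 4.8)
The plan is to prove the claimed bijection through Thurston's theory of shear coordinates, by relating them to the normal (intersection) coordinates of a lamination and showing that the resulting change of variables is a bijection of lattices. First I would fix the setup: each arc $\gamma \in T$ is the diagonal of the quadrilateral $Q_\gamma$ formed by the two triangles of $T$ adjacent to $\gamma$, and for an integral unbounded measured lamination $L$ in minimal position with respect to $T$ the shear coordinate $b_{\gamma,L}(T)$ is the signed count of strands of $L$ crossing $\gamma$, each strand contributing $+1$ or $-1$ according to whether it traverses $Q_\gamma$ in an $S$-shape or a $Z$-shape with respect to $\gamma$. Because $L$ is integral and minimal, only finitely many strands meet each $\gamma$ and each contributes an integer, so the assignment $L \mapsto (b_{\gamma,L}(T))_{\gamma \in T}$ is well defined and lands in $\Z^n$ with $n = \#T$.

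The key reduction is that an integral lamination in minimal position is already determined by its normal coordinates, namely its geometric intersection numbers with the arcs of $T$: inside each triangle $L$ is a disjoint union of three families of parallel strands, one family joining each pair of sides, and the three corner-counts in every triangle, subject to the matching condition along each interior edge and the obvious triangle inequalities, give a bijection between integral laminations and the integer points of an explicit piecewise-linear cone. It therefore suffices to show that the map sending these normal data to the shear vector is a bijection onto $\Z^n$. For injectivity I would note that within $Q_\gamma$ the shear coordinate $b_{\gamma,L}(T)$ is a piecewise-linear function of the corner-counts of the two adjacent triangles that records both how many strands cross $\gamma$ and in which direction they turn; reconstructing the corner-counts quadrilateral by quadrilateral from the shear vector, one recovers the normal data uniquely, so equal shear vectors force $L = L'$.

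For surjectivity I would run this reconstruction forward: given an arbitrary vector $(a_\gamma)_{\gamma \in T} \in \Z^n$, read off from each entry both the number and the turning direction of the strands crossing $\gamma$, assemble the corresponding corner-counts in each triangle, and build $L$ by laying down the prescribed strands. The one thing that must be checked is that the two triangles sharing an interior arc $\gamma$ glue consistently, which holds because $b_{\gamma,L}(T)$ is computed symmetrically from the two sides of $Q_\gamma$; the unbounded hypothesis is exactly what permits strands to terminate on the boundary and so lets the coordinates fill all of $\Z^n$ rather than a proper sublattice. The main obstacle is that the shear map is only piecewise-linear in the normal coordinates, so its invertibility must be established across the several sign chambers rather than by a single linear-algebra computation; the delicate bookkeeping lies in matching the $S$/$Z$ sign conventions so that the local turning data is recovered unambiguously from the integer $a_\gamma$, and in confirming that no global relation among the shear coordinates survives in the punctureless case, which is what makes the image all of $\Z^n$.
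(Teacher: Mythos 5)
The paper does not prove this statement: Theorem \ref{un} is quoted verbatim from Fomin--Thurston \cite{fh} (their Theorem 13.6) and used as a black box, so the only proof to compare yours against is the one in that source. Your outline follows the same broad strategy as Thurston's theory of shear coordinates --- reconstruct the lamination from the integer vector --- but two of its steps are genuinely gapped. First, the theorem concerns \emph{tagged} triangulations, hence surfaces that may carry punctures. A curve of $L$ spiralling into a puncture crosses an adjacent arc $\gamma$ infinitely many times, so your opening claim that ``only finitely many strands meet each $\gamma$'' fails, the normal (intersection-number) coordinates to which you reduce are not even defined for such curves, and already the well-definedness of $b_{\gamma,L}(T)$ requires the cancellation of eventually alternating $S$/$Z$ contributions together with the tagged-versus-ideal dictionary of \cite{fh}. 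As written, your argument addresses at most the punctureless ideal-triangulation case (Theorem 12.3 of \cite{fh}), not Theorem 13.6.

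Second, even in the punctureless case, both your injectivity and your surjectivity arguments rest on ``reading off from $a_\gamma$ the number and the turning direction of the strands crossing $\gamma$.'' This cannot be done locally: $a_\gamma$ is the \emph{difference} of the $S$-count and the $Z$-count, strands cutting a corner of the quadrilateral $Q_\gamma$ cross $\gamma$ while contributing $0$, and a single curve can cross the same arc with both shapes once the surface has enough topology. Hence the single integer $a_\gamma$ does not determine $|L\cap\gamma|$, and the quadrilateral-by-quadrilateral reconstruction of the corner-counts is not well defined; this is precisely the ``delicate bookkeeping'' you flag at the end, and it is the actual content of the theorem rather than a detail. The proof in \cite{fh} inverts the coordinate map by a global construction (building the lamination directly from the prescribed integer vector, using elementary laminations associated to the arcs of $T$ and resolving their crossings), and some such global device is what your sketch is missing.
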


In Theorem \ref{un}, $b_{\gamma,L}(T)=b_{\gamma}(T,L)$ is the {\bf Shear coordinate} of $L$ with respect to the triangulation $T$, which is defined as a sum of contributions from all intersections of curves in $L$ with the arc $\gamma$. For details, see \cite{fh}.

This theorem means that the laminations on a surface $(S,M)$ are one-by-one correspondence to the frozen variables in the cluster algebras associate with $(S,M)$. So, we also use $x$ to denote the Lamination on $(S,M)$ in case $x$ is a frozen variable.

\begin{Def}\label{paunch}
Assume that $T$ is a tagged triangulation of $(S,M)$, $x$ is an exchangeable cluster variables in the seed $\Sigma(T)$ from $T$. Without loss of confusion, we also write $x$ as the corresponding tagged arc in $T$. Let $(S_x,M)$ be the new Riemaninan surface which is constructed from $(S,M)$ through cutting along the arc $x$. More precisely, there are five cases respectively, see Figure 1 corresponding to (1) $x$ is not a loop with the two end points are on the boundary, (2) $x$ is a loop with the endpoint on the boundary, (3) $x$ connects one marked point on the boundary and one puncture, (4) $x$ is not a loop with the end points are two punctures, (5) $x$ is a loop with the endpoint is a puncture. For details, see \cite{MP}, \cite{QZ}. In order to use such surface in the five cases conveniently, we call it the {\bf $x$-paunched surface} from $(S,M)$.
\end{Def}

\begin{figure}[h] \centering
  \includegraphics*[4,6][440,179]{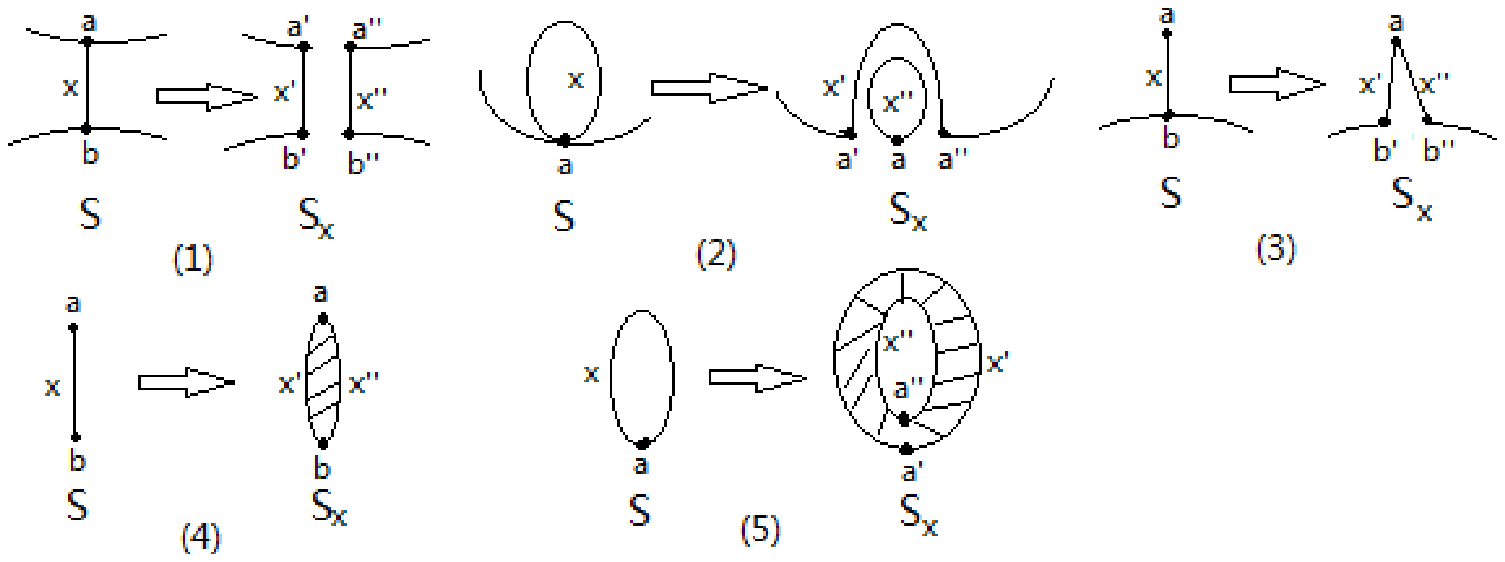}

 Figure 1
\end{figure}

 \begin{figure}[h] \centering
   \includegraphics*[5,16][434,150]{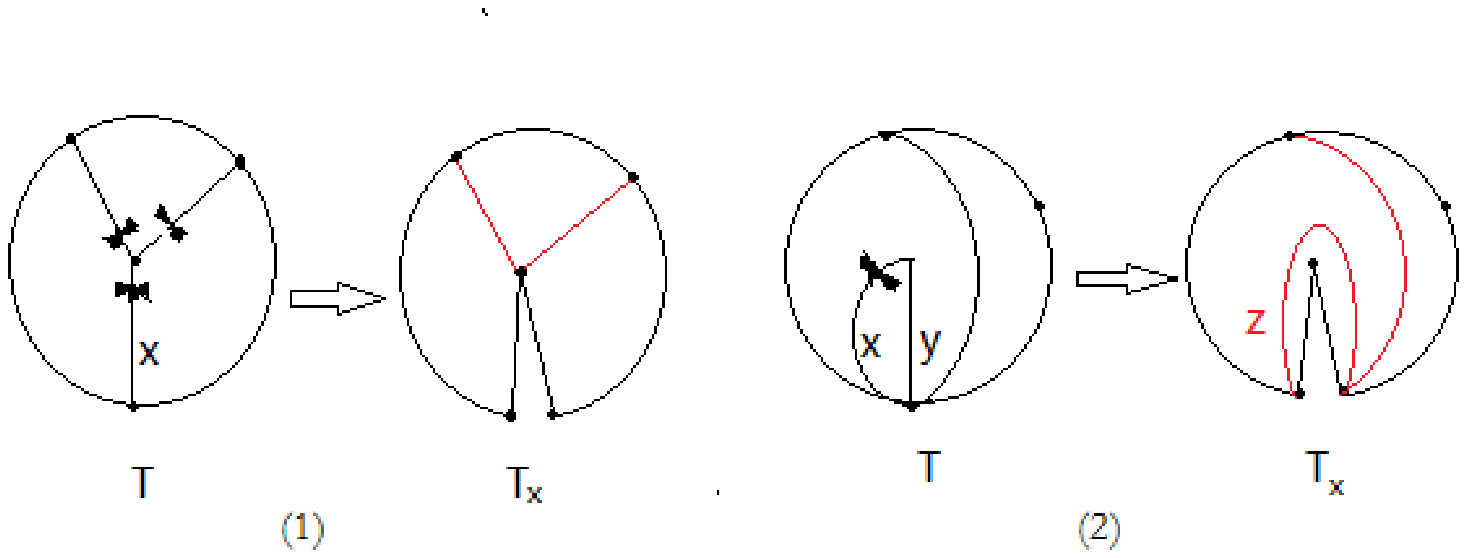}

    Figure 2
\end{figure}

Denote by $T_x$ the corresponding tagged triangulation of $(S_x,M)$. More precisely, assume that $T^{o}$ is the ideal triangulation associated with $T$ and $x^{o}$ is the arc in $T^{o}$ corresponding to $x$. If $x^{o}$ is not in a self-fold triangle, then $T_x$ is obtained by delete $x$ and making all arcs are tagged plain at the end points of $x$. See Figure 2 (1); if $x^{o}$ in a self-fold triangle in $T^{o}$, assume that $y^{o}$ is the another arc of the self-fold triangle and $y$ is the tagged arc in $T$ corresponding to $y^{o}$, then $T_x=(T\setminus \{x,y\})\cup \{z\}$, where $z$ is the unfolded arc in the self-fold triangle, see Figure 2 (2).

Now we give the corresponding lamination $L_x$ when we freeze an exchangeable cluster variable $x$.

In case (1), when $x$ is not a loop with the two end points are on the boundary. Choose an unmarked point $a^+$
($a^{-}$ respectively) on the boundary of $S$ which lying on the counterclockwise (clockwise respectively)
orientation of $a$. Similarly, we can pick up $b^+$and $b^{-}$ on the boundary. Let $L^+_x$ be the curve (up to isotopy relative to $M$) in $S_x$ connecting $a^+$ and $b^{-}$ such that the arcs $\widehat{a'a^+}L^+_x\widehat{b^{-}b'}$ and $x'$ are isotopy relative to $M$, where $\widehat{a'a^+}$ means the arc on the boundary of $S_x$ which from $a'$ to $a^+$. Dually, we can define curve $L^-_x$. In this case, setting $L_x=\{L^+_x,L^-_x\}$. See Figure 3 (1).

In case (2), when $x$ is a loop with the endpoint on the boundary. Choose an unmarked point $a^+$ ($a^{-}$ respectively) on the boundary of $S$ which lying on the clockwise (counterclockwise respectively) orientation of $a'$ ($a''$ respectively). Let $L^+_x$
be the curve (up to isotopy relative to $M$) in $S_x$ connecting $a^+$ and $a^{-}$ such that the arcs $\widehat{a'a^+}L^+_x\widehat{a^{-}a''}$ and $x'$ are isotopy relative to $M$, where $\widehat{a'a^+}$ means the arc on the boundary of $S_x$ which from $a'$ to $a^+$. Let $L^{-}_x$ be a loop (up to isotopy relative to $M$) in the area enclosed by $x''$ such that there are no marked points between $L^{-}_x$ and $x''$ excepting $a$. In this case, setting $L_x=\{L^+_x,L^-_x\}$. See Figure 3 (2).

In case (3), when $x$ connects one marked point on the boundary and one puncture. Choose an unmarked point $a^+$ ($a^{-}$ respectively) on the boundary of $S$ which lying on the counterclockwise (clockwise respectively) orientation of $a'$ ($a''$ respectively). Let $L^+_x$
be the curve (up to isotopy relative to $M$) in $S_x$ connecting $a^+$ and $a^{-}$ such that the arcs $\widehat{a'a^+}L^+_x\widehat{a^{-}a''}$ and $x'$ are isotopy relative to $M$, where $\widehat{a'a^+}$ means the arc on the boundary of $S_x$ which from $a'$ to $a^+$. See Figure 3 (3).

In case (4), when $x$ is not a loop with the end points are two punctures. Let $L_x$ be a loop (up to isotopy relative to $M$) surroundings $x'$ and $x''$ such that there are no marked points between $L_x$ and the loop formed by $x'$ and $x''$ excepting $a$ and $b$. See Figure 3 (4).

In case (5), when $x$ is a loop with the endpoint is a puncture. Let $L^{+}_x$ be a loop (up to isotopy relative to $M$) surroundings $x'$ such that there are no marked points between $L^{+}_x$ and $x^+$ and $x'$ excepting $a'$. Let $L^{-}_x$ be a loop (up to isotopy relative to $M$) in the area enclosed by $x''$ such that there are no marked points between $L^{-}_x$ and $x''$ excepting $a''$. In this case, setting $L_x=\{L^+_x,L^-_x\}$. See Figure 3 (5).

\begin{figure}[h] \centering
  \includegraphics*[3,2][380,178]{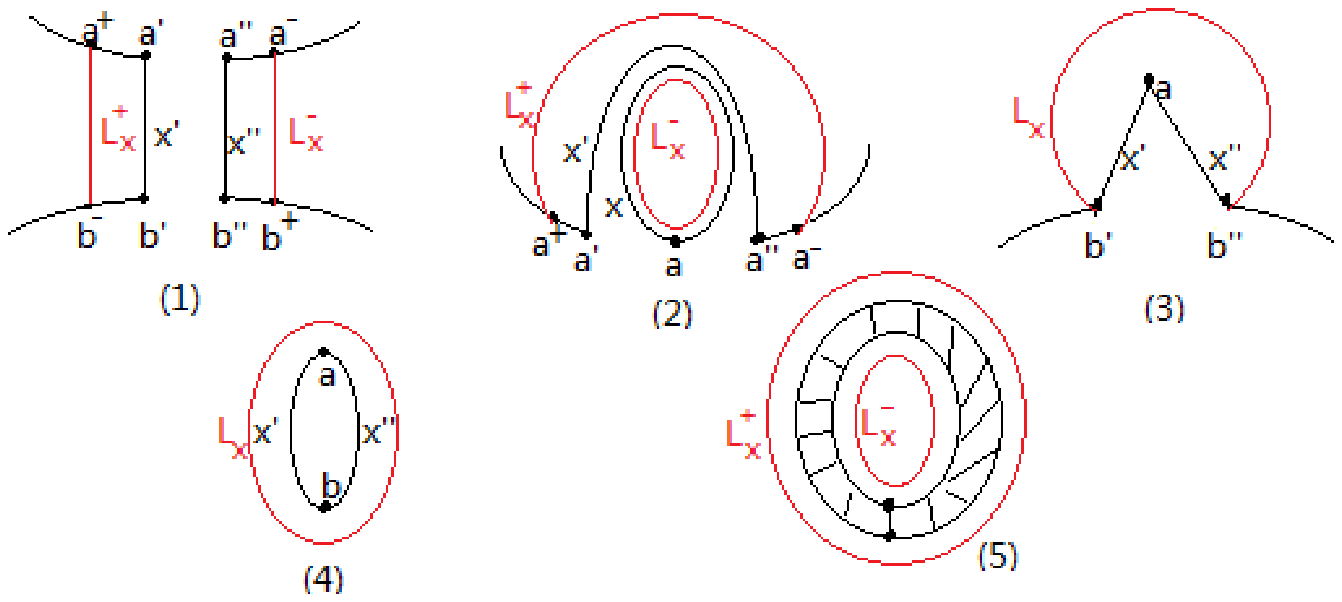}

 Figure 3
\end{figure}

In case $x^{o}$ is not contained in a self-folded triangle and $x$ has just one endpoint tagged notch (we regard a loop $x$ with the endpoints are tagged notch to this case). Let $\gamma$ be a curve in $L$ for some $L\in \mathcal L$. If $\gamma$ does not spiral into the endpoints of $x$, assume that $x$ divides $\gamma$ into $\gamma_1,\cdots,\gamma_k$ for some $k$, define $\gamma^x$ to be subset of $\{\gamma_i\;|\;i=1,\cdots,k\}$ obtained by deleting the curves which are not allowed in a lamination. If $\gamma$ spirals into an endpoint of $x$, let $\gamma'$ be the curve spirals into the same point as $\gamma$ with opposite direction, assume that $x$ divides $\gamma'$ into $\cdots,\gamma'_1,\cdots,\gamma'_k,\cdots$, define $\gamma^x$ to be the subset of $\{\gamma'_i\;|\;i=\cdots,1,\cdots,k,\cdots\}$ obtained by deleting the curves which are not allowed in a lamination. Note that since $\gamma$ spirals into an endpoint of $x$, there are only finite curves in $\gamma^x$. Denote $L^{(x)}=\bigcup\limits_{\gamma\in L}\gamma^x$.

In case $x^{o}$ is not contained in a self-folded triangle and both endpoints of $x$ tagged notch or tagged plain. Let $\gamma$ be a curve in $L$ for some $L\in \mathcal L$. If $\gamma$ does not spiral into the endpoints of $x$, assume that $x$ divides $\gamma$ into $\gamma_1,\cdots,\gamma_k$ for some $k$, define $\gamma^x$ to be subset of $\{\gamma_i\;|\;i=1,\cdots,k\}$ obtained by deleting the curves which are not allowed in a lamination. If $\gamma$ spirals into an endpoint of $x$, assume that $x$ divides $\gamma$ into $\cdots,\gamma_1,\cdots,\gamma_k,\cdots$, define $\gamma^x$ to be the subset of $\{\gamma_i\;|\;i=\cdots,1,\cdots,k,\cdots\}$ obtained by deleting the curves which are not allowed in a lamination. Note that since $\gamma$ spirals into an endpoint of $x$, there are only finite curves in $\gamma^x$. Denote $L^{(x)}=\bigcup\limits_{\gamma\in L}\gamma^x$.

In case $x^{o}$ is contained in a self-folded triangle and $x^{o}$ is folded. Let $\gamma$ be a curve in $L$ for some $L\in \mathcal L$. If $\gamma$ does not spiral into the endpoints of $x$, assume that $z$ divides $\gamma$ into $\gamma_1,\cdots,\gamma_k$ for some $k$, define $\gamma^x$ to be subset of $\{\gamma_i\;|\;i=1,\cdots,k\}$ obtained by deleting the curves which are not allowed in a lamination. If $\gamma$ spirals into an endpoint of $x$, let $\gamma'$ be the curve spirals into the same point as $\gamma$ with opposite direction, assume that $x$ divides $\gamma'$ into $\cdots,\gamma'_1,\cdots,\gamma'_k,\cdots$, define $\gamma^x$ to be the subset of $\{\gamma'_i\;|\;i=\cdots,1,\cdots,k,\cdots\}$ obtained by deleting the curves which are not allowed in a lamination. Note that since $\gamma$ spirals into an endpoint of $x$, there are only finite curves in $\gamma^x$. Denote $L^{(x)}=\bigcup\limits_{\gamma\in L}\gamma^x$.

In case $x^{o}$ is contained in a self-folded triangle and $x^{o}$ is unfolded. Let $\gamma$ be a curve in $L$ for some $L\in \mathcal L$. If $\gamma$ does not spiral into the endpoints of $x$, assume that $x$ divides $\gamma$ into $\gamma_1,\cdots,\gamma_k$ for some $k$, define $\gamma^x$ to be subset of $\{\gamma_i\;|\;i=1,\cdots,k\}$ obtained by deleting the curves which are not allowed in a lamination. If $\gamma$ spirals into an endpoint of $x$, assume that $x$ divides $\gamma$ into $\cdots,\gamma_1,\cdots,\gamma_k,\cdots$, define $\gamma^x$ to be the subset of $\{\gamma_i\;|\;i=\cdots,1,\cdots,k,\cdots\}$ obtained by deleting the curves which are not allowed in a lamination. Note that since $\gamma$ spirals into an endpoint of $x$, there are only finite curves in $\gamma^x$. Denote $L^{(x)}=\bigcup\limits_{\gamma\in L}\gamma^x$.

In both cases, let $\mathcal L^{(x)}=\{L^{(x)}|\;L\in \mathcal L\}$.

Figure 4 illustrates the case that $x^{o}$ is contained in a self-folded triangle and $x^{o}$ is unfolded and $\gamma$ spirals into the endpoints of $x$.

\begin{figure}[h] \centering
  \includegraphics*[2,15][440,180]{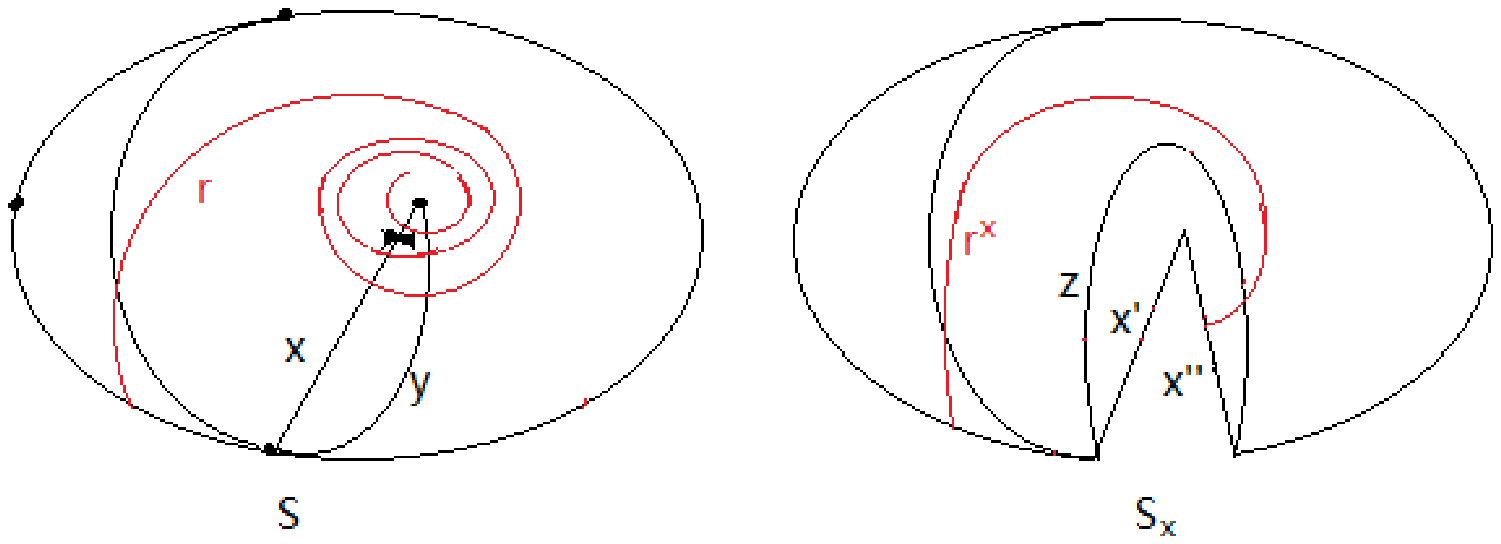}

 Figure 4
\end{figure}

We have the following observations,
\begin{Lem}\label{delete} With the assumptions as above, let $(S,M)$ be a marked surface, $T$ be a tagged triangulation and $\mathcal L$ be a multi-lamination of $(S,M)$. Denote by $\Sigma(S,M,\mathcal L,T)$ the seed determined by $\mathcal L$ and $T$. Let $x$ be a cluster variable of $\Sigma(S,M,\mathcal L,T)$. Then\\
(1)~$\mathcal{A} (\Sigma(S,M,\mathcal L,T)_{\emptyset, \{x\}})$ is a cluster algebra arising from certain surface, more explicitly,

(a)~If $x$ is a frozen variable, then $\mathcal{A} (\Sigma(S,M,\mathcal L,T)_{\emptyset, \{x\}})=\mathcal{A} (\Sigma(S,M,\mathcal L\setminus\{x\}, T))$;

(b)~If $x\in T$ is a tagged arc, then $\mathcal{A} (\Sigma(S,M,\mathcal L,T)_{\emptyset, \{x\}})=\mathcal{A} (\Sigma(S_x,M,\mathcal {L}^x, T_x))$;\\
(2)~$\mathcal{A} (\Sigma(S,M,\mathcal L,T)_{\{x\},\emptyset})$ is a cluster algebra arising from the surface $(S_x,M)$, more explicitly, $$\mathcal{A} (\Sigma(S,M,\mathcal L,T)_{ \{x\},\emptyset})=\mathcal{A} (\Sigma(S_x,M,\mathcal {L}^x\cup\{L_x\},T_x)).$$
\end{Lem}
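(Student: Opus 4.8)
The plan is to turn each equality of cluster algebras into an equality of seeds and then into local, arc-by-arc computations on the surface. By Proposition \ref{basiclem} it suffices, in every case, to produce a seed isomorphism between the mixing-type sub-seed on the left-hand side and the surface seed on the right-hand side. Such an isomorphism consists of a bijection of extended clusters (tagged arcs $\leftrightarrow$ exchangeable variables, and laminations $\leftrightarrow$ frozen variables via Theorem \ref{un}) together with the matching of the two extended exchange matrices entry by entry. The crucial simplification is that, by Definition \ref{mixingseed}, forming $\Sigma_{\emptyset,\{x\}}$ merely deletes the row and column of $x$, while forming $\Sigma_{\{x\},\emptyset}$ merely moves $x$ from the exchangeable block to the frozen block; neither operation alters any surviving entry $b_{\gamma\delta}$. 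Hence the whole content of the lemma is the assertion that these surviving entries are computed identically from $(S,M,T,\mathcal L)$ and from the cut data $(S_x,M,T_x,\mathcal L^x)$ (with the extra column $L_x$ in part (2)).

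Part (1)(a) is immediate: a frozen variable is the lamination recorded in a column of $\widetilde B$, whose shear coordinates depend only on $T$ and on that single lamination, so deleting the column is literally deleting the lamination from $\mathcal L$ while leaving the arc--arc block and the other lamination columns untouched. For part (1)(b) I would first fix the bijection of extended clusters: $T_x=T\setminus\{x\}$ after the tagging changes of Figure 2(1), or $(T\setminus\{x,y\})\cup\{z\}$ with $y\leftrightarrow z$ in the self-folded case of Figure 2(2), and $\mathcal L^x=\{L^{(x)}\mid L\in\mathcal L\}$ transports each $L$ to $L^{(x)}$. The verification then splits in two. For arcs $\gamma,\delta\in T\setminus\{x\}$ the signed adjacency $b_{\gamma\delta}$ is preserved because cutting along $x$ replaces each triangle having $x$ as a side by a triangle with a boundary segment in its place, so every triangle contributing to $b_{\gamma\delta}$ survives with unchanged orientation; this is the standard cutting construction of \cite{MP,QZ}, with the self-folded triangle unfolded and inspected separately. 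For $\gamma\in T\setminus\{x\}$ and $L\in\mathcal L$ the shear coordinate must survive the combined cut-and-split operation: after passing to $\gamma^x$ the surviving pieces cross the quadrilateral around $\gamma$ exactly as before, while the pieces discarded in forming $\gamma^x$ are the contractible or boundary-parallel curves, which contribute $0$ to every shear coordinate against a surviving arc; the spiraling subcase is absorbed by replacing $\gamma$ with the opposite-direction curve $\gamma'$ as in the definition of $L^{(x)}$.

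Part (2) builds directly on (1)(b). Since the rows indexed by $X\setminus\{x\}$ of $\widetilde B$ are retained verbatim, the arc--arc block and the old lamination columns are transported to $(S_x,M,T_x,\mathcal L^x)$ exactly as above; it remains only to recognize the retained column of $x$ as a new frozen variable. The one new identity to establish is
\[
b_{\gamma x}=b_{\gamma,L_x}(T_x)\qquad\text{for every }\gamma\in T_x,
\]
which says precisely that $L_x$ is the elementary lamination of the cut arc $x$ inside $S_x$. I would prove it by running through the five configurations of Figure 1 --- $x$ joining two boundary points, a loop at the boundary, a marked point and a puncture, two punctures, and a loop at a puncture --- matching in each case the $\pm1$ contributions of $L_x$ (or of the pair $\{L_x^+,L_x^-\}$) at the arcs meeting the endpoints of $x$ against the entries $b_{\gamma x}$.

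The main obstacle is precisely this last family of case checks, together with the self-folded and punctured subcases of (1)(b). The delicacy lies entirely in the bookkeeping of tags (plain versus notched), of the spiraling direction of lamination curves into punctures, and of which split pieces are ``not allowed'' in a lamination; moreover the five shapes of $L_x$ behave genuinely differently (a single curve versus a pair, arcs joining boundary points versus loops encircling an arc), so no single computation covers them all. Once these local identities are in place, the equalities of cluster algebras follow formally from Definition \ref{mixingseed} and Proposition \ref{basiclem}.
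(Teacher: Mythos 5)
Your proposal is correct and follows essentially the same route as the paper: both reduce the statement to an entry-by-entry identification of the extended exchange matrices, split into the arc--arc block (preserved because cutting along $x$ only replaces the triangles containing $x$ by ones with a boundary segment, with the self-folded triangle treated separately via its unfolded arc), the arc--lamination block (preserved because the pieces discarded in forming $\gamma^x$ contribute nothing to any shear coordinate, with the spiraling case handled by the opposite-direction curve), and, for part (2), the new column identity $b_{\gamma x}=b_{\gamma,L_x}(T_x)$ checked over the five configurations of Figure 1. The only cosmetic difference is that you route the conclusion through Proposition \ref{basiclem}, whereas the paper observes the matrices are literally equal under the natural identifications.
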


\begin{proof}
(1)~(a)~ It is clear since the frozen variable $x$ correspondence to the lamination $x$.

(b)~ We can write  $\widetilde{B}(T)=\left(\begin{array}{c}
B(T)\\
(b_{yL})_{y\in T, L\in \mathcal L}
\end{array}\right)$ and $\widetilde{B}(T_x)=\left(\begin{array}{c}
B(T_x)\\
(b'_{yL^{(x)}})_{y\in T_x, L^{(x)}\in \mathcal L^{(x)}}
\end{array}\right)$.

Let $T^{o}$ be the ideal triangulation associate to $T$. By the definition, the exchange matrix $B(T)$ is constructed by $T^{o}$. Let $T_x^{o}$ be the ideal triangulation associate to $T_x$. If $x^{o}$ is not in a self-folded triangle, then $T_x^{o}$ is obtained from $T^{0}$ by deleting $x^{o}$. Thus, for any two arcs $w, w'\neq x^{o}$ in $T^{o}$, we have $b_{ww'}(T^{o})=b_{ww'}(T_x^{o})$. If $x^{o}$ is in a self-folded triangle, assume $z'$ is the folded arc in the self-folded triangle. According to our construction, we have $T^{o}_x=T^{o}\setminus\{z'\}$. Thus, for any two arcs $w, w'\neq z'$ in $T^{o}$, we have $b_{ww'}(T^{o})=b_{ww'}(T_x^{o})$. Therefore, in both cases, we have $B(T)_{\emptyset,\{x\}}=B(T_x)$.

Let $L$ be a lamination in $\mathcal L$, $\gamma\in L$ and $y(\neq x)$ be an arc in $(S,M)$ which does not intersect with $x$.

In case $x, y$ share a puncture $P$ as their common endpoints and $\gamma$ spirals into $P$. Assume that $x$ divides $\gamma$ into $\cdots,\gamma_1,\cdots,\gamma_k,\cdots$. If $\tau^{-1}(x)$ and $\tau^{-1}(y)$ are not in a self-folded triangle, since the arcs in $\{\cdots,\gamma_1,\cdots,\gamma_k,\cdots\}\setminus \gamma^x$ do not contribute to Shear coordinate. Thus, the intersections of $\gamma^{x}$ and $y$ are locally the same as the intersections of $\gamma$ and $y$. If $\tau^{-1}(x)$ and $\tau^{-1}(y)$ are in a self-folded triangle, assume that $z$ is the unfolded arc in the self-folded triangle, since $b_{yL}$ is defined via the arc $z$. By the construction of $\gamma^x$, the intersections of $\gamma^{x}$ and $z$ are locally the same as the intersections of $\gamma$ and $z$.

Otherwise, $x, y$ share a puncture $P$ as their common endpoints but $\gamma$ does not spiral into $P$ or $x, y$ do not share a puncture as their common endpoints. Note that for any curve $\gamma\in L$ which does not spirals into the endpoints of $x$ and arc $y$ which does not share a puncture as there common endpoints, according to our definition of $\gamma^x$, it can been seen that the intersections of $\gamma^{x}$ and $y$ are locally the same as the intersections of $\gamma$ and $y$.

Thus, we have $b_{yL}=b'_{yL^{(x)}}$. It follows $(\widetilde{B}(T))_{\emptyset, x}=\widetilde{B}(T_x)$. Therefore, the result holds.

(2)~
We can write  $\widetilde{B}(T)=\left(\begin{array}{c}
B(T)\\
(b_{yL})_{y\in T, L\in \mathcal L}
\end{array}\right)$ and $\widetilde{B}(T_x)=\left(\begin{array}{c}
B(T_x)\\
(b'_{yL^{(x)}})_{y\in T_x, L^{(x)}\in \mathcal L^{(x)}}\\
(b'_{yL_x})_{y\in T_x}
\end{array}\right)$.

By discussion on the corresponding lamination $L_x$ while freezing an exchangeable variable $x$, analysis case by case (See Figure 3), we can see that the Shear coordinate $b_{y}(T, L_x)$ contributes $+1$ ($-1$ respectively) if and only if there exists a triangle $\Delta$ such that $b_{yx}=1$ ($b_{yx}=-1$ respectively). If the Shear coordinate $b_{y}(T, L_x)$ contributes $0$, then there exists no triangle such that $b_{yx}=1$ or $b_{yx}=-1$. For example, in the case (1), we have illustrated Figure 5 for explanation. In fact, $b_{y}(T, L_x)=b_{yx}=0$, $b_{z}(T, L_x)=b_{zx}=1$ and $b_{z'}(T, L_x)=b_{z'x}=-1$.

Thus, $b_{y x}=b'_{y L_x}$ for all $y\in T$ due to the definition of $b_{yx}$ and $b'_{yL_x}$. Combining the proof of (b), thus, we have $(\widetilde{B}(T))_{x,\emptyset}=\widetilde{B}(T_x)$. Therefore, the result follows.
\end{proof}

\begin{figure}[h] \centering
  \includegraphics*[14,80][107,174]{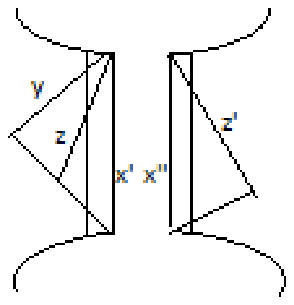}

 Figure 5
\end{figure}

For a set $I$ of some arcs in $T$ (or say, $I\subset X$), denote by $(S_I,M)$ the new Riemaninan surface which is constructed from $(S,M)$ through cutting all arcs $x\in I$, equivalently which is obtained recursively through constructing $x$-paunched surfaces at $|I|$-times. We call $(S_I,M)$ the {\bf $I$-paunched surface} from $(S,M)$.

Denote by $T_I$ the tagged triangulation of $(S_I, M)$ constructed from $T_x$ for all $x\in I$ step by step. For each arc $\gamma$ in $S$, let $I$ divide $\gamma$ into $\cdots,\gamma_1,\cdots,\gamma_k,\cdots$ (An example is given in Figure 6). Let $\gamma^{I}$ be the subset of $\{\cdots,\gamma_1,\cdots,\gamma_k,\cdots\}$ obtained by deleting the arcs which are not allowed in a lamination. If $I$ do not intersect with $\gamma$, then set $\gamma^{I}=\{\gamma$\}. Denote $L^I=\bigcup\limits_{\gamma\in L}\gamma^I$, $\mathcal L^{I}=\{L^I|\;L\in \mathcal L\}$.

\begin{figure}[h] \centering
  \includegraphics*[5,40][376,167]{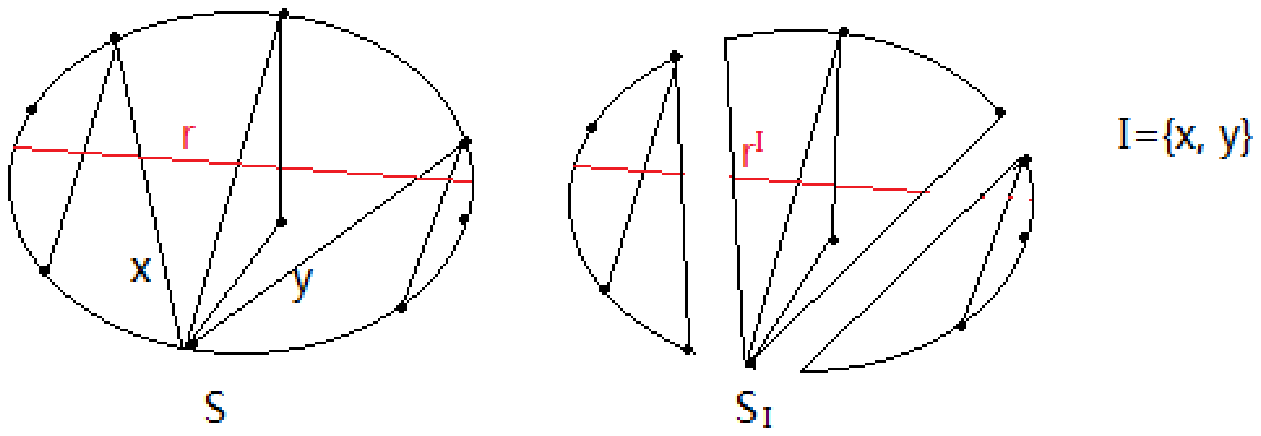}

 Figure 6
\end{figure}

Using the above lemma step by step, we have the following characterization on $\mathcal A(\Sigma(S,M,T,\mathcal L)_{I_0,I_1})$.

\begin{Thm}\label{sur}
 With the assumptions as above, let $(S,M)$ be a marked surface, $T$ be a tagged triangulation and $\mathcal L$ be a multi-lamination of $(S,M)$. Let $I_0$,$I_1$ be two subsets of initial cluster variables with $I_0\cap I_1=\emptyset$, then
$\mathcal{A} (\Sigma(S,M,\mathcal L,T)_{I_0, I_1})$ is a cluster algebra arising from the surface $(S_{(I_0\cup I_1)\cap X},M)$, where $X$ is the cluster in the seed $\Sigma(S,M,\mathcal L,T)$. More explicitly, denote $J=(I_0\cup I_1)\cap X$, then
 $$\mathcal{A} (\Sigma(S,M,\mathcal L,T)_{ I_0,I_1})=\mathcal{A} (\Sigma(S_{J},M,(\mathcal {L}\setminus I_1)^{J}\cup\{L_x\;|\;x\in I_0\}),T_{J}).$$
\end{Thm}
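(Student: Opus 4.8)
The plan is to prove the statement by induction on $|I_0|+|I_1|$, reducing the full mixing-type sub-seed to an iteration of the one-variable operations handled in Lemma \ref{delete}. First I would record the combinatorial fact that the mixing-type sub-seed $\Sigma_{I_0,I_1}$ of Definition \ref{mixingseed} is built from $\widetilde B$ purely by deleting the rows and columns indexed by the variables being frozen or removed, so the single-variable operations ``freeze $x$'' (passing to the $(\{x\},\emptyset)$-type sub-seed) and ``delete $x$'' (passing to the $(\emptyset,\{x\})$-type sub-seed) commute with one another and compose, in any order, to $\Sigma_{I_0,I_1}$. Thus for any $x\in I_0$ (resp.\ $x\in I_1$) we may realize $\Sigma(S,M,\mathcal L,T)_{I_0,I_1}$ as the corresponding one-variable sub-seed of $\Sigma(S,M,\mathcal L,T)_{I_0\setminus\{x\},I_1}$ (resp.\ of $\Sigma(S,M,\mathcal L,T)_{I_0,I_1\setminus\{x\}}$), which is the engine of the induction.

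For the inductive step I would apply Lemma \ref{delete} to the chosen variable, splitting into three cases according to the three clauses of that lemma. If $x\in I_1\cap X_{fr}$, clause (1)(a) simply removes the lamination $x$, replacing $\mathcal L$ by $\mathcal L\setminus\{x\}$ with the surface and triangulation unchanged. If $x\in I_1\cap X=I_1'$ is an exchangeable arc, clause (1)(b) cuts along $x$, passing to $(S_x,M,\mathcal L^{x},T_x)$. If $x\in I_0$, clause (2) cuts along $x$ and \emph{adds} the new lamination $L_x$, passing to $(S_x,M,\mathcal L^{x}\cup\{L_x\},T_x)$. In each case the induction hypothesis, applied on the smaller surface to the remaining index sets, yields a cluster algebra from a paunched surface, and it remains only to identify the accumulated data with $(S_J,M,(\mathcal L\setminus I_1)^{J}\cup\{L_x\mid x\in I_0\},T_J)$ for $J=I_0\cup I_1'$.

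The decisive point, and the one I expect to be the main obstacle, is the compatibility of the iterated one-arc constructions with the direct $J$-paunched constructions defined just before the theorem. Here I would use crucially that the arcs of $J$ all lie in the single triangulation $T$ and are therefore pairwise compatible, hence pairwise non-crossing; this guarantees that each arc $y\in J\setminus\{x\}$ survives the cut along $x$ as a genuine arc of $T_x$ (with the self-folded-triangle cases of Definition \ref{paunch} and the surrounding discussion treated separately), so that cutting along the arcs of $J$ one at a time is well-defined and order-independent and produces exactly $S_J$ and $T_J$. For the old laminations I would then check that dividing a curve $\gamma$ successively by the arcs of $J$ yields the same pieces as the simultaneous division $\gamma^{J}$, so that the iterated subdivision of $\mathcal L\setminus I_1$ equals $(\mathcal L\setminus I_1)^{J}$.

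Finally, and this is the subtlest piece of the bookkeeping, I would verify that each newly created lamination $L_x$ (for a frozen arc $x\in I_0$) is left intact by all subsequent cuts: since $L_x$ is constructed to run parallel to $x$ and the other arcs of $J$ are disjoint from $x$, the curve $L_x$ does not cross them, whence $L_x^{y}=\{L_x\}$ for every $y\in J$ cut afterward, and the accumulated set of added laminations is precisely $\{L_x\mid x\in I_0\}$ rather than a further-subdivided family. Collecting the three data streams, namely the surface $S_J$ with triangulation $T_J$, the subdivided old laminations $(\mathcal L\setminus I_1)^{J}$, and the intact new laminations $\{L_x\mid x\in I_0\}$, then gives the claimed equality of cluster algebras.
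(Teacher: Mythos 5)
Your proposal is correct and follows essentially the same route as the paper: the paper's entire argument for Theorem \ref{sur} is the single remark that one applies Lemma \ref{delete} ``step by step,'' which is exactly your induction on $|I_0|+|I_1|$. You additionally spell out the order-independence of the one-variable operations, the identification of the iterated cuts with $(S_J,M,T_J)$ and of the iterated lamination subdivisions with $(\mathcal L\setminus I_1)^J$, and the survival of the new laminations $L_x$ under later cuts --- checks the paper leaves implicit.
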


Following this theorem, we calle $(S_{J},M,(\mathcal {L}\setminus I_1)^{J}\cup\{L_x|x\in J\},T_{J})$ the  {\bf $(I_0,I_1)$-paunched surface} from the surface $(S,M,\mathcal L,T)$ for $J=(I_0\cup I_1)\cap X$.

Using of the above results, we now discuss the correspondence between sub-rooted cluster algebras and paunched surfaces.

For two surfaces $(S,M)$ and $(S',M')$, a {\bf homeomorphism} $f$ from $(S,M)$ to $(S',M')$ is a homeomorphism (or say, a topological isomorphism) $f:S\rightarrow S'$ which induces a bijection from $M$ to $M'$. As a homeomorphism, $f$ maps boundary points to boundary points and punctures to punctures. In this case we say $(S,M)$ and $(S',M')$ to be {\bf homeomorphic} under $f$. If $(S,M)$ and $(S',M')$ have respectively tagged triangulations $T$ and $T'$ and   multi-laminations $\mathcal L$ and $\mathcal L'$, we say $(S,M,T,\mathcal L)$ and $(S',M',T',\mathcal L')$ to be {\bf isomorphic} if there is a homeomorphic map $f$ from $(S,M)$ to $(S',M')$ which maps $T$ to $T'$ and induces bijections from $\mathcal L$ to $\mathcal L'$, denote as $(S,M,T,\mathcal L)\cong (S',M',T',\mathcal L')$.

Note that every tagged triangulation is ideal triangulation if the surface without punctures and using Theorem \ref{un}, we have:

\begin{Lem}(Proposition 14.1, \cite{fosth} )\label{isom}
Assume that $(S_1,M_1,\mathcal L_1,T_1)$ and $(S_2,M_2,\mathcal L_2,T_2)$ are two surfaces without punctures and their corresponding quivers $Q(T_1)$ and $Q(T_2)$ are connected.  Assume there is a bijection $\pi: \mathcal L_1\rightarrow \mathcal L_2$ such that $b_{\gamma L}=b'_{\varphi(\gamma)\pi(L)}$ (respectively, $b_{\gamma L}=-b'_{\varphi(\gamma)\pi(L)}$)  for any arc $\gamma$ in $T$ and lamination $L$ in $\mathcal L_1$.
 If $Q(T_1)\overset{\varphi}{\cong} Q(T_2)$ (respectively, $Q(T_1)\overset{\varphi}{\cong} Q(T_2)^{op}$),  then $(S_1,M_1,\mathcal L_1,T_1)\cong(S_2,M_2,\mathcal L_2,T_2)$.
\end{Lem}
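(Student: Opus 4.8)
The plan is to split the statement into two parts: first, to produce a homeomorphism $F\colon (S_1,M_1)\to (S_2,M_2)$ carrying the triangulation $T_1$ onto $T_2$ and inducing exactly $\varphi$ on the set of arcs (orientation-preservingly in the first case, orientation-reversingly in the second); and second, to check that this same $F$ transports $\mathcal L_1$ onto $\mathcal L_2$ via $\pi$. The second part is short and uses Theorem \ref{un}. Since $(S,M)$ has no punctures, every tagged triangulation is an ideal one, the shear coordinate $b_{\gamma,L}$ is a sum of local contributions at the crossings of $L$ with $\gamma$, and a homeomorphism preserves these contributions. Thus, once $F$ is built realizing $\varphi$ on arcs, for any $L\in\mathcal L_1$ the lamination $F(L)$ on $(S_2,M_2)$ satisfies $b'_{\varphi(\gamma),F(L)}=b_{\gamma,L}=b'_{\varphi(\gamma),\pi(L)}$ for every arc $\gamma\in T_1$, the last equality being the hypothesis. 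Since $\varphi$ is a bijection of vertices, $\{\varphi(\gamma):\gamma\in T_1\}=T_2$, so $F(L)$ and $\pi(L)$ have the same vector of shear coordinates with respect to $T_2$; by the bijectivity in Theorem \ref{un} a lamination is determined by that vector, whence $F(L)=\pi(L)$. Hence $F$ induces $\pi$ on laminations and is the required isomorphism $(S_1,M_1,\mathcal L_1,T_1)\cong(S_2,M_2,\mathcal L_2,T_2)$.

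So the real content is building $F$ from $\varphi$, i.e. recovering the triangulated surface from its quiver. I would treat the orientation-preserving case first and reduce the orientation-reversing case to it by passing to $(S_2,M_2)$ with the opposite orientation, whose quiver is $Q(T_2)^{\mathrm{op}}$ and under which every shear coordinate flips sign, matching the $-b'_{\varphi(\gamma)\pi(L)}$ in the hypothesis. Fix orientations so each $Q(T_i)$ carries its standard counterclockwise arrow convention inside every triangle. Because there are no punctures, $T_i$ has no self-folded triangle, so each triangle has three distinct sides, each an arc or a boundary segment; an interior triangle (three arc-sides) contributes an oriented $3$-cycle of $Q(T_i)$, while a triangle with one or two boundary sides contributes one arrow or none. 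The idea is to read off from $Q(T_1)$ the full list of triangles and their gluings: the oriented $3$-cycles give the interior triangles, and for each arc $\gamma$ the arrows incident to $\gamma$ record, on each of its two sides, the cyclic successor arc inside the adjacent triangle, with boundary segments appearing as the ``missing'' successors. A quiver isomorphism $\varphi$ respects $3$-cycles and this incidence data, so it induces a bijection between the triangles of $T_1$ and those of $T_2$ compatible with the side-identifications. I would then define $F$ triangle by triangle, mapping each closed triangle of $T_1$ homeomorphically onto the corresponding triangle of $T_2$ matching vertices, arc-sides and boundary-sides according to $\varphi$, verify that the partial maps agree along shared arcs and so glue to a global homeomorphism sending $M_1$ to $M_2$, and use the connectedness of $Q(T_i)$ to ensure the triangulated surfaces are connected so no ambiguity arises between components.

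The hard part will be the faithful reconstruction of the gluing data from the quiver, precisely because boundary segments are invisible in $Q(T_i)$, corresponding to no vertex. One must argue that, knowing in advance that the quiver comes from a connected unpunctured surface, the cyclic arrangement of arcs and boundary segments around each triangle, and hence the entire gluing pattern, is nonetheless determined unambiguously; this is the substance of Proposition 14.1 of \cite{fosth}. In carrying this out I would pay particular attention to the small or highly symmetric configurations in which a quiver isomorphism might a priori fail to lift to a homeomorphism, and check that the connectedness hypothesis together with the explicit incidence bookkeeping rules them out. Granting this reconstruction, the assembly of $F$ and the lamination check above complete the proof.
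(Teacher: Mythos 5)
Your proposal is correct and is exactly the argument this statement rests on: the paper gives no independent proof of Lemma \ref{isom}, importing the reconstruction of a triangulated unpunctured surface from its quiver as Proposition 14.1 of \cite{fosth} and combining it with Theorem \ref{un}, whose bijectivity forces the transported lamination to coincide with $\pi(L)$. Your two-step plan --- build a homeomorphism $F$ realizing $\varphi$ (reducing the $Q(T_2)^{\mathrm{op}}$ case by reversing the orientation of $(S_2,M_2)$, which simultaneously flips the shear coordinates, matching the sign in the hypothesis), then conclude $F(L)=\pi(L)$ from equality of shear-coordinate vectors --- reproduces precisely that combination, and your deferral of the delicate gluing/boundary-segment reconstruction to Proposition 14.1 of \cite{fosth} is exactly the paper's own move.
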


This lemma tells us that corresponding quiver is a new algebraic invariant of the oriented surface with tagged triangulation in the case without punctures. However, it is negative in the case with punctures in general.

For example, we have two cluster algebras  $\mathcal A_1=\mathcal A(\Sigma(S_1,M_1,\mathcal L_1,T_1))$ and  $\mathcal A_2=\mathcal A(\Sigma(S_2,M_2,\mathcal L_2,T_2))$ where the surface $(S_1,M_1)$ is the disk with $6$ marked points on the boundary with $\mathcal L_1=\emptyset$ and the surface $(S_2,M_2)$ is the disk with $3$ marked points on the boundary and $1$ puncture with $\mathcal L_2=\emptyset$, see Figure 7. The corresponding quivers, denote $Q_i=Q(T_i)$ for $i=1,2$, are both of type $A_3$. Thus, $\mathcal A_1=\mathcal A(Q_1)\cong \mathcal A_2=\mathcal A(Q_2)$ as rooted cluster algebras.
But, $Q_1$ and $Q_2$ are mutation equivalent. So, there is a triangulation $T_2'$ of $(S_2,M_2)$ such that its corresponding quiver $Q(T_2')$ equals to $Q_1$, i.e. $Q(T_1)=Q(T_2')$. However, $(S_1,M_1,\emptyset,T_1)\not\cong(S_2,M_2,\emptyset,T_2')$ as surfaces with punctures due to $|M_1|\not=|M_2|$.

\begin{figure}[h] \centering
  \includegraphics*[15,42][254,153]{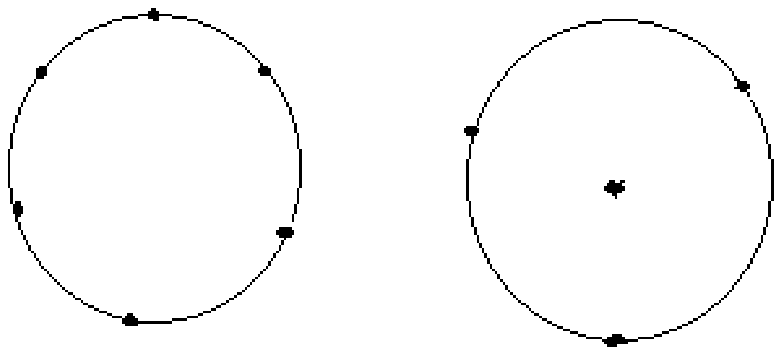}


 Figure 7
\end{figure}

\begin{Rem}
In the case of surfaces with punctures, we refer the readers to \cite{g} for the analogue discussion of Lemma \ref{isom}.
\end{Rem}

From this lemma and combining Theorem \ref{number} and \ref{sur}, we can give the proof of Theorem \ref{final} as follows.

{\bf Proof of Theorem \ref{final}:}

(a)\; It is enough to prove two $(I_0,I_1)$-paunched surfaces of $(S,M,T,\mathcal L)$ are isomorphic if and only if their corresponding rooted $(I_0,I_1)$ sub-cluster algebras of $\mathcal A(\Sigma(S,M,T,\mathcal L))$ are isomorphic.

First,  as $(I_0,I_1)$-paunched surfaces from $(S,M,T,\mathcal L)$, assume $(S_{J},M,T_{J},\mathcal L_1)\overset{\varphi}{\cong} (S_{J'},M,T_{J'},\mathcal L_2),$ where $J=(I_0\cup I_1)\cap X$,  $J'=(I'_0\cup I'_1)\cap X$ and  $\mathcal L_1=(\mathcal {L}\setminus I_1)^{J}\cup\{L_x|x\in J\}$, $\mathcal L_2=(\mathcal {L}\setminus I'_1)^{J'}\cup\{L_x|x\in J'\}$.  Without loss of generality, assume that the  surfaces $S_J$ and $S_{J'}$ are connected respectively,  then the quivers $Q(T_J)$ and $Q(T_{J'})$ are connected respectively, too.

 Under the bijection $\varphi|_{T_1}: T_1\rightarrow T_{2}$, when $\varphi$ preserves the orientation of the surfaces, we have $Q(T_J)\cong Q(T_{J'})$; when $\varphi$ inverses the orientation of the surfaces, we have $Q(T_J)\cong Q(T_{J'})^{op}$. And under the bijection $\varphi|_{\mathcal L_1}: \mathcal L_1\rightarrow  \mathcal {L}_2$, we obtain a one-by-one correspondence from the intersections of laminations in $\mathcal L_1$ with arcs in $T_1$ to the intersections of laminations in $\mathcal L_2$ with arcs in $T_2$. Thus, according to the definition, we have $b_{\gamma L}=b'_{\varphi(\gamma) \varphi(L)}$ if $\varphi$ preserves the orientation, and $b_{\gamma L}=-b'_{\varphi(\gamma) \varphi(L)}$ if $\varphi$ inverses the orientation. In both cases of $\varphi$,  we get $\Sigma(S_{J},M,T_{J},\mathcal L_1)\overset{\varphi}{\cong} \Sigma(S_{J'},M,T_{J'},\mathcal L_2)$ by Definition \ref{seediso}, it follows
$\mathcal A(\Sigma(S,M,T,\mathcal L))_{I_0,I_1}\cong\mathcal A(\Sigma(S,M,T,\mathcal L))_{I'_0,I'_1}$.

Conversely, if $\mathcal A(\Sigma(S,M,T,\mathcal L))_{I_0,I_1}\cong\mathcal A(\Sigma(S,M,T,\mathcal L))_{I'_0,I'_1}$,  then by Proposition \ref{basiclem} and Theorem \ref{sur}, we have
\begin{equation}\label{seediso}
\Sigma(S_{J},M,T_{J},\mathcal L_1)\cong \Sigma(S_{J'},M,T_{J'},\mathcal L_2),
 \end{equation}
 where $J=(I_0\cup I_1)\cap X$, $J'=(I'_0\cup I'_1)\cap X$ and $\mathcal L_1=(\mathcal {L}\setminus I_1)^{J}\cup\{L_x|x\in J\}$, $\mathcal L_2=(\mathcal {L}\setminus I'_1)^{J'}\cup\{L_x|x\in J'\}$.

We may assume that the seeds in (\ref{seediso}) are connected. Then it is obtained that $Q(T_J)\cong Q(T_{J'})$ or $Q(T_J)\cong Q(T_{J'})^{op}$, and the numbers of their frozen variables are equal, which means a bijection between $\mathcal L_1$ and $\mathcal L_2$. Since there is no puncture in $(S,M)$, there are no punctures in $(S_J,M)$ and $(S_{J'},M)$. Hence we see that the conditions of Lemma \ref{isom} are satisfied for the seeds $\Sigma(S_{J},M,T_{J},\mathcal L_1)$ and $\Sigma(S_{J'},M,T_{J'},\mathcal L_2)$. Thus, $(S_{J},M,T_{J},\mathcal L_1)\cong (S_{J'},M,T_{J'},\mathcal L_2)$.

(b)\; It follows immediately from Theorem \ref{number} and (a).  \;\;\;\;\;\;\;\;\;\;\;\;\;\;\;\;\;\;\;\;\;\;\;\;\;\;\;\;\;\;\;\;\;\;\;\;\;\;\;\;\;\;\;\;\;\;\;\;\;\;\;\;\;\;\;\;\;\;\;\;\;\;\;\;\;\;\;$\Box$
\\

With the assumption as in Theorem \ref{final}, but the number of punctures in $(S,M,T,\mathcal L)$ is non-zero, one can discuss the analogue version of Theorem \ref{final}.

{\bf Acknowledgements:}\; This project is supported by the National
Natural Science Foundation of China (No.11271318,  No.11571173) and the
Zhejiang Provincial Natural Science Foundation of China
(No.LZ13A010001).

 The authors warmly thank the referee for
many helpful comments and suggestions in improving the quality and readability of this paper.

\end{document}